\newtheorem{thm}{Theorem}[section]
\newtheorem{lem}[thm]{Lemma}
\newtheorem{cor}[thm]{Corollary}
\newtheorem{prop}[thm]{Proposition}
\newtheorem{rem}[thm]{Remark}
\numberwithin{equation}{section}
\newcommand{\R}{\mathbb{R}}
\newcommand{\C}{\mathbb{C}}
\newcommand{\ml}{\mathcal{L}}
\newcommand{\ve}{\varepsilon}
\newcommand{\rd}{\mathrm{d}}
\newcommand{\dom}{\mathrm{dom}}
\newcommand{\Gk}{\mathcal{G}_\kappa}
\newcommand{\Tk}{\mathcal{T}}
\newcommand{\Y}{\mathsf{X}}
\newcommand{\nsa}{n^*}
\newcommand{\nsap}{n_+^*}
\newcommand{\nsam}{n_-^*}
\newcommand{\nsapm}{n_\pm^*}
\newcommand{\nsb}{n_*}
\newcommand{\nsbp}{n_*^+}
\newcommand{\nsbm}{n_*^-}
\newcommand{\nsbpm}{n_*^\pm}
\newcommand{\dhr}{\mathrel{\lhook\joinrel\relbar\kern-.8ex\joinrel\lhook\joinrel\rightarrow}}
\begin{document}

\title[Singular or Degenerate Operators in Weighted $L_1$-spaces]{Analytic Semigroups  in Weighted $L_1$-Spaces on the Half-Line Generated by Singular or Degenerate Operators}

\author{Patrick Guidotti}
\address{Department of Mathematics\\ University of California at Irvine\\ 340 Rowland Hall\\ Irvine, CA 92697-3875, USA}
\email{pguidott@uci.edu}
\author{Philippe Lauren\c{c}ot}
\address{Laboratoire de Math\'ematiques (LAMA) UMR~5127, Universit\'e Savoie Mont Blanc, CNRS\\	F--73000 Chamb\'ery, France}
\email{philippe.laurencot@univ-smb.fr}
\author{Christoph Walker}
\address{Leibniz Universit\"at Hannover\\ Institut f\" ur Angewandte Mathematik \\ Welfengarten 1 \\ D--30167 Hannover\\ Germany}
\email{walker@ifam.uni-hannover.de}
\date{\today}

\begin{abstract}
Ranges of the real-valued parameters $\alpha$, $a$, $b$, and $m$ are identified for which the operator 
$$
\mathcal{A}_{\alpha}(a,b)f(x):=x^\alpha\left(f''(x)+\frac{a}{x}f'(x)+\frac{b}{x^2}f(x)\right), \quad x>0,
$$ 
generates an analytic semigroup in $L_1((0,\infty),x^m\mathrm{d}x)$.
\end{abstract}
%
\keywords{analytic semigroup, singular elliptic operator, degenerate elliptic operator, representation formula}
\subjclass[2020]{47D06 35J70 35J75}
\maketitle

\section{Introduction}\label{S1}

Let $(a,b,\alpha)\in \R^3$ and consider the second-order differential operator
\begin{equation}
	\mathcal{A}_\alpha(a,b) f(x):= x^\alpha\left(f''(x)+\frac{a}{x}f'(x)+\frac{b}{x^2}f(x)\right),\quad x>0, \label{Aaab}
\end{equation}
defined on the half-line~$(0,\infty)$ for $f\in \mathcal{D}'((0,\infty))$. We observe that $\mathcal{A}_\alpha(a,b)$ is degenerate for $\alpha>2$, while it is singular for $\alpha<2$. Using the weighted spaces
\begin{equation*}
    L_{p,m} := L_p((0,\infty),x^m\mathrm{d}x), \quad p\in [1,\infty], \quad m\in\mathbb{R},
\end{equation*}
\cite{MNS_JEE18, MNS_JDE22} discuss a range of the parameters $(\alpha,a,b)\in\mathbb{R}^3$, $p\in (1,\infty)$, and $m\in\mathbb{R}$ for which the (unbounded) linear operator $\big( \mathbb{B}_{p,m},\dom(\mathbb{B}_{p,m}) \big)$ defined by
\begin{equation*}
    \mathbb{B}_{p,m}f := \mathcal{A}_\alpha(a,b) f, \quad f\in \dom(\mathbb{B}_{p,m}),
\end{equation*}
with a certain domain $\dom(\mathbb{B}_{p,m})$ (which they identify) generates a bounded and positive analytic semigroup on $L_{p,m}$. Their analysis does not include the extreme cases $p=1$ and $p=\infty$ and the main purpose of this work is to obtain generation properties of $\mathcal{A}_{\alpha}(a,b)$ for the case $p=1$. This is not straightforward as can be inferred from \cite{Ama1983, Paz1983, Yag2010}, for instance. A different reason for the investigation of the case $p=1$ is the study of the well-posedness of diffusion-growth-fragmentation models of the form
\begin{equation*}
    \partial_t u = \partial_x^2(Du) + \partial_x(Gu) + \mathcal{F}[u], \quad (t,x)\in (0,\infty)^2,
\end{equation*}
where the diffusion coefficient $D\ge 0$ and the growth rate $G$ depend on the cluster size $x>0$, and $\mathcal{F}[f]$ is a fragmentation operator. See \cite{BaAr2006, BWWvB2004, FJMOS2003, Tch2024} and the references therein for more information about these models. Of particular importance in this context is the time evolution of the total mass of the population, which is nothing but the first moment of $u$ with respect to $x$. Weighted $L_1$-spaces are thus the natural functional framework for the study of well-posedness. See \cite{BaAr2006, BLL2020a} and the references they cite. The fragmentation operator does not involve derivatives and can be viewed as a lower order perturbation of the diffusion-growth operator. It is therefore possible to take the classical approach consisting in first establishing generation properties of the latter followed by an application of perturbation theory results in order to handle the full operator. This strategy was successfully implemented in \cite{LaWa2022} for the fragmentation equation with size diffusion. 

Returning to the operator $\mathcal{A}_\alpha(a,b)$, an established way to derive generation results in $L_1$-spaces relies on generation results in spaces of continuous functions combined with duality arguments. See, for instance, \cite{Ama1983, Paz1983, Yag2010}. The approach chosen here is different and similar to that in \cite{MNS_JEE18, MNS_JDE22}. It consists of two steps: in the first, the operator $\mathcal{A}_\alpha(a,b)$ is reduced by isometric transformations to the Bessel operator $\Gk$ defined by
\begin{equation}
    \Gk f(x):=x^{-\kappa}\Big(x^\kappa f'(x)\Big)'=f''(x)+\frac{\kappa}{x}f'(x),\quad x>0,\quad f\in \mathcal{D}'\big((0,\infty)\big), \label{gk}
\end{equation}
where $\kappa\in\left\{ 1 - \frac{2\sqrt{D}}{2-\alpha}, 1 + \frac{2\sqrt{D}}{2-\alpha} \right\}$, $\alpha\ne 2$, and $D:= (a-1)^2-4b\ge 0$. In the second step, the generation properties of $\Gk$ are studied in weighted $L_1$-spaces; this is done via a representation formula for the semigroup and the definition of the appropriate domain. 

We begin with generation results for $\Gk$ and, in order to simplify the notation, we set
\begin{equation*}
    X_m := L_{1,m} =L_1\big((0,\infty),x^m\mathrm{d}x\big),\quad m\in\mathbb{R},
\end{equation*} 
and
\begin{equation*}
	\Y_{I} := \bigcap_{r\in I} X_r,
\end{equation*}
for a non-empty interval $I\subset\mathbb{R}$.

\begin{thm}\label{T7}
	Let $\kappa<1$ and $\kappa-2<m\le 1$. Define the operator 
	\begin{equation*}
		A_{1,m}f := \Gk f, \qquad f\in \dom(A_{1,m}),
	\end{equation*} 
	where $\dom(A_{1,m})$ depends on the parameters $m$ and $\kappa$:
	\begin{itemize}
		\item [(c1)] if $\kappa-2<m\le \kappa$, then
		\begin{equation*}
			\dom(A_{1,m}) = \big\{ f \in \Y_{(\kappa-2,m]}\, \big |\, \Gk f\in X_m \big\} \,;
		\end{equation*}
		\item [(c2)] if $\kappa<m<1$, then
		\begin{equation*}
			\dom(A_{1,m}) = \big\{ f \in \Y_{[m-2,m]}\,\big | \, \Gk f\in X_m \big\}\,;
		\end{equation*}
		\item [(c3)] if $m=1$, then
		\begin{equation*}
			\dom(A_{1,1}) = \big\{ f \in \Y_{(-1,1]}\, \big |\, \Gk f\in X_1 
   \text{ and }\displaystyle{\lim_{x\to 0}f(x)=0}\big\}.
		\end{equation*}
	\end{itemize}
	Then $A_{1,m}$ generates a bounded and positive analytic semigroup $S_{1,m}$ on $X_m$ of angle $\pi/2$. In addition, $S_{1,m}$ is a semigroup of contractions on $X_m$ when $m\in (\kappa,1]$.
\end{thm}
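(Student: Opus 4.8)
The plan is to construct the semigroup explicitly via a representation formula and then read off its generator. Since $\kappa<1$, the substitution $f(x)=x^{(1-\kappa)/2}g(x)$ conjugates $\Gk$ to a Bessel-type operator of order $\nu:=\frac{1-\kappa}{2}>0$, and the associated (Dirichlet-type) heat kernel is
\[
p_t(x,y):=\frac{1}{2t}\,(xy)^{(1-\kappa)/2}\,I_\nu\!\left(\frac{xy}{2t}\right)e^{-(x^2+y^2)/(4t)},\qquad t,x,y>0 .
\]
I would \emph{define} the candidate semigroup by $\big(S_{1,m}(t)f\big)(x):=\int_0^\infty p_t(x,y)f(y)\,y^\kappa\,\mathrm dy$. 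Positivity is immediate from $I_\nu>0$ on $(0,\infty)$ (recall $\nu>0$), the semigroup law is the Chapman--Kolmogorov identity for Bessel kernels, and strong continuity on $X_m$ follows on a dense subset (e.g.\ $C_c^\infty((0,\infty))$) by dominated convergence, once the uniform boundedness in $\mathcal L(X_m)$ of the next step is known.

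The central estimate is the weighted mapping bound. By positivity of $p_t$ and Tonelli,
\[
\|S_{1,m}(t)f\|_{X_m}\le\int_0^\infty|f(y)|\,y^\kappa\,v_m(t,y)\,\mathrm dy,\qquad v_m(t,y):=\int_0^\infty p_t(x,y)\,x^{m}\,\mathrm dx ,
\]
and, by symmetry of $p_t$, the function $v_m(t,\cdot)$ is exactly the solution of $\partial_t v=\Gk v$ with datum $y\mapsto y^{m-\kappa}$ produced by the same kernel. Since $\Gk y^{m-\kappa}=(m-\kappa)(m-1)\,y^{m-\kappa-2}$, the function $y^{m-\kappa}$ is a supersolution precisely when $m\in(\kappa,1]$, so the comparison principle gives $v_m(t,y)\le y^{m-\kappa}$ and hence contractivity on $X_m$ in that range; for $m\in(\kappa-2,\kappa]$ I would instead use the self-similarity $v_m(t,y)=t^{(m-\kappa)/2}v_m(1,y/\sqrt t\,)$ together with the small- and large-argument behaviour of $I_\nu$ to obtain $v_m(1,\eta)\le C\eta^{m-\kappa}$ for all $\eta>0$. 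This is where the two-sided restriction on $m$ originates: $m>\kappa-2$ is exactly what makes $\int_0 x^{1-\kappa+m}e^{-x^2/4}\,\mathrm dx$ converge, so that $v_m(1,\eta)\sim c\,\eta^{1-\kappa}$ as $\eta\to0$, and $m\le1$ is what keeps $\eta^{1-\kappa}\lesssim\eta^{m-\kappa}$ as $\eta\to0$; the large-$\eta$ regime is governed by the concentration of $p_1(\cdot,\eta)$ near $x=\eta$ and gives $v_m(1,\eta)\sim c'\,\eta^{m-\kappa}$. In particular $S_{1,m}$ is bounded (indeed contractive when $m\in(\kappa,1]$) on $X_m$.

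For analyticity of angle $\pi/2$ I would extend $t\mapsto z$ over the open right half-plane: $p_z$ is holomorphic there (the combination $(xy)^{(1-\kappa)/2}I_\nu(xy/2z)$ equals $z^{-\nu}$ times an entire function of $(xy)^2/z^2$), and since $\mathrm{Re}(1/z)=\cos(\arg z)/|z|$ the previous estimates reproduce themselves with $\cos(\arg z)$ in place of $1$; this yields $\|S_{1,m}(z)\|_{\mathcal L(X_m)}\le C_\theta$ uniformly on each subsector $|\arg z|\le\theta$, for every $\theta<\pi/2$, which is the assertion.

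It remains to identify the generator $B$ of $S_{1,m}$. Differentiating under the integral sign gives $Bf=\Gk f$ in $\mathcal D'((0,\infty))$, so $\dom(B)\subseteq\{f:\Gk f\in X_m\}$; one then pins down the remaining constraints via the resolvent. For $\lambda>0$, $(\lambda-B)^{-1}g$ is the integral of $g$ against the Green's function $G_\lambda$ of $\lambda-\Gk$, built from the solution $x^{(1-\kappa)/2}K_\nu(\sqrt\lambda x)$, which decays exponentially at $\infty$, and the solution $x^{(1-\kappa)/2}I_\nu(\sqrt\lambda x)\sim x^{1-\kappa}$, which vanishes at $0$; the convergence of that integral for $g\in X_m$ uses precisely $m\le1$. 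Sharp two-sided bounds on $G_\lambda$ then show that $(\lambda-B)^{-1}$ maps $X_m$ onto $\big\{f\in\Y_{(\kappa-2,m]}:\Gk f\in X_m\big\}$ in case (c1), onto $\big\{f\in\Y_{[m-2,m]}:\Gk f\in X_m\big\}$ in case (c2), and onto $\big\{f\in\Y_{(-1,1]}:\Gk f\in X_1,\ \lim_{x\to0}f=0\big\}$ in case (c3). Here one uses that $K_\nu(\sqrt\lambda\,\cdot\,)$ has a nonzero constant term at the origin, its leading coefficient being $\tfrac12\Gamma(\nu)(\tfrac{\sqrt\lambda}{2})^{-\nu}\ne0$: this is why, in cases (c1)--(c2), membership in $\Y_I$ already forces $\lim_{x\to0}f=0$ --- because $\kappa-2<-1$, respectively $m-2<-1$, rules out a constant at the origin --- whereas for $m=1$ the interval $(-1,1]$ does not, so the condition must be imposed explicitly; and it is also what makes $\lambda-\Gk$ injective on the stated domain, so that $\dom(B)=\dom(A_{1,m})$ and $B=A_{1,m}$. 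I expect this last step --- extracting from $G_\lambda$ the exact intersection space and the borderline Dirichlet condition, and in particular handling the critical case $m=1$ --- to be the main difficulty; positivity, the semigroup law, and the contraction/boundedness bounds are comparatively routine once the right kernel is in hand.
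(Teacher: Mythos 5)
Your overall architecture coincides with the paper's: the candidate semigroup is defined by the very same modified-Bessel kernel (your $p_t(x,y)y^\kappa$ is the kernel $k_\kappa$ of~\eqref{m1}), boundedness on $X_m$ is reduced to a weighted bound on the kernel mass, analyticity of angle $\pi/2$ comes from the holomorphic extension of the kernel to the right half-plane, and the domain is identified through the resolvent written as an integral against the Green's function built from $x^\nu I_\nu(\sqrt\lambda x)$ and $x^\nu K_\nu(\sqrt\lambda x)$ (cf.\ \Cref{P11}, \Cref{L100}, \Cref{L101}, \Cref{P102}). Your diagnosis of where the constraints come from is accurate: $m>\kappa-2$ for convergence of the kernel mass at the origin, $m\le 1$ for convergence of the Green's-function integral at the origin, and the dichotomy between (c1)--(c2) (where $\Y_I$ contains an exponent $\le-1$ and so already excludes the $K_\nu$-branch, which is constant at $0$) and (c3) (where the Dirichlet condition must be imposed explicitly). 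Two of your sub-arguments are genuinely different from the paper's and are worth comparing. First, for boundedness you exploit the exact self-similarity $v_m(t,y)=t^{(m-\kappa)/2}v_m(1,y/\sqrt t)$ together with the asymptotics of $I_\nu$; the paper instead uses the pointwise Gaussian-type bound~\eqref{k11} on $k_\kappa$ and a general weighted estimate (\Cref{P11x}), which has the advantage of carrying over verbatim to complex $z$ and to the $z$-derivative of the kernel, which is what the analyticity proof actually needs. Second, your strong-continuity argument ``by dominated convergence on a dense subset'' hides the real work: for compactly supported $f$ one still has to show that the mass of $S(t)f$ near $0$ and near $\infty$ vanishes as $t\to0$ \emph{in the $X_m$-norm}, and the paper devotes step~(ii) of \Cref{P11} to exactly this, importing the interior convergence from the known $L_{p,m}$ theory of \Cref{T0}.

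The one place where your route has a genuine gap is the contraction property for $m\in(\kappa,1]$. Contractivity is indeed equivalent to the kernel inequality $v_m(t,y)\le y^{m-\kappa}$, and $y^{m-\kappa}$ is a stationary supersolution of $\partial_t v=\Gk v$ precisely in that range, but invoking ``the comparison principle'' here is not routine: the comparison function $y^{m-\kappa}$ is not in $X_m$, the operator is singular at $y=0$, and a maximum principle on $(0,\infty)$ for unbounded solutions requires Phragm\'en--Lindel\"of-type control at both ends of the half-line (equivalently, one must justify that the kernel representation of $S(t)$ applied to the non-integrable datum $y^{m-\kappa}$ solves the equation and attains its datum in a suitable sense). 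The authors explicitly remark before \Cref{L2} that contractivity ``does not seem to follow directly'' from the kernel representation, and they circumvent the issue entirely by proving dissipativity of $A_{1,m}$ on its (already identified) domain via a Kato-type inequality with the regularized sign function $\beta_\ve$, integration by parts, and the boundary limits \eqref{f1x}--\eqref{f2x}, and then applying Lumer--Phillips (\Cref{P1}). Note that their argument yields the extra quantitative term $(1-m)(m-\kappa)\|f\|_{X_{m-2}}$ in~\eqref{diss}, which is reused later for the absorption semigroup; your comparison-principle route, even if completed, would not produce this. If you want to keep your approach, you must either verify $\int_0^\infty k_\kappa(t,x,y)\,x^m\,\rd x\le y^m$ by a direct computation with the Bessel integral, or supply the Phragm\'en--Lindel\"of argument; otherwise the dissipativity route is the safer one.
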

	
The semigroup $S_{1,m}$ admits an explicit representation formula $S_{1,m}=\mathcal{S}|_{X_m}$ in terms of $\mathcal{S}$ defined in~\eqref{M1} below. The domain $\dom(A_{1,1})$ explicitly features a boundary condition, while such a condition is only implicitly present in $\dom(A_{1,m})$ for $m\in (\kappa,1)$. Indeed, if $m\in(\kappa,1)$ and $f\in D(A_{1,m})$, then it is a consequence of \Cref{C2} that
\begin{equation*}
	\lim_{x\to 0} x^{m-1} f(x) = 0.
\end{equation*}

In order to prove \Cref{T7}, we make use of the representation formula for the semigroup $S_{p,m}$ generated by $\Gk$ in $L_{p,m}$ in combination with the definition of a suitable domain that was derived in \cite{MNS_JDE22} for $p\in (1,\infty)$, $\kappa<1$, and $\kappa-1<\frac{m+1}{p}<2$. See~\eqref{M1} and \Cref{T0} below. $S_{p,m}$ coincides with the restriction of the integral operator $\mathcal{S}$ defined in~\eqref{M1} to the space $L_{p,m}$. The restriction of this integral operator to $X_m=L_{1,m}$ is thus a natural candidate for the semigroup generated by $\Gk$ on $X_m$. Using the representation formula, we prove that 
$$
S_{1,m}(z):=\mathcal{S}(z)|_{L_{1,m}}\in\mathcal{L}(X_m,X_{m-\theta}),\quad z\in \dot{\Sigma}_{\pi/2-\varepsilon},
$$
where $\varepsilon\in (0,\pi/2)$ and $\theta\in [0,m+2-\kappa)$ are arbitrary, provided $\kappa<1$ and $m\in (\kappa-2,1]$. Here and below, for $\alpha\in (0,\pi/2]$, we set 
\begin{equation*}
    \Sigma_{\alpha}:= \big\{ z\in\mathbb{C}\big |\,|\arg{z}|< \alpha \big\} \cup \{0\}, \quad \dot{\Sigma}_{\alpha}:= \Sigma_{\alpha}\setminus \{0\}.
\end{equation*}
We also prove that 
$$ 
S_{1,m}:\dot{\Sigma}_{\pi/2-\varepsilon}\to \mathcal{L}(X_m),\quad z\mapsto S_{1,m}(z)
$$
is continuous and, in fact, analytic. Thus, $S_{1,m}$ is an analytic semigroup on $X_m=L_{1,m}$ of angle $\pi/2$. The identification of its generator $A_{1,m}$ in $X_{m}$ requires two steps. In the first, a formula for its resolvent is obtained building on the results established in \cite{MNS_JDE22}. This leads to a complete characterization of the domain of the generator. It is necessary to handle the three cases $m\in (\kappa-2,\kappa]$, $m\in (\kappa,1)$, and $m=1$, separately. This results in slightly different definitions of the domain depending on the range of $m$. In the second step, we deal with the case when $m\in (\kappa,1]$. We show the dissipativity of the operator $\lambda - A_{1,m}$  in $X_m$ for $\lambda>0$. The contractivity of the semigroup $S_{1,m}$ is then a consequence of the Lumer-Phillips theorem. We point out that, in contrast to the analysis performed in \cite{MNS_JDE22}, \Cref{T7} includes the borderline case when $m=1$. In \cite{MNS_JDE22}, where $p\in (1,\infty)$, the borderline case $m=2p-1$ is excluded.

 As in \cite{MNS_JEE18, MNS_JDE22}, generation results for the general operator $\mathcal{A}_\alpha(a,b)$ can be derived from \Cref{T7}. We first deal with the singular case $\alpha<2$.
	
\begin{thm}\label{T5}
	Let $\alpha<2$ and take $(a,b)\in\R^2$ with $D:=(a-1)^2-4b\,\ge\,0$. Let $n\in (\nsb,\nsbp]$, where
	$$
	\nsb :=\frac{a-3-\sqrt{D}}{2}, \qquad \nsbpm:=\frac{1-2\alpha+a \pm \sqrt{D}}{2}.
	$$
	Define
	$$
	\mathbb{A}_n f:=\mathcal{A}_\alpha(a,b)f,\quad f\in \dom(\mathbb{A}_n),
	$$
	where $\dom(\mathbb{A}_n)$ depends on the parameters $\alpha$, $a$, $b$, and $n$ as follows:
	\begin{itemize}
		\item [(c1)] if $\nsb <n\le \nsbm$, then
		\begin{equation*}
			\dom(\mathbb{A}_n) = \big\{ f \in \Y_{(\nsb,n]}\, |\, \mathcal{A}_\alpha(a,b) f\in X_n \big\};
		\end{equation*}
		\item [(c2)] if $D>0$ and $\nsbm<n<\nsbp$, then
		\begin{equation*}
			\dom(\mathbb{A}_n) = \big\{ f \in \Y_{[n-2+\alpha,n]}\, |\, \mathcal{A}_\alpha(a,b) f\in X_n \big\};
		\end{equation*}
		\item [(c3)] if $n=\nsbp$, then
		\begin{equation*}
		\dom\big(\mathbb{A}_{\nsbp}\big) = \Big\{ f \in \Y_{(\nsbp-2+\alpha,\nsbp]}\, \Big |\,
		\mathcal{A}_\alpha(a,b) f\in X_{n^*_+},\:\displaystyle{\lim_{x\to 0} x^{(a+\sqrt{D}-1)/2}f(x) = 0}\Big\}.
		\end{equation*}
	\end{itemize}
	Then $\mathbb{A}_n$ generates a bounded and positive analytic semigroup on $X_n$ of angle $\pi/2$ which is a semigroup of contractions when $D>0$ and $n\in\left(\nsbm,\nsbp\right]$.
\end{thm}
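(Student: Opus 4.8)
The plan is to deduce \Cref{T5} from \Cref{T7} by reducing $\mathcal{A}_\alpha(a,b)$ to a Bessel operator $\Gk$ via two explicit isometric transformations, as announced in the introduction. Since $\alpha<2$, put $r:=2/(2-\alpha)>0$ and substitute $x=y^r$: a direct (if slightly tedious) computation gives, for $g(y):=f(y^r)$,
\[
\big(\mathcal{A}_\alpha(a,b)f\big)(y^r)=\frac{1}{r^2}\left(g''(y)+\frac{\tilde a}{y}\,g'(y)+\frac{\tilde b}{y^2}\,g(y)\right),\qquad \tilde a:=\frac{2a-\alpha}{2-\alpha},\quad \tilde b:=\frac{4b}{(2-\alpha)^2},
\]
which removes the degeneracy/singularity and satisfies $\tilde D:=(\tilde a-1)^2-4\tilde b=4D/(2-\alpha)^2\ge 0$. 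Writing next $g(y)=y^{\tilde s}u(y)$ with $\tilde s:=(1-\tilde a-\sqrt{\tilde D})/2=(1-a-\sqrt D)/(2-\alpha)$ (a root of $\rho\mapsto \rho^2+(\tilde a-1)\rho+\tilde b$) annihilates the zeroth-order coefficient and turns the bracketed operator into $y^{\tilde s}\Gk u$ with $\kappa:=2\tilde s+\tilde a=1-\sqrt{\tilde D}=1-\frac{2\sqrt D}{2-\alpha}\le 1$. In terms of the transformation $Tf:=\big(y\mapsto y^{-\tilde s}f(y^r)\big)$ this is the operator identity $\mathcal{A}_\alpha(a,b)=r^{-2}T^{-1}\Gk T$.

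The next step is to check the mapping properties of $T$. A change of variables in the defining integral shows that $T$ is, up to the positive factor $r^{-1}$, an isometric isomorphism of $X_n$ onto $X_m$ with $m=r(n+1)+\tilde s-1=(2n+1-a+\alpha-\sqrt D)/(2-\alpha)$, an increasing affine bijection $n\mapsto m$ that sends $\nsb$, $\nsbm$, $\nsbp$ to $\kappa-2$, $\kappa$, $1$, respectively; hence the hypothesis $n\in(\nsb,\nsbp]$ becomes $\kappa-2<m\le 1$ and the three subcases (c1)--(c3) of \Cref{T5} correspond exactly to (c1)--(c3) of \Cref{T7}. Being the composition of a power change of variable with multiplication by a power of the variable, $T$ and $T^{-1}$ are positivity-preserving and map the scale $\{X_\rho\}_\rho$ affinely onto itself; consequently they carry $\Y_{(\kappa-2,m]}$, $\Y_{[m-2,m]}$, $\Y_{(-1,1]}$ onto $\Y_{(\nsb,n]}$, $\Y_{[n-2+\alpha,n]}$, $\Y_{(\nsbp-2+\alpha,\nsbp]}$, while the boundary condition $\lim_{y\to0}u(y)=0$ of \Cref{T7}(c3) becomes $\lim_{x\to0}x^{(a+\sqrt D-1)/2}f(x)=0$ since $-\tilde s/r=(a+\sqrt D-1)/2$. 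Moreover $\Gk(Tf)=r^2T\big(\mathcal{A}_\alpha(a,b)f\big)$, so $\Gk(Tf)\in X_m$ is equivalent to $\mathcal{A}_\alpha(a,b)f\in X_n$; altogether $T$ is a bijection of $\dom(\mathbb{A}_n)$ onto $\dom(A_{1,m})$.

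To conclude, one notes that conjugation by the isometric isomorphism $T$ together with multiplication by the positive constant $r^{-2}$ preserves the property of generating a bounded, positive analytic semigroup of angle $\pi/2$, and also preserves contractivity; \Cref{T7} on $X_m$ --- applicable since $\kappa<1$ --- then delivers all assertions of \Cref{T5}, the contractivity statement corresponding to $m\in(\kappa,1]$, i.e. $n\in(\nsbm,\nsbp]$. The one case not covered by this argument is the endpoint $D=0$, where the reduction yields $\kappa=1$ and \Cref{T7} does not apply directly; this borderline situation must be handled separately, either by the corresponding generation statement for $\Gk$ at $\kappa=1$ or by a limiting argument as $D\to0^+$. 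I expect this $D=0$ case --- not the long but routine bookkeeping of parameter ranges and domains --- to be the main obstacle.
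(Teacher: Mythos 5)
Your proposal is correct and follows essentially the same route as the paper: your transformation $T$ is, up to the positive constant $2/|2-\alpha|$, exactly the inverse of the isometry $\mathcal{T}_{-\alpha/2}\mathcal{M}_{l_-}$ of \Cref{L14}, your $\tilde{s}$ is the paper's $l_-$, your $\kappa$ is the $\kappa_-$ of \Cref{L16}, and your weight map $n\mapsto m$ is the paper's $m_-(n)$ from \eqref{mpmn}, after which the conclusion is read off from \Cref{T7} exactly as you describe. Your closing caveat about $D=0$ is legitimate but does not distinguish your argument from the paper's: the published proof likewise writes ``since $D>0$'' before invoking \Cref{T7} (which requires $\kappa_-<1$, i.e.\ $D>0$), so the borderline case $D=0$, where $\kappa_-=1$, is not covered by the paper's argument either.
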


\begin{rem}\label{R3}
For the specific operator $\mathcal{A}_\alpha\bigl(2\alpha,\alpha(\alpha-1)\bigr)$ with $\alpha<2$ explicitly given by
$$
\mathcal{A}_\alpha\bigl(2\alpha,\alpha(\alpha-1)\bigr)f(x) = x^\alpha\left( f''(x)+\frac{2\alpha}{x}  f'(x)+\frac{\alpha(\alpha-1)}{x^2}f(x)\right) = \left(x^\alpha f\right)'',\quad x>0,
$$
it holds that $D=1$ and that $(\nsb,\nsbm,\nsbp)=(\alpha-2,0,1)$.

Similarly, for the operator $\mathcal{A}_\alpha(\alpha,0)$ with $1\neq\alpha<2$ given by
$$
\mathcal{A}_\alpha(\alpha,0)f(x) = x^\alpha\left(f''(x)+\frac{\alpha}{x}  f'(x)\right) = \left(x^\alpha f'\right)' , \quad x>0,
$$
it holds that $D=|\alpha-1|$ and that 
$$
(\nsb,\nsbm,\nsbp)=\begin{cases}
 (-1,1-\alpha,0),&\alpha\in (1,2),\\
 (\alpha-2,0,1-\alpha),&\alpha\in (-\infty,1).
\end{cases}
$$
\end{rem}

A similar result is available in the degenerate case when $\alpha>2$.

\begin{thm}\label{T6}
	Let $\alpha>2$ and take $(a,b)\in\R^2$ with $D:=(a-1)^2-4b\ge 0$.  Let $n\in [\nsam,\nsa)$, where
	$$
	\nsapm :=\frac{1-2\alpha+a\pm \sqrt{D}}{2}, \qquad \nsa := \frac{a-3+\sqrt{D}}{2}.
	$$
Define
$$
\mathbb{A}_n f:=\mathcal{A}_\alpha(a,b)f,\quad f\in \dom(\mathbb{A}_n),
$$
where $\dom(\mathbb{A}_n)$ depends on the parameters $\alpha$, $a$, $b$, and $n$ as follows:
\begin{itemize}
	\item [(c1)] if $\nsap\le n<\nsa$, then
	\begin{equation*}
		\dom(\mathbb{A}_n) = \big\{ f \in \Y_{[n,\nsa)}\, \big |\,\mathcal{A}_\alpha(a,b) f\in X_n \big\};
	\end{equation*}
	\item [(c2)] if $D>0$ and $\nsam<n<\nsap$, then
	\begin{equation*}
		\dom(\mathbb{A}_n) = \big\{ f \in \Y_{[n,n-2+\alpha]}\, \big |\, \mathcal{A}_\alpha(a,b) f\in X_n \big\};
	\end{equation*}
	\item [(c3)] if $n=\nsam$, then
	\begin{multline*}
	\dom\big(\mathbb{A}_{\nsam}\big) = \Big\{ f \in \Y_{[\nsam,\nsam-2+\alpha)}\,\Big |\,
	\mathcal{A}_\alpha(a,b) f\in X_{\nsam},\:
	\displaystyle{\lim_{x\to \infty} x^{(a-\sqrt{D}-1)/2}f(x) = 0} \Big\}.
	\end{multline*}
\end{itemize}
Then $\mathbb{A}_n$ generates a bounded and positive analytic semigroup on $X_n$ of angle $\pi/2$ which is contractive when  $D>0$ and $n\in\left[\nsam,\nsap\right)$.
\end{thm}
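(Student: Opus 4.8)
The plan is to deduce \Cref{T6} from \Cref{T5} via the inversion $x\mapsto 1/x$, which interchanges the behaviour at $0$ and at $\infty$ and thereby turns a degenerate operator into a singular one. Introduce the linear bijection $\mathcal{I}$ of $\mathcal{D}'\big((0,\infty)\big)$ onto itself defined by $(\mathcal{I}f)(x):=f(1/x)$; then $\mathcal{I}^2=\mathrm{id}$, and a routine computation with the chain rule gives the intertwining identity
\begin{equation*}
	\mathcal{I}\circ\mathcal{A}_\alpha(a,b)\circ\mathcal{I} = \mathcal{A}_{4-\alpha}(2-a,b) \qquad \text{on } \mathcal{D}'\big((0,\infty)\big).
\end{equation*}
Since $\big((2-a)-1\big)^2-4b = (a-1)^2-4b = D$, the discriminant is unchanged, while $4-\alpha<2$ because $\alpha>2$, so that $\mathcal{A}_{4-\alpha}(2-a,b)$ is a singular operator to which \Cref{T5} applies. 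Moreover, the substitution $y=1/x$ shows that $\mathcal{I}$ maps $X_n$ isometrically onto $X_{-n-2}$ for every $n\in\R$ and is positivity preserving; hence it maps $\Y_I$ bijectively onto $\Y_{-2-I}$ for every non-empty interval $I$, where $-2-I:=\{-2-r : r\in I\}$.

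Next I would match the parameters. Set $\alpha':=4-\alpha$, $a':=2-a$, $b':=b$, so that $\mathcal{A}_{\alpha'}(a',b')$ has discriminant $D$. Denoting by $\widetilde{n}_*$ and $\widetilde{n}_*^{\pm}$ the quantities $\tfrac{1}{2}(a'-3-\sqrt{D})$ and $\tfrac{1}{2}(1-2\alpha'+a'\pm\sqrt{D})$ associated with $\mathcal{A}_{\alpha'}(a',b')$ in \Cref{T5}, a short computation gives
\begin{equation*}
	\widetilde{n}_* = -2-\nsa, \qquad \widetilde{n}_*^{-} = -2-\nsap, \qquad \widetilde{n}_*^{+} = -2-\nsam .
\end{equation*}
Consequently $n\mapsto n':=-n-2$ is a decreasing affine bijection of $[\nsam,\nsa)$ onto $(\widetilde{n}_*,\widetilde{n}_*^{+}]$ that carries $[\nsap,\nsa)$ onto $(\widetilde{n}_*,\widetilde{n}_*^{-}]$, the open interval $(\nsam,\nsap)$ onto $(\widetilde{n}_*^{-},\widetilde{n}_*^{+})$, and the point $\nsam$ onto $\widetilde{n}_*^{+}$; that is, the three cases (c1)--(c3) of \Cref{T6} correspond exactly to the cases (c1)--(c3) of \Cref{T5} for $\mathcal{A}_{\alpha'}(a',b')$, with the extra requirement $D>0$ in case (c2) matching on both sides.

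Then I would transport the domains and conclude. Let $\widetilde{\mathbb{A}}_{n'}$ be the operator provided by \Cref{T5} for $\mathcal{A}_{\alpha'}(a',b')$ on $X_{n'}$ with $n'=-n-2$, and let $\widetilde{S}$ be the semigroup it generates. From $\mathcal{A}_{\alpha'}(a',b')(\mathcal{I}f)=\mathcal{I}\big(\mathcal{A}_\alpha(a,b)f\big)$ and the mapping properties of $\mathcal{I}$ above, one checks case by case that $\mathcal{I}$ sends the domain described in \Cref{T5} onto the domain described in \Cref{T6}: the requirement $g\in\Y_J$ becomes $f\in\Y_{-2-J}$, and $-2-J$ is precisely the interval occurring in \Cref{T6} once the parameter dictionary is used; the requirement $\mathcal{A}_{\alpha'}(a',b')g\in X_{n'}$ becomes $\mathcal{A}_\alpha(a,b)f\in X_n$; and, in case (c3), the boundary condition $\lim_{x\to0}x^{(a'+\sqrt{D}-1)/2}g(x)=0$ becomes, after $g(x)=f(1/x)$ and $-(a'+\sqrt{D}-1)/2=(a-\sqrt{D}-1)/2$, exactly $\lim_{x\to\infty}x^{(a-\sqrt{D}-1)/2}f(x)=0$. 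Hence $\mathbb{A}_n=\mathcal{I}\,\widetilde{\mathbb{A}}_{n'}\,\mathcal{I}$, so $\mathbb{A}_n$ generates $z\mapsto\mathcal{I}\,\widetilde{S}(z)\,\mathcal{I}$ on $X_n$; and since $\mathcal{I}$ is an isometric, positivity-preserving isomorphism between the relevant weighted $L_1$-spaces, boundedness, positivity, analyticity of angle $\pi/2$, and contractivity when $D>0$ and $n\in[\nsam,\nsap)$ are all inherited from \Cref{T5}.

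The main effort is the bookkeeping sketched above --- the parameter identities and, most of all, the verification that the three domain descriptions of \Cref{T5} and \Cref{T6} are images of one another under $\mathcal{I}$; no new analytic difficulty arises. (Alternatively, one may avoid \Cref{T5} and reduce $\mathcal{A}_\alpha(a,b)$ directly to the Bessel operator $\Gk$ of \eqref{gk}, as in the proof of \Cref{T5}: a change of variables $x\mapsto x^{(2-\alpha)/2}$, up to a positive multiplicative constant, together with a multiplication by a suitable power of the variable, conjugates $\mathcal{A}_\alpha(a,b)$ to $\Gk$ with $\kappa=1+\tfrac{2\sqrt{D}}{2-\alpha}\le 1$, and \Cref{T7} then applies once the admissible range of its weight is matched to $n\in[\nsam,\nsa)$.) The only point that is not purely formal is the limiting case $D=0$, where the reduced Bessel parameter equals $1$ and lies outside the hypotheses of \Cref{T7}; this case is, however, already subsumed in \Cref{T5}, so the inversion argument covers it without extra work.
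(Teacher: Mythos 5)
Your argument is correct, but it takes a genuinely different route from the paper's. The paper proves \Cref{T6} by re-running the similarity transformation of \Cref{L16} with the other root: it sets $l:=l_+$, $\kappa:=\kappa_+=1+\tfrac{2\sqrt{D}}{2-\alpha}$ and $m:=m_+(n)$, checks that $n\in[\nsam,\nsa)$ forces $\kappa<m\le 1$, and then applies \Cref{T7} directly, ``arguing as in the proof of \Cref{T5}'' while paying attention to the negativity of $2-\alpha$ in \eqref{fff} and \eqref{x006} (note that $\Tk_{-\alpha/2}$ with $\alpha>2$ already contains an inversion, since the exponent $1-\alpha/2$ is negative). You instead factor that inversion out explicitly and land on \Cref{T5} itself: the intertwining identity $\mathcal{I}\circ\mathcal{A}_\alpha(a,b)\circ\mathcal{I}=\mathcal{A}_{4-\alpha}(2-a,b)$, the isometry $\mathcal{I}:X_n\to X_{-n-2}$, the parameter dictionary $\widetilde{n}_*=-2-\nsa$, $\widetilde{n}_*^{-}=-2-\nsap$, $\widetilde{n}_*^{+}=-2-\nsam$, the correspondence of the three domain descriptions, and the transformed boundary condition in case (c3) all check out. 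What your approach buys is that the domain identification and the generation/contractivity statements need not be re-derived for the degenerate case; everything becomes bookkeeping on top of \Cref{T5}. One caveat: your closing remark that the case $D=0$ is ``subsumed in \Cref{T5}'' is formally legitimate (you may cite \Cref{T5} as stated), but it does not add robustness, since the paper's own proof of \Cref{T5} invokes $\kappa_-<1$, which likewise fails when $D=0$; your reduction simply transports whatever is true of \Cref{T5} at $D=0$ to \Cref{T6}, exactly as the paper's argument does.
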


As in \cite{MNS_JEE18, MNS_JDE22}, the proofs of \Cref{T5} and of \Cref{T6} rely on isometric transformations showing the similarity of $\mathcal{A}_\alpha(a,b)$ and $\Gk$ for appropriate values of $(\alpha,a,b)$ and~$\kappa$.

The rest of this paper is structured as follows. In \Cref{SP} we recall the results established in \cite{MNS_JDE22} and some useful properties enjoyed by the modified Bessel functions used throughout the paper. \Cref{S3} is devoted to the analysis of the operator $\Gk$ and to the proof of \Cref{T7}, from which \Cref{T5} and \Cref{T6} are deduced in \Cref{S4}. Generation results for singular or degenerate diffusion operators with absorption are established last. These build on the results for the diffusion part already established and on \cite{ArBa1993}. Auxiliary technical results are collected in the Appendices.

\section{Preliminaries}\label{SP}

The next theorem contains a summary of the results obtained in \cite{MNS_JDE22} for the homogeneous Dirichlet boundary value problem for the singular operator
$$
\Gk f(x) = f''(x) + \frac{\kappa}{x} f'(x), \quad x>0,
$$
in $L_{p,m}$ for $p\in (1,\infty)$. Its statement uses the modified Bessel functions $I_\nu$ and $K_\nu$. Their properties relevant to the paper at hand can be found in \cite[Section~9.6]{AbSt1972} and in \Cref{Bessel}. It also makes use of the integral operator defined by
\begin{subequations}\label{M1}
	\begin{equation}\label{m2}
		\big(\mathcal{S}(z) f\big)(x):=\int_0^\infty k_\kappa(z,x,r) f(r)\,\rd r
	\end{equation}
	with kernel given by
	\begin{equation}\label{m1}
		k_\kappa(z,x,r):=\frac{1}{2z}r^\kappa (xr)^{\frac{1-\kappa}{2}} \exp\left(-\frac{x^2+r^2}{4z}\right) I_{\frac{1-\kappa}{2}}\left(\frac{xr}{2z}\right)
	\end{equation}
	for $z\in\mathbb{C}$ with $\mathrm{Re}\,z>0$ and $(x,r)\in (0,\infty)^2$.
\end{subequations}

\begin{thm}[\cite{MNS_JDE22}]\label{T0}
Let $m\in\mathbb{R}$, $p\in (1,\infty)$, and $\kappa<1$ be such that $\kappa-1<\frac{m+1}{p}<2$. Then $S_{p,m} := \mathcal{S}|_{L_{p,m}}$ is a bounded and positive analytic semigroup on $L_{p,m}$ of angle $\pi/2$. If $B_{p,m}$ denotes its generator, then $B_{p,m}f=\Gk f$ for $f\in \dom(B_{p,m})$. Its resolvent is given by
\begin{subequations}\label{M2}
\begin{equation}\label{m3}
\big( (\lambda-B_{p,m})^{-1}f\big)(x) = \int_0^\infty G_\kappa(\lambda,x,r)\, f(r)\, r^\kappa\,\rd r,\quad x>0,
\end{equation}
for $\lambda\in \C\setminus (-\infty,0]$ and $f\in L_{p,m}$, where
\begin{equation}\label{m4}
G_\kappa(\lambda,x,r):=\left\{\begin{array}{ll} (xr)^\frac{1-\kappa}{2}\, I_\frac{1-\kappa}{2}(\sqrt{\lambda}x)\, K_\frac{1-\kappa}{2}(\sqrt{\lambda}r), & x\le r,\\[2mm]
(xr)^\frac{1-\kappa}{2}\, K_\frac{1-\kappa}{2}(\sqrt{\lambda}x)\, I_\frac{1-\kappa}{2}(\sqrt{\lambda}r), & x\ge r.
\end{array}\right.
\end{equation}
\end{subequations}
\end{thm}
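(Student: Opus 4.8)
Although \Cref{T0} is quoted from \cite{MNS_JDE22}, let me indicate how I would prove it by reading everything off the explicit kernel in \eqref{M1}. Write $\nu := (1-\kappa)/2$, so $\nu>0$ since $\kappa<1$, and note that $k_\kappa(z,\cdot,\cdot)$ is the Dirichlet heat kernel of $\Gk$ on $(0,\infty)$: it is positive for real $z>0$ (because $I_\nu>0$ on $(0,\infty)$), holomorphic in $z$ on the right half-plane, it solves $\partial_z k_\kappa=\Gk^x k_\kappa$, and it has the Chapman--Kolmogorov property. Three things are to be shown: (i) $S_{p,m}:=\mathcal{S}|_{L_{p,m}}$ is a bounded, positive, analytic semigroup of angle $\pi/2$ on $L_{p,m}$; (ii) its generator $B_{p,m}$ acts as $\Gk$; (iii) its resolvent is the integral operator with kernel $G_\kappa(\lambda,\cdot,\cdot)$ against $r^\kappa\,\rd r$. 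Positivity of $S_{p,m}(t)$ and of the resolvent for $t,\lambda>0$ is immediate from $k_\kappa>0$, $G_\kappa>0$, so the content is analytic.

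The structural fact underlying (i) is the exact dilation covariance $\mathcal{S}(z)D_\lambda=D_\lambda\mathcal{S}(\lambda^2 z)$, where $(D_\lambda f)(x):=f(\lambda x)$ (checked directly from \eqref{m1}); since $D_\lambda$ multiplies the $L_{p,m}$-norm by $\lambda^{-(m+1)/p}$, this forces $\|\mathcal{S}(z)\|_{\ml(L_{p,m})}$ to depend on $z$ only through $\arg z$. Thus (i) reduces to: (a) boundedness of the single operator $\mathcal{S}(e^{i\theta})$ on $L_{p,m}$ for each $|\theta|<\pi/2$ with $\theta\mapsto\|\mathcal{S}(e^{i\theta})\|$ continuous, which yields $\sup_{|\arg z|\le\pi/2-\varepsilon}\|\mathcal{S}(z)\|<\infty$ and in particular $\sup_{t>0}\|\mathcal{S}(t)\|<\infty$; (b) the semigroup law $\mathcal{S}(z_1)\mathcal{S}(z_2)=\mathcal{S}(z_1+z_2)$; (c) strong continuity $\mathcal{S}(z)f\to f$ as $z\to0$ in the sector; (d) analyticity of $z\mapsto\mathcal{S}(z)$. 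For (a) I would run a weighted Schur test: conjugating by $f\mapsto x^{-m/p}f$ turns $\mathcal{S}(e^{i\theta})$ into an operator on $L_p(\rd x)$ with kernel $x^{m/p}k_\kappa(e^{i\theta},x,r)r^{-m/p}$, and with a power weight $x^\gamma$ one is led to estimate the kernel moments $\int_0^\infty k_\kappa(e^{i\theta},x,r)r^{\beta}\,\rd r$, which are the classical Weber--Schafheitlin/confluent-hypergeometric integrals of $I_\nu$ (see \Cref{Bessel}); since $\mathrm{Re}(1/z)=\cos\theta>0$ the Gaussian factor $\exp(-(x^2+r^2)/4z)$ always dominates the exponential growth of $I_\nu$, so these moments are bounded by explicit powers of $x$ uniformly in $|\theta|\le\pi/2-\varepsilon$, and the Schur weight $\gamma$ can be chosen so that both moment integrals converge and the bound is homogeneous of degree $0$ precisely when $\kappa-1<\tfrac{m+1}{p}<2$. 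Point (b) follows from Weber's second exponential integral $\int_0^\infty e^{-cs^2}I_\nu(ps)I_\nu(qs)\,s\,\rd s=\tfrac{1}{2c}e^{(p^2+q^2)/4c}I_\nu(pq/2c)$ for $c>0$, giving $\int_0^\infty k_\kappa(t_1,x,s)k_\kappa(t_2,s,r)\,\rd s=k_\kappa(t_1+t_2,x,r)$ for $t_1,t_2>0$, which propagates to the right half-plane by analytic continuation; (c) is proved first on the range of the resolvent (or on $C_c^\infty((0,\infty))$) using that $k_\kappa(t,\cdot,\cdot)$ is an approximate identity for $r^\kappa\,\rd r$, then extended to $L_{p,m}$ by the bound in (a); and (d) follows from holomorphy of $k_\kappa$ in $z$ together with those bounds and Morera's theorem.

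For (ii)--(iii): with (i) in hand, $B_{p,m}$ is sectorial, $\C\setminus(-\infty,0]\subset\varrho(B_{p,m})$, and $(\lambda-B_{p,m})^{-1}=\int_0^\infty e^{-\lambda t}\mathcal{S}(t)\,\rd t$ for $\mathrm{Re}\,\lambda>0$, the formula continuing analytically to $\lambda\in\C\setminus(-\infty,0]$. Interchanging the $t$- and $r$-integrations, \eqref{m3}--\eqref{m4} reduce to the Laplace-transform identity $\int_0^\infty\tfrac{1}{2t}e^{-\lambda t}\exp(-(x^2+r^2)/4t)I_\nu(xr/2t)\,\rd t=I_\nu(\sqrt\lambda\,(x\wedge r))K_\nu(\sqrt\lambda\,(x\vee r))$, a standard modified-Bessel formula (again \Cref{Bessel}). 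That $B_{p,m}f=\Gk f$ on $\dom(B_{p,m})$ then follows because $G_\kappa$ is the Green's function of $\lambda-\Gk$ for the measure $r^\kappa\,\rd r$: the homogeneous equation $(\lambda-\Gk)g=0$ has the solutions $x^\nu I_\nu(\sqrt\lambda x)$ and $x^\nu K_\nu(\sqrt\lambda x)$, with local behaviours $x^{1-\kappa}$ and $x^0$ at the origin (the indicial exponents of $\Gk$, since $\Gk x^\sigma=\sigma(\sigma+\kappa-1)x^{\sigma-2}$) and exponential growth resp.\ decay at $+\infty$, while the Bessel Wronskian gives $x^\kappa(u_1u_2'-u_1'u_2)\equiv1$ for $u_1=x^\nu K_\nu(\sqrt\lambda x)$, $u_2=x^\nu I_\nu(\sqrt\lambda x)$, exactly the normalisation making \eqref{m4} the Green's function that selects the $x^{1-\kappa}$-behaviour at $0$ (the homogeneous Dirichlet condition) and decay at $\infty$. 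Hence for $f=(\lambda-B_{p,m})^{-1}h$ with $h\in L_{p,m}$ one obtains $(\lambda-\Gk)f=h$ in $\mathcal{D}'((0,\infty))$, so $\Gk f=\lambda f-h\in L_{p,m}$ and $B_{p,m}f=\Gk f$; $\dom(B_{p,m})$ is the range of the resolvent, and checking $f=\int_0^\infty G_\kappa(\lambda,\cdot,r)h(r)r^\kappa\,\rd r\in L_{p,m}$ again invokes the integrability of the two homogeneous solutions near $0$ and $\infty$, where $\kappa-1<\tfrac{m+1}{p}<2$ re-enters.

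The main obstacle is point (a): establishing $\mathcal{S}(e^{i\theta})\in\ml(L_{p,m})$ uniformly for $|\theta|\le\pi/2-\varepsilon$ on exactly the range $\kappa-1<\tfrac{m+1}{p}<2$. This requires sharp two-sided control of the Bessel-kernel moments $\int_0^\infty k_\kappa(e^{i\theta},x,r)r^\beta\,\rd r$ — interpolating between the small- and large-argument regimes of $I_\nu$ with complex argument — together with a choice of Schur weight for which both moment conditions hold and the estimate has the correct (zero) homogeneity; the endpoints $\tfrac{m+1}{p}\in\{\kappa-1,2\}$ are the delicate cases, which is precisely why the companion analysis in \Cref{T7} must treat the borderline $m=1$ (for $p=1$) separately. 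A secondary, purely technical, layer is the justification of the Fubini interchanges and of the analytic continuations in $z$ and in $\lambda$, for which the uniform kernel bounds from (a) are once more the essential input.
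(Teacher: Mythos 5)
The paper does not actually prove \Cref{T0}: its ``proof'' is a one-line citation of \cite{MNS_JDE22}, so there is no internal argument to compare yours against. Your sketch is, as far as it goes, a correct and coherent reconstruction of how such a result is established, and several of its ingredients are genuinely the right ones: the dilation covariance $\mathcal{S}(z)D_\lambda=D_\lambda\mathcal{S}(\lambda^2 z)$ does hold for the kernel \eqref{m1} and correctly reduces sectorial boundedness to boundedness of $\mathcal{S}(e^{i\theta})$; Weber's second exponential integral does give the Chapman--Kolmogorov identity for $k_\kappa$ (the exponent bookkeeping works out because $\kappa+2\nu=1$); the Laplace-transform identity you quote does convert $\int_0^\infty e^{-\lambda t}k_\kappa(t,x,r)\,\rd t$ into \eqref{m4}; and your Wronskian normalisation $x^\kappa(u_1u_2'-u_1'u_2)\equiv 1$ and the indicial exponents $0$ and $1-\kappa$ are correct, so $G_\kappa$ is indeed the Green's function selecting the $x^{1-\kappa}$ behaviour at the origin. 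The main methodological difference from the source is in step (a): \cite{MNS_JDE22} does not run a Schur test but proves a pointwise Gaussian-type bound on the kernel, uniform on sectors $\dot\Sigma_{\pi/2-\varepsilon}$ --- this is exactly the estimate \eqref{k11} quoted later in \Cref{S3} from \cite[Proposition~2.8]{MNS_JDE22} --- and then deduces $L_{p,m}$-boundedness by elementary integration (the $p=1$ analogue being \Cref{P11x} in the Appendix). Your Schur-test route is viable but is also the one place where your sketch is thinnest: a single power-weight Schur test yields \emph{some} open range of $(m,p)$, and you would still need to verify that it covers the full strip $\kappa-1<\frac{m+1}{p}<2$ (your word ``precisely'' additionally asserts a sharpness that the theorem does not require and that a Schur test alone does not give). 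If you intend this as a self-contained proof rather than a sketch, that uniform sectorial kernel estimate is the nontrivial content you must actually supply.
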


\begin{proof}
This follows from \cite[Proposition~2.5, Theorem~2.6, and Proposition~3.2]{MNS_JDE22}.
\end{proof}

\begin{rem}\label{rem.dom}
		On the assumptions of \Cref{T0}, the domain of $B_{p,m}$ is characterized precisely in \cite[Proposition~3.5 and Corollary~3.6]{MNS_JDE22}.
\end{rem}

For later use, we recall the symmetries
\begin{equation}
	x^{\kappa} k_\kappa(z,x,r) = r^{\kappa} k_\kappa(z,r,x) , \quad G_\kappa(z,x,r) = G_\kappa(z,r,x), \label{sym}
\end{equation}
valid for $(x,r)\in (0,\infty)^2$ and where $z\in\mathbb{C}$ with $\mathrm{Re}\,z>0$.

The next lemma collects several properties of the modified Bessel functions $I_\nu$ and $K_\nu$ which can be found in \cite[Sections~9.6 and~9.7]{AbSt1972}.

\begin{lem}\label{Bessel}
Let $\nu>-1$. The modified Bessel functions $(I_\nu,K_\nu)$ form a fundamental system of the second-order linear differential equation
\begin{equation}
	z^2 v''(z)+ z v'(z)-(z^2+\nu^2)v(z)=0,\quad \mathrm{Re}\,z>0. \label{eq_mbessel}
\end{equation}
Moreover,
\begin{align}
	I_\nu'(z) & = I_{\nu+1}(z) + \frac{\nu}{z} I_\nu(z), \quad \mathrm{Re}\, z>0, \label{edI} \\
	K_\nu'(z) & = - K_{\nu+1}(z) + \frac{\nu}{z} K_\nu(z), \quad \mathrm{Re}\, z>0. \label{edK}
\end{align}

It holds that
	\begin{subequations}\label{mbp}
		\begin{align}
	I_\nu(z)\sim & 
   \frac{1}{\Gamma(\nu+1)} \left( \frac{z}{2} \right)^\nu,\quad\nu>-1,
   \label{mbpa}
   \\[2pt]
			K_\nu(z)  \sim & \begin{cases}\frac{\Gamma(|\nu|)}{2} \left( \frac{z}{2} \right)^{-\nu},& \nu\ne 0,\\ - \ln{z} ,& \nu=0,\end{cases}\label{mbpb}
		\end{align}
		as $z\to 0$ and
		\begin{equation}
			I_\nu(z) \sim \frac{1}{\sqrt{2\pi}}\,\frac{e^{z}}{|z|^{\frac{1}{2}}}, \qquad K_\nu(z) \sim \sqrt{\frac{\pi}{2}} \,\frac{e^{-z}}{|z|^{\frac{1}{2}}} \label{mbpc}
		\end{equation}
    as $z\to\infty$.
	\end{subequations}
	There are positive constants $C_I$ and $C_K$ such that
	\begin{subequations}\label{i20i21}
		\begin{align}
			0&<I_\nu(x)\le C_I\big[ x^\nu\, \chi_{(0,1]}(x)+x^{-1/2}\,e^x \,\chi_{[1,\infty)}(x)\big],\quad x>0,\label{i20}
			\\0&< K_\nu(x)\le C_K\big[ x^{-\nu}\, \chi_{(0,1]}(x)+x^{-1/2}\,e^{-x} \,\chi_{[1,\infty)}(x)\big],\quad x>0.\label{i21}
		\end{align}
	\end{subequations}
Finally, for $\ve>0$ there is $C=C(\nu,\ve)>0$ such that
\begin{equation}\label{In}
\vert I_\nu(z)\vert\le C\,\big(1\wedge \vert z\vert\big)^{\nu+1/2}\, \frac{e^{\mathrm{Re}\, z}}{\vert z\vert ^{\frac{1}{2}}},\quad z\in \dot{\Sigma}_{\pi/2-\ve}.
\end{equation}
\end{lem}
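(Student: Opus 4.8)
The plan is the following. Apart from the final estimate~\eqref{In}, every assertion in \Cref{Bessel} is a classical fact about the modified Bessel functions and only needs to be quoted: that $(I_\nu,K_\nu)$ is a fundamental system of~\eqref{eq_mbessel}, the differentiation formulas~\eqref{edI}--\eqref{edK}, and the one-term asymptotics~\eqref{mbpa}--\eqref{mbpc} are all recorded in \cite[Sections~9.6 and~9.7]{AbSt1972}. For the two-sided bounds~\eqref{i20i21} I would argue by patching three regimes: $I_\nu$ and $K_\nu$ are positive and continuous on $(0,\infty)$, so~\eqref{mbpa}--\eqref{mbpb} give $\delta\in(0,1]$ with $I_\nu(x)\le C\,x^\nu$ and $K_\nu(x)\le C\,x^{-\nu}$ on $(0,\delta]$ (for $\nu\neq0$, the only orders occurring in the sequel),~\eqref{mbpc} gives $R\ge1$ with $I_\nu(x)\le C\,x^{-1/2}e^{x}$ and $K_\nu(x)\le C\,x^{-1/2}e^{-x}$ on $[R,\infty)$, and on the compact interval $[\delta,R]$ both functions are bounded while all the comparison functions are bounded below by positive constants; patching these and absorbing constants across $x=1$ yields~\eqref{i20} and~\eqref{i21}.

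The substance of the lemma is~\eqref{In}, and I would establish it in two moves. First reduce to $\nu>-\tfrac12$: for $-1<\nu\le-\tfrac12$ the three-term recurrence $I_\nu(z)=I_{\nu+2}(z)+\tfrac{2(\nu+1)}{z}\,I_{\nu+1}(z)$ (see \cite[Section~9.6]{AbSt1972}) expresses $I_\nu$ through $I_{\nu+1}$ and $I_{\nu+2}$, whose orders lie in $(-\tfrac12,\tfrac32]$; granting~\eqref{In} for those two orders, an elementary case split — using $|z|^{\nu+2}+|z|^{\nu}\le2|z|^{\nu}$ when $|z|\le1$ and $1+|z|^{-1}\le2$ when $|z|\ge1$ — returns~\eqref{In} for $\nu$. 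Thus we may assume $\nu>-\tfrac12$ and start from the Poisson-type representation \cite[Section~9.6]{AbSt1972}
\[
I_\nu(z)=\frac{(z/2)^{\nu}}{\sqrt{\pi}\,\Gamma(\nu+\tfrac12)}\int_{-1}^{1}(1-t^2)^{\nu-1/2}\,e^{zt}\,\rd t ,
\]
which for real $\nu$ yields $|I_\nu(z)|\le c_\nu\,|z|^{\nu}\,J(\mathrm{Re}\,z)$ with $J(\sigma):=\int_{-1}^{1}(1-t^2)^{\nu-1/2}\,e^{\sigma t}\,\rd t$ and $c_\nu:=2^{-\nu}\pi^{-1/2}\Gamma(\nu+\tfrac12)^{-1}$. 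For $|z|\le1$ one has $J(\mathrm{Re}\,z)\le e\,B(\nu+\tfrac12,\tfrac12)$, hence $|I_\nu(z)|\le C\,|z|^{\nu}=C\,(1\wedge|z|)^{\nu+1/2}|z|^{-1/2}\le C\,(1\wedge|z|)^{\nu+1/2}\,e^{\mathrm{Re}\,z}|z|^{-1/2}$, since $\mathrm{Re}\,z>0$ on $\dot{\Sigma}_{\pi/2-\ve}$.

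For $|z|\ge1$ with $|\arg z|\le\tfrac{\pi}{2}-\ve$, put $\sigma:=\mathrm{Re}\,z\ge|z|\sin\ve\ge\sin\ve>0$ and substitute $t=1-s$ to get $J(\sigma)=e^{\sigma}\int_{0}^{2}s^{\nu-1/2}(2-s)^{\nu-1/2}e^{-\sigma s}\,\rd s$; estimating $(2-s)^{\nu-1/2}$ on $[0,1]$ and extending that part to $[0,\infty)$ produces a term $C\,\sigma^{-\nu-1/2}$, while on $[1,2]$ one uses $e^{-\sigma s}\le e^{-\sigma}$ and the integrability of $(2-s)^{\nu-1/2}$ (valid because $\nu>-\tfrac12$) to produce a term of order $e^{-\sigma}$, so that $J(\sigma)\le C\,(e^{\sigma}\sigma^{-\nu-1/2}+1)\le C'\,e^{\sigma}\sigma^{-\nu-1/2}$, the last inequality holding because $\sigma\mapsto\sigma^{\nu+1/2}e^{-\sigma}$ is bounded on $[\sin\ve,\infty)$. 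Since $\nu+\tfrac12>0$ and $\sigma\ge|z|\sin\ve$, this gives $|I_\nu(z)|\le c_\nu\,|z|^{\nu}\,C'e^{\sigma}\sigma^{-\nu-1/2}\le C_\ve\,e^{\mathrm{Re}\,z}|z|^{-1/2}=C_\ve\,(1\wedge|z|)^{\nu+1/2}\,e^{\mathrm{Re}\,z}|z|^{-1/2}$, which together with the preceding paragraph proves~\eqref{In}. I expect this last step to be the only genuine obstacle: one must keep the constant uniform over the whole truncated sector, which is exactly where the hypothesis $\ve>0$ enters, through $\mathrm{Re}\,z\asymp|z|$, converting the $\sigma^{-\nu-1/2}$ decay of the Laplace-type integral into the required power $|z|^{-1/2}$; and the preliminary reduction to $\nu>-\tfrac12$ cannot be avoided, since both the Poisson kernel and its exponent degenerate at $\nu=-\tfrac12$.
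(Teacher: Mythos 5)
Your proposal is correct. Note, however, that the paper offers no proof of this lemma at all: it simply records these facts with a pointer to \cite[Sections~9.6 and~9.7]{AbSt1972} (and, implicitly for~\eqref{In}, to the analogous estimates used in \cite{MNS_JDE22}). You instead supply a genuine argument, and the comparison is therefore between a citation and a self-contained derivation. Your treatment of~\eqref{eq_mbessel}--\eqref{mbpc} and of~\eqref{i20i21} by patching the $z\to0$ and $z\to\infty$ asymptotics across a compact interval is exactly what the cited tables justify, and your proof of~\eqref{In} is sound: the Poisson representation $I_\nu(z)=\frac{(z/2)^\nu}{\sqrt{\pi}\,\Gamma(\nu+\frac12)}\int_{-1}^1(1-t^2)^{\nu-1/2}e^{zt}\,\rd t$ gives $|I_\nu(z)|\le c_\nu|z|^\nu J(\mathrm{Re}\,z)$ for real $\nu>-\frac12$, the Laplace-type estimate of $J$ after the substitution $t=1-s$ is where the sector condition enters through $\mathrm{Re}\,z\ge|z|\sin\ve$, and the three-term recurrence $I_\nu=I_{\nu+2}+\frac{2(\nu+1)}{z}I_{\nu+1}$ correctly extends the bound to $-1<\nu\le-\frac12$; I checked the exponent bookkeeping in both regimes $|z|\le1$ and $|z|\ge1$ and it closes. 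What your approach buys is independence from the uniform asymptotic expansions one would otherwise have to quote for complex argument in a sector; what it costs is length for a result the authors treat as tabulated. One small caveat worth keeping in mind (which you partly flag): as literally stated, \eqref{mbpb} and \eqref{i21} fail for $\nu\le0$ because $K_\nu=K_{|\nu|}$ blows up at the origin for $\nu=0$ and behaves like $x^{\nu}$, not $x^{-\nu}$, for $\nu<0$; this is an imprecision of the statement itself, harmless here since the paper only ever uses $\nu=(1-\kappa)/2>0$.
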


\section{The Semigroup and its Generator\texorpdfstring{ in $X_m$}{}}\label{S3}

We first prove that $S_{1,m}:=\mathcal{S}|_{X_m}$, see~\eqref{M1}, is an analytic semigroup on $X_m$ whenever $m\in (\kappa-2,1]$. 

\begin{prop}\label{P11}
Let $\kappa <1$, $\kappa-2<m\le 1$, and $S_{1,m}=\mathcal{S}|_{X_m}$. Then  $S_{1,m}$ is a bounded and positive analytic semigroup on $X_m$ of angle $\pi/2$. Moreover, for each $\ve>0$ and  $\theta\ge 0$ with $\theta+\kappa-2<m\le 1$, there is $c(\ve,\theta)>0$ such that
\begin{equation}\label{3.1}
\big\| S_{1,m}(z)\big\|_{\ml(X_m,X_{m-\theta})}\le c(\ve,\theta) \vert z\vert^{-\theta/2},\quad z\in  \dot\Sigma_{\pi/2-\ve}.
\end{equation}
\end{prop}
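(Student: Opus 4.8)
The plan is to work directly with the explicit kernel $k_\kappa(z,x,r)$ in~\eqref{m1} and to establish the mapping estimate~\eqref{3.1} by an interpolation/duality argument based on two elementary endpoint bounds, after which the semigroup property and analyticity follow from $\mathcal{S}$ being a semigroup of integral operators on the scale $L_{p,m}$ (cf.\ \Cref{T0}) together with a density/continuity argument. First I would reduce~\eqref{3.1} to the case of real positive $z=t>0$ and then recover the sectorial estimate: writing $z=t e^{i\psi}$ with $|\psi|<\pi/2-\ve$, the Gaussian factor $\exp(-(x^2+r^2)/4z)$ has real part $-(x^2+r^2)\cos\psi/(4t)$ and the Bessel factor is controlled by~\eqref{In}, so the kernel for complex $z$ is pointwise dominated (up to constants depending on $\ve$) by the kernel for a comparable real argument; this is exactly the mechanism used for analytic Gaussian-type semigroups and should give~\eqref{3.1} on the sector once it holds on the half-line.

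For the real case, the key computation is a scaling identity: the change of variables $x=\sqrt t\, \xi$, $r=\sqrt t\, \rho$ shows that $\mathcal{S}(t)$ acts, after the isometry $X_m\to X_m$ induced by dilation, by a $t$-independent operator up to the prefactor $t^{-\theta/2}$ when one measures from $X_m$ into $X_{m-\theta}$. Concretely, $k_\kappa(t,x,r) = t^{-1}\, k_\kappa(1,x/\sqrt t, r/\sqrt t)$ in the appropriate homogeneous sense, so that $\|S_{1,m}(t)\|_{\ml(X_m,X_{m-\theta})} = t^{-\theta/2}\|S_{1,m}(1)\|_{\ml(X_m,X_{m-\theta})}$, and the whole problem collapses to showing that $\mathcal{S}(1)$ is bounded from $X_m$ to $X_{m-\theta}$ whenever $\theta+\kappa-2<m\le 1$, $\theta\ge 0$. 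This in turn is a Schur-type test: one must bound
\[
\sup_{r>0} r^{-m}\int_0^\infty |k_\kappa(1,x,r)|\, x^{m-\theta}\,\rd x
\qquad\text{and}\qquad
\sup_{x>0} x^{-(m-\theta)}\int_0^\infty |k_\kappa(1,x,r)|\, r^{m}\,\rd r,
\]
or, since we only need $X_m\to X_{m-\theta}$ for $L_1$-spaces (so boundedness is equivalent to a single one-sided bound on $\int |k_\kappa(1,x,r)|\,x^{m-\theta}\,\rd x/r^{m}$ being uniform in $r$, by Tonelli), really just the first supremum. Here~\eqref{i20} splits $I_{(1-\kappa)/2}(xr/2)$ into its small-argument behaviour $\sim (xr)^{(1-\kappa)/2}$ near $xr\le 1$ and its exponential growth $\sim (xr)^{-1/2}e^{xr/2}$ for $xr\ge1$; in the latter regime the Gaussian $\exp(-(x^2+r^2)/4)$ beats the exponential since $x^2+r^2-xr\ge (x^2+r^2)/2$, and one is left with Gaussian integrals, while in the former regime the integrand is a pure power times a Gaussian. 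The constraint $\theta+\kappa-2<m$ is precisely what makes the resulting power of $x$ integrable near $x=0$ (the exponent there, after collecting $x^{(1-\kappa)/2}\cdot x^{(1-\kappa)/2}\cdot x^{m-\theta}$, must exceed $-1$), and $m\le1$ plays its role through the behaviour of the $r$-integral / the positivity-preserving normalization.

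The main obstacle I anticipate is the careful bookkeeping of the two-regime split of $I_{(1-\kappa)/2}$ against the Gaussian, uniformly in $r$, so as to extract exactly the exponent $-\theta/2$ and the sharp range $\theta+\kappa-2<m\le1$ without losing an endpoint; the complex-$z$ domination via~\eqref{In} and the passage $|z|^{-\theta/2}$ is then routine, and positivity of $S_{1,m}(z)$ for $z>0$ is immediate from $k_\kappa(t,x,r)>0$ (with the sector obtained by analytic continuation). For the remaining qualitative claims — that $z\mapsto S_{1,m}(z)$ is analytic into $\ml(X_m)$ and satisfies the semigroup law, hence is a bounded analytic semigroup of angle $\pi/2$ — I would argue that for $f\in X_m\cap L_{p,m}$ with $p\in(1,\infty)$ chosen so that \Cref{T0} applies (possible since $\kappa-1<\frac{m+1}{p}<2$ holds for $p$ near $1$ when $\kappa-2<m\le1$), $\mathcal{S}(z)f$ is analytic and the semigroup identity $\mathcal{S}(z_1)\mathcal{S}(z_2)f=\mathcal{S}(z_1+z_2)f$ holds; then~\eqref{3.1} with $\theta=0$ gives a uniform bound $\|S_{1,m}(z)\|_{\ml(X_m)}\le c(\ve)$ on $\dot\Sigma_{\pi/2-\ve}$, so by density of $X_m\cap L_{p,m}$ in $X_m$ these properties extend to all of $X_m$, and the $\ml(X_m)$-valued analyticity follows from local boundedness plus weak (indeed strong on a dense set) analyticity via a standard Vitali/Montel argument. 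Boundedness on every proper subsector $\Sigma_{\pi/2-\ve}$ is precisely the statement that the semigroup is analytic of angle $\pi/2$.
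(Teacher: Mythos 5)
Your plan for the quantitative estimate~\eqref{3.1} is essentially the paper's: the sectorial kernel bound you propose to derive from~\eqref{In} and the real part of the Gaussian is exactly~\eqref{k11} (quoted from \cite[Proposition~2.8]{MNS_JDE22}), and your rescaling-plus-Tonelli/Schur argument is what \Cref{P11x} carries out (note the harmless slip: $k_\kappa(t,\sqrt{t}\xi,\sqrt{t}\rho)=t^{-1/2}k_\kappa(1,\xi,\rho)$, not $t^{-1}$; the operator norm scaling $t^{-\theta/2}$ comes out right once you also rescale the measure $x^{m}\,\rd x$). The semigroup law by density, positivity from $k_\kappa>0$, and analyticity in the open sector from local boundedness plus weak analyticity on a dense set are all fine in principle, although the paper instead differentiates the kernel in $z$ explicitly and verifies $\|z\frac{\rd}{\rd z}S_{1,m}(z)\|_{\ml(X_m)}\le c$ so as to invoke \cite[Theorem~II.4.6]{EnNa2000}.

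The genuine gap is strong continuity at the origin: you never establish $\lim_{z\to 0}\|S_{1,m}(z)f-f\|_{X_m}=0$, and without it $S_{1,m}$ is not a $C_0$-semigroup, has no generator, and cannot be called an analytic semigroup of angle $\pi/2$. This does \emph{not} follow from \Cref{T0} by density. The uniform bound $\|S_{1,m}(z)\|_{\ml(X_m)}\le c(\ve)$ reduces the claim to $f$ in a dense class, say bounded with compact support in $[a,b]\subset(0,\infty)$; but for such $f$ the convergence $S_{p,m}(z)f\to f$ in $L_{p,m}$ only controls $\int_a^b|S_{1,m}(z)f-f|x^m\,\rd x$ (via H\"older on the compact set), and says nothing about the mass that $S_{1,m}(z)f$ carries on $(0,a)$ and $(b,\infty)$ in the $L_1$-weighted norm. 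Showing that these two tail contributions, $J_0(z)$ and $J_\infty(z)$ in the paper's notation, tend to $0$ as $z\to 0$ is the longest part of the paper's proof (step~(ii)); it requires a genuinely quantitative use of the Gaussian off-diagonal decay in~\eqref{k11}, with a separate treatment of the cases $m\le 0$ and $m\in(0,1]$ near infinity and $m\ge 0$ and $m<0$ near the origin. Your proposal needs this additional argument (or an equivalent one) to be complete.
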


\begin{proof}
We divide the proof into several steps.
\begin{itemize}[leftmargin=*]
\item[\bf (i)] Let $\ve>0$ be arbitrary but fixed. Then, due to \cite[Proposition~2.8]{MNS_JDE22}, there are constants $C:=C(\ve)>0$ and $s:=s(\ve)>0$ such that, for $z\in  \dot\Sigma_{\pi/2-\ve}$ and $(x,r)\in (0,\infty)^2$, one has
\begin{equation}\label{k11}
0\le |k_\kappa (z,x,r)| \le \frac{C}{\sqrt{\vert z\vert}} \left(\frac{x}{\sqrt{\vert z\vert}}\wedge 1\right)^{1-\kappa}\left (\frac{r}{\sqrt{\vert z\vert}}\wedge 1\right)\,\exp\left(-\frac{\vert x-r\vert^2}{s \vert z\vert}\right).
\end{equation}
Therefore, given $\theta\ge 0$ with $\theta+\kappa-2<m\le 1$, we infer from~\eqref{m2} and Proposition~\ref{P11x} that
\begin{equation*}
\big\| S_{1,m}(z)f \big\|_{X_{m-\theta}} \le \int_0^\infty  \int_0^\infty  k_\kappa(z,x,r) \, \vert f(r)\vert \,\rd r\, x^{m-\theta}\,\rd x \le c\,\vert z\vert^{-\theta/2}\, \| f \|_{X_m}
\end{equation*} 
for $f\in X_m$ and  $z\in  \dot\Sigma_{\pi/2-\ve}$, which establishes estimate~\eqref{3.1}.

\item[\bf (ii)] Next we turn to the strong continuity of $S_{1,m}$ and prove it first for a function $f\in X_m\cap L_\infty((0,\infty))$ with compact support in $[a,b]$, where $0<a<1<b$, in which case we have that
\begin{equation}\label{note}
	\big (S_{1,m}(z) f \big)(x) = \int_a^b k_\kappa(z,x,r)\, f(r)\,\rd r, \qquad x\in (0,\infty).
\end{equation}
Consider $z\in \dot\Sigma_{\pi/2-\ve}$ satisfying
\begin{equation}
	|z| \le 1 \wedge \frac{a^2}{4}. \label{z02}
\end{equation}
Then, for $x>b$ and $r\in [a,b]$, one has $x>r>\sqrt{|z|}$ and thus, by~\eqref{k11}, 
\begin{align*}
	J_\infty(z) & := \int_b^\infty \left| \int_a^b k_\kappa(z,x,r)\, f(r)\,\rd r \right| \, x^m\,\rd x \\
	& \le \frac{C}{\sqrt{|z|}} \int_b^\infty \int_a^b \exp\left\{ - \frac{(x-r)^2}{s|z|} \right\} |f(r)| x^m\, \rd r\rd x \\
	& = C \int_a^b |f(r)| \int_{(b-r)/\sqrt{|z|}}^\infty \left( r + y \sqrt{|z|} \right)^m e^{-y^2/s}\, \rd y\rd r.
\end{align*}
Either $m\le 0$ and $\left( r + y \sqrt{|z|} \right)^m \le r^m$, so that
\begin{subequations}\label{z03}
\begin{equation}
	J_\infty(z) \le C \int_a^b |f(r)| r^m \int_{(b-r)/\sqrt{|z|}}^\infty e^{-y^2/s}\, \rd y\rd r. \label{z03a}
\end{equation}
Or $m\in (0,1]$ and 
\begin{equation*}
	\left( r + y \sqrt{|z|} \right)^m \le r^m + y^m \sqrt{|z|}^m \le r^m \left( 1 +y^m \right)
\end{equation*}
by~\eqref{z02}, which implies that
\begin{align}
	J_\infty(z) & \le C \int_a^b |f(r)| r^m \int_{(b-r)/\sqrt{|z|}}^\infty \big( 1 + y^m \big) e^{-y^2/s}\, \rd y\rd r \nonumber\\
	& \le C \int_a^b |f(r)| r^m \int_{(b-r)/\sqrt{|z|}}^\infty \left( 1 + \sup_{x>0}\left\{ x^m e^{-x^2/2s} \right\} \right) e^{-y^2/2s}\, \rd y\rd r \nonumber\\
	& \le C  \int_a^b |f(r)| r^m \int_{(b-r)/\sqrt{|z|}}^\infty e^{-y^2/2s}\, \rd y\rd r. \label{z03b}
\end{align}
\end{subequations}
Introducing
\begin{equation*}
	\Lambda_\infty(\xi) := \int_a^b |f(r)| r^m \int_{(b-r)/\xi}^\infty e^{-y^2/2s}\, \rd y\rd r, \qquad \xi>0,
\end{equation*}
we deduce from~\eqref{z03} that
\begin{equation}
	J_\infty(z) \le C \Lambda_\infty\big(\sqrt{|z|}\big). \label{z04}
\end{equation}
Now, for $\xi\in \big(0,(b-a)^2\big)$,
\begin{align*}
	\Lambda_\infty(\xi) & = \int_{a}^{b-\sqrt{\xi}} |f(r)| r^m \int_{(b-r)/\xi}^\infty e^{-y^2/2s}\, \rd y\rd r \\
	& \qquad + \int_{b-\sqrt{\xi}}^b |f(r)| r^m \int_{(b-r)/\xi}^\infty e^{-y^2/2s}\, \rd y\rd r \\
	& \le \int_{a}^{b-\sqrt{\xi}} |f(r)| r^m \int_{1/\sqrt{\xi}}^\infty e^{-y^2/2s}\, \rd y\rd r \\
	& \qquad + \int_{b-\sqrt{\xi}}^b |f(r)| r^m \int_0^\infty e^{-y^2/2s}\, \rd y\rd r \\
	& \le \|f\|_{X_m} \int_{1/\sqrt{\xi}}^\infty e^{-y^2/2s}\, \rd y + C \int_{b-\sqrt{\xi}}^b |f(r)| r^m\,\rd r,
\end{align*}
and we infer from the integrability properties of $f\in X_m$ that
\begin{equation*}
	\lim_{\xi\to 0} \Lambda_\infty(\xi) = 0.
\end{equation*}
Consequently,
\begin{equation}
	\lim_{z\to 0} J_\infty(z) = 0. \label{z06}
\end{equation}

Next, for $r\in [a,b]$ and $z\in \dot\Sigma_{\pi/2-\ve}$ satisfying~\eqref{z02}, one has $r>\sqrt{|z|}$ as well as $\sqrt{|z|}\in [0,a/2]$, so that, by~\eqref{k11},
\begin{align*}
	J_0(z) &  :=  \int_0^a \left| \int_a^b k_\kappa(z,x,r)\, f(r)\,\rd r \right| \, x^m\,\rd x \\
	& \le \frac{C}{\sqrt{|z|}} \int_0^a \int_a^b \left( \frac{x}{\sqrt{|z|}} \wedge 1 \right)^{1-\kappa} \exp\left\{ - \frac{(x-r)^2}{s |z|} \right\} |f(r)| x^m\, \rd r\rd x \\
	& = \frac{C}{|z|^{(2-\kappa)/2}} \int_0^{\sqrt{|z|}} \int_a^b x^{1-\kappa+m} \exp\left\{ - \frac{(x-r)^2}{s |z|} \right\} |f(r)|\, \rd r\rd x \\
	& \qquad + \frac{C}{\sqrt{|z|}} \int_{\sqrt{|z|}}^a \int_a^b x^{m} \exp\left\{ - \frac{(x-r)^2}{s |z|} \right\} |f(r)|\, \rd r\rd x \\
	& \le  \frac{C}{|z|^{(2-\kappa)/2}} \int_a^b |f(r)| \int_0^{\sqrt{|z|}} x^{1-\kappa+m} \exp\left\{ - \frac{r^2}{4s |z|} \right\}\, \rd x\rd r \\
	& \qquad + \frac{C}{\sqrt{|z|}} \int_a^b |f(r)|  \int_{\sqrt{|z|}}^a x^{m} \exp\left\{ - \frac{(x-r)^2}{s |z|} \right\}\, \rd x\rd r.
\end{align*}
Since $m+2-\kappa>0$, we further obtain
\begin{align*}
	J_0(z) & \le  C|z|^{m/2} \int_a^b |f(r)| \exp\left\{ - \frac{r^2}{4s |z|} \right\}\, \rd r \\
	& \qquad + C \int_a^b |f(r)| \int_{(r-a)/\sqrt{|z|}}^{(r-\sqrt{|z|})/\sqrt{|z|}} \big( r - y \sqrt{|z|} \big)^m e^{-y^2/s}\, \rd y\rd r \\
	& = C \int_a^b r^m |f(r)| \left( \frac{\sqrt{|z|}}{r} \right)^m \exp\left\{ - \frac{r^2}{4s |z|} \right\}\, \rd r \\
	& \qquad + C \int_a^b r^m |f(r)| \int_{(r-a)/\sqrt{|z|}}^{(r-\sqrt{|z|})/\sqrt{|z|}} \left( \frac{r - y \sqrt{|z|}}{r} \right)^m e^{-y^2/s}\, \rd y\rd r.
\end{align*}
Either $m\in [0,1]$ and, since $r>a>\sqrt{|z|}$ by~\eqref{z02},
\begin{equation*}
	\left( \frac{\sqrt{|z|}}{r} \right)^m \le 1, \qquad \left( \frac{r - y \sqrt{|z|}}{r} \right)^m \le 1,
\end{equation*}
so that
\begin{subequations}\label{z07}
\begin{equation}
	\begin{split}
		J_0(z) & \le C \int_a^b r^m |f(r)|  \exp\left\{ - \frac{r^2}{4s |z|} \right\}\, \rd r \\
		& \qquad + C \int_a^b r^m |f(r)| \int_{(r-a)/\sqrt{|z|}}^{(r-\sqrt{|z|})/\sqrt{|z|}} e^{-y^2/s}\, \rd y\rd r.
	\end{split}\label{z07a}
\end{equation}
Or $m<0$ and
\begin{equation*}
	\left( \frac{\sqrt{|z|}}{r} \right)^m \exp\left\{ - \frac{r^2}{4s |z|} \right\} \le \sup_{y>0}\left\{ y^{-m} e^{-y^2/8s}\right\} \exp\left\{ - \frac{r^2}{8s |z|} \right\},
\end{equation*}
while, for $y\sqrt{|z|}\in\Big(r-a\, ,\,r - \sqrt{|z|}\Big)$,
\begin{align*}
	 \left( \frac{r - y \sqrt{|z|}}{r} \right)^m & =  \left( \frac{r - y \sqrt{|z|} + y \sqrt{|z|}}{r-y \sqrt{|z|}} \right)^{-m} \le 2^{-m} \left[ 1 + \left( \frac{y\sqrt{|z|}}{r-y \sqrt{|z|}} \right)^{-m} \right] \\
	 & \le 2^{-m} \left[ 1 + \big( y\sqrt{|z|} \big)^{-m} |z|^{m/2} \right] = 2^{-m} \big( 1 + y^{-m} \big).
\end{align*}
Consequently,
\begin{align}
	J_0(z) & \le C \int_a^b r^m |f(r)| \exp\left\{ - \frac{r^2}{8s |z|} \right\}\, \rd r \nonumber\\
	& \qquad + C \int_a^b r^m |f(r)| \int_{(r-a)/\sqrt{|z|}}^{(r-\sqrt{|z|})/\sqrt{|z|}} \big( 1 + y^{-m} \big) e^{-y^2/s}\, \rd y\rd r \nonumber\\
	& \le C \int_a^b r^m |f(r)| \exp\left\{ - \frac{r^2}{8s |z|} \right\}\, \rd r \nonumber\\
	& \qquad + C \int_a^b r^m |f(r)| \int_{(r-a)/\sqrt{|z|}}^{(r-\sqrt{|z|})/\sqrt{|z|}} e^{-y^2/2s}\, \rd y\rd r. \label{z07b}
\end{align}
\end{subequations}
Introducing
\begin{align*}
	\Lambda_0(\xi) & := \int_a^b r^m |f(r)| \exp\left\{ - \frac{r^2}{8s \xi^2} \right\}\, \rd r + \int_a^b r^m |f(r)| \int_{(r-a)/\xi}^{(r-\xi)/\xi} e^{-y^2/2s}\, \rd y\rd r
\end{align*}
for $\xi>0$, we have shown that
\begin{equation}
	J_0(z) \le C \Lambda_0(\sqrt{|z|}). \label{z08}
\end{equation}
Now, it follows from the integrability properties of $f\in X_m$ and from Lebesgue's dominated convergence theorem that
\begin{equation*}
	\lim_{\xi\to 0} \int_a^b r^m |f(r)| \exp\left\{ - \frac{r^2}{8s \xi^2} \right\}\, \rd r = 0.
\end{equation*}
Furthermore, for $\xi\in \big(0, (b-a)^2\big)$,
\begin{align*}
	\int_a^b r^m |f(r)| \int_{(r-a)/\xi}^{(r-\xi)/\xi} e^{-y^2/2s}\, \rd y\rd r & \le \int_a^{a+\sqrt{\xi}} r^m |f(r)|\, \rd r \int_0^\infty e^{-y^2/2s}\, \rd y \\
	& \qquad + \int_{a+\sqrt{\xi}}^b r^m |f(r)|\, \rd r \int_{\xi^{-1/2}}^\infty e^{-y^2/2s}\,\rd y 
\end{align*}
and, using again the integrability properties of $f\in X_m$ and Lebesgue's dominated convergence theorem, we conclude that
\begin{equation*}
	\lim_{\xi\to 0} \int_a^b r^m |f(r)| \int_{(r-a)/\xi}^{(r-\xi)/\xi} e^{-y^2/2s}\, \rd y\rd r = 0.
\end{equation*}
Consequently, 
\begin{equation*}
	\lim_{\xi\to 0} \Lambda_0(\xi) = 0,
\end{equation*}
which implies, together with~\eqref{z08}, that
\begin{equation}
	\lim_{z\to 0} J_0(z) = 0. \label{z09}
\end{equation}

We now choose $p\in (1,\infty)$ wih $\kappa-1<\frac{m+1}{p}<2$. Since $f\in X_m\cap L_\infty((0,\infty))\subset L_{p,m}$ and using~\eqref{note}, we obtain that
\begin{align*}
	\big\| S_{1,m}(z)f - f\big\|_{X_m} & = \int_0^a \left|\big(S_{1,m}(z)f\big)(x)\right| x^m\, \rd x + \int_a^b \left|\big(S_{1,m}(z)f\big)(x) - f(x)\right| x^m\, \rd x \\
	& \qquad + \int_b^\infty \left|\big(S_{1,m}(z)f\big)(x)\right| x^m\, \rd x \\
	& \le J_0(z) + J_\infty(z) + \int_a^b \left|\big(S_{p,m}(z)f\big)(x) - f(x)\right| x^m\, \rd x \\
	& \le J_0(z) + J_\infty(z) + C\,\left\|S_{p,m}(z)f - f\right\|_{L_{p,m}}\,.
\end{align*}
As strong continuity of $S_{p,m}$ in $L_{p,m}$ follows from Theorem~\ref{T0}, we infer from~\eqref{z06}, \eqref{z09}, and the above inequality that
\begin{equation}
	\lim_{z\to 0} \| S_{1,m}(z)f - f\|_{X_m} = 0 \label{z10}
\end{equation}
for any $f\in X_m\cap L_\infty((0,\infty))$ with compact support in $(0,\infty)$. It only remains to combine the already established boundedness property~\eqref{3.1} (with $\theta=0$) with a density argument in order to conclude that~\eqref{z10} is valid for all $f\in X_m$.

\item[{\bf (iii)}] As for the analyticity of~$S_{1,m}$ we note from \Cref{Bessel} that, for $z\in \C$ with $\mathrm{Re}\, z>0$ and $(x,r)\in (0,\infty)^2$, 
\begin{align*}
z \partial_zk_\kappa(z,x,r)&=-\frac{1}{2z}r^\kappa (xr)^{\frac{1-\kappa}{2}} \exp\left(-\frac{x^2+r^2}{4z}\right) I_{\frac{1-\kappa}{2}}\left(\frac{xr}{2z}\right)\\&\phantom{=}+
\frac{1}{2z}r^\kappa (xr)^{\frac{1-\kappa}{2}}\frac{x^2+r^2}{4z}\exp\left(-\frac{x^2+r^2}{4z}\right) I_{\frac{1-\kappa}{2}}\left(\frac{xr}{2z}\right)\\&\phantom{=}-
\frac{1}{2z}r^\kappa (xr)^{\frac{1-\kappa}{2}}\frac{xr}{2z}\exp\left(-\frac{x^2+r^2}{4z}\right) I'_{\frac{1-\kappa}{2}}\left(\frac{xr}{2z}\right)\\
&=k_\kappa(z,x,r)\left[ \frac{x^2+r^2}{4z}-\frac{3-\kappa}{2} \right]\\
&\phantom{=}-\frac{xr}{4z^2}r^\kappa (xr)^{\frac{1-\kappa}{2}} \exp\left(-\frac{x^2+r^2}{4z}\right)I_{\frac{3-\kappa}{2}}\left(\frac{xr}{2z}\right) \\
& = k_\kappa(z,x,r)\left[ \frac{x^2+r^2}{4z}-\frac{3-\kappa}{2}  - \frac{xr}{2z} \frac{I_{\frac{3-\kappa}{2}}}{I_{\frac{1-\kappa}{2}}} \left( \frac{xr}{2z} \right) \right] \\
& = k_\kappa(z,x,r)\left[ \frac{(x-r)^2}{4z}-\frac{3-\kappa}{2}  + \frac{xr}{2z} \left( 1 - \frac{I_{\frac{3-\kappa}{2}}}{I_{\frac{1-\kappa}{2}}} \left( \frac{xr}{2z} \right) \right) \right].
\end{align*}
We now argue as in the proof of \cite[Proposition~2.9]{MNS_JDE22} and use the validity of
\begin{equation*}
    \left| 1 - \frac{I_{\frac{3-\kappa}{2}}}{I_{\frac{1-\kappa}{2}}}(\xi) \right| \le C(\varepsilon) \left( 1 \wedge \frac{1}{|\xi|} \right), \quad \xi\in \dot{\Sigma}_{\pi/2-\ve}\,,
\end{equation*}
along with~\eqref{k11} and \cite[Lemma~10.1]{MNS_JDE22}, to obtain that, for $z\in \dot{\Sigma}_{\pi/2-\ve}$ and $(x,r)\in (0,\infty)^2$,
\begin{align*}
   \big| z \partial_z k_\kappa(z,x,r) \big| & \le |k_\kappa(z,x,r)| \left[ \left| \frac{3-\kappa}{2} \right| + s \exp\left(\frac{(x-r)^2}{4s \vert z\vert}\right) + C \left( 1 \wedge \frac{xr}{2|z|} \right) \right] \\
   & \le C |k_\kappa(z,x,r)| \left[ 1 + \left( 1 \wedge \frac{x}{\sqrt{|z|}} \right) \left( 1 \wedge \frac{r}{\sqrt{|z|}} \right) \right] \exp\left(\frac{(x-r)^2}{4s |z|}\right) \\
   & \le \frac{C}{\sqrt{|z|}} \left( 1 \wedge \frac{x}{\sqrt{|z|}} \right)^{1-\kappa} \left( 1 \wedge \frac{r}{\sqrt{|z|}} \right)\,\exp\left(-\frac{(x-r)^2}{2s |z|}\right) \\
   & \phantom{=} + \frac{C}{\sqrt{|z|}} \left( 1 \wedge \frac{x}{\sqrt{|z|}} \right)^{2-\kappa} \left( 1 \wedge \frac{r}{\sqrt{|z|}} \right)^2\,\exp\left(-\frac{(x-r)^2}{2s |z|}\right) \\
   & \le C \Big[ q_{2s,\kappa-1,-1}(|z|,x,r) + q_{2s,\kappa-2,-2}(|z|,x,r) \Big],
\end{align*}
noticing that $q_{s,\alpha,\beta}$ is defined in~\eqref{k11x}. Since 
\begin{equation*}
    \kappa-2 < \kappa - 1 < m+1 \le 2 < 3,
\end{equation*}
we infer from \Cref{P11x} (with $(\alpha,\beta,\theta)\in \{(\kappa-2,3,0),(\kappa-1,2,0)\}$) that, for $f\in X_m$,
\begin{equation*}
    S_{1,m}f:\dot\Sigma_{\pi/2-\ve}\rightarrow X_m,\qquad z\mapsto S_{1,m}(z)f
\end{equation*}
is differentiable and that
\begin{equation}\label{key}
\left\| z\frac{\rd }{\rd z} S_{1,m}(z)f\right\|_{X_m}=  \left\| z \int_0^\infty \partial_z k_\kappa(z,\cdot,r) f(r)\,\rd r\right\|_{X_m}\le c\|f\|_{X_m},\quad z\in \dot\Sigma_{\pi/2-\ve}\,.
\end{equation}
Therefore, the semigroup $S_{1,m}$ is analytic with angle $\pi/2$ due to~\cite[Theorem~II.4.6]{EnNa2000}, as~\eqref{key} shows that we can apply this result to any ray in the sector $\Sigma_{\pi/2-\ve}$. This concludes the proof. 
\end{itemize}
\end{proof}

For $\kappa<1$ and $\kappa-2<m\le 1$, we denote the generator of the semigroup $S_{1,m}$ on $X_m$ by $A_{1,m}$ and turn our attention to identifying the domain of $A_{1,m}$ in the form stated in \Cref{T7}. This proves to be quite involved. We begin by verifying that $A_{1,m}$ has the expected differential form and by establishing an integral representation of its resolvent.

\begin{lem}\label{L100}
	Let $\kappa<1$ and $\kappa-2<m\le 1$. If $f\in \dom(A_{1,m})$ and $\lambda>0$, then $g:= \lambda f - A_{1,m} f$ belongs to $X_m$ and
	\begin{align}
		& A_{1,m}f = \Gk f \;\;\text{ in }\;\; \mathcal{D}'\bigl((0,\infty)\bigr), \label{x001} \\
		& f(x) = \int_0^\infty G_\kappa(\lambda,x,r)\, g(r)\, r^\kappa\,\rd r, \qquad x>0, \label{x002}
	\end{align}
    where $G_\kappa$ is defined in~\eqref{m4}.
\end{lem}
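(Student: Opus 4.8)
The plan is to exploit the identity $S_{1,m} = \mathcal{S}|_{X_m}$ together with the already-known facts about $S_{p,m}$ from \Cref{T0}, deducing the statements for $p=1$ by comparison with some $p\in(1,\infty)$ for which $\kappa-1<\frac{m+1}{p}<2$. Such a $p$ exists whenever $\kappa-2<m\le 1$: one needs $\frac{m+1}{p}<2$, i.e.\ $p>\frac{m+1}{2}$, and $\frac{m+1}{p}>\kappa-1$; since $m+1>\kappa-1$ the choice $p$ close to $1$ (and $>1$) works when $\kappa-1<m+1$, i.e.\ always here, after possibly also requiring $p>\tfrac{m+1}{2}$, which is compatible. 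Fix such a $p$.

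First I would prove \eqref{x001}. Let $f\in\dom(A_{1,m})$. Since $X_m\cap L_\infty$ with compact support is dense in $X_m$ and $S_{1,m}(t)f\to f$ in $X_m$, one can find $f_j\to f$ in $X_m$ with $f_j\in\dom(A_{1,m})\cap\dom(B_{p,m})$ — for instance $f_j := j\int_0^{1/j} S_{1,m}(t) g_j\,\rd t$ for suitable $g_j$, or more simply note that for any $g\in X_m\cap L_{p,m}$ the resolvent element $(\lambda-A_{1,m})^{-1}g$ and $(\lambda-B_{p,m})^{-1}g$ agree because both equal $\int_0^\infty e^{-\lambda t}\mathcal{S}(t)g\,\rd t$ pointwise (the integral operators coincide). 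The cleanest route: for $f\in\dom(A_{1,m})$ and $\lambda>0$, set $g:=\lambda f - A_{1,m}f\in X_m$, so $f = \int_0^\infty e^{-\lambda t} S_{1,m}(t) g\,\rd t = \int_0^\infty e^{-\lambda t}\mathcal{S}(t)g\,\rd t$ as an $X_m$-valued Bochner integral. Using the explicit kernel $k_\kappa$ and the estimate \eqref{k11}, one checks by Fubini that this equals $\int_0^\infty G_\kappa(\lambda,x,r)\,g(r)\,r^\kappa\,\rd r$ pointwise a.e.\ in $x$ — this is exactly the classical Laplace-transform identity $\int_0^\infty e^{-\lambda t} k_\kappa(t,x,r)\,\rd t = G_\kappa(\lambda,x,r)\,r^\kappa$ relating the heat kernel of the Bessel operator to its Green function, which follows from \eqref{m1}, \eqref{m4}, and a standard Bessel-function Laplace transform identity. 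That gives \eqref{x002} directly.

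With \eqref{x002} in hand, \eqref{x001} follows by noting that $G_\kappa(\lambda,\cdot,\cdot)$ is the Green function for $\lambda-\Gk$ with the chosen boundary behaviour: for any test function $\varphi\in\mathcal{D}((0,\infty))$, one computes $\langle(\lambda-\Gk)^* \varphi, f\rangle$ — where $(\lambda-\Gk)^*$ acts on $\varphi$ — by substituting \eqref{x002}, using Fubini (justified by the decay of $G_\kappa$ in $r$ from \eqref{i20}, \eqref{i21} and the fact that $g\in X_m$, $\varphi$ compactly supported), and the fact that $r\mapsto G_\kappa(\lambda,x,r)r^\kappa$ is, for fixed $x$, a weak solution of $(\lambda-\Gk)G = \delta_x$ — equivalently, $(I_\nu,K_\nu)$ solve \eqref{eq_mbessel}, so $r^\kappa(\lambda-\Gk)_r[G_\kappa(\lambda,x,r)] = -\delta_x(r)$ in $\mathcal{D}'$, with the jump in the $r$-derivative at $r=x$ given by the Wronskian of $I_\nu,K_\nu$, which equals $-1/z$ and produces exactly the normalization in \eqref{m4}. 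Hence $\langle \varphi, \lambda f - \Gk f\rangle = \langle\varphi, g\rangle$ in $\mathcal{D}'$, i.e.\ $\Gk f = \lambda f - g = A_{1,m}f$ in $\mathcal{D}'((0,\infty))$.

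The main obstacle is the rigorous passage from the $X_m$-valued Bochner integral $\int_0^\infty e^{-\lambda t}\mathcal{S}(t)g\,\rd t$ to the pointwise-in-$x$ double-integral formula, i.e.\ interchanging the $t$-integration with the $r$-integration inside $\mathcal{S}(t)$; this requires an absolute-integrability bound of the form $\int_0^\infty\int_0^\infty e^{-\lambda t}|k_\kappa(t,x,r)|\,|g(r)|\,\rd r\,\rd t<\infty$ for a.e.\ $x$, for which the pointwise kernel bound \eqref{k11} (with $|z|=t$) and the condition $\kappa-2<m$ are exactly what is needed, via an application of \Cref{P11x}. The remaining delicate point is computing $\int_0^\infty e^{-\lambda t} k_\kappa(t,x,r)\,\rd t$; here I would either invoke the classical Hankel/Laplace transform identity $\int_0^\infty \frac{1}{2t}\exp\!\big(-\frac{x^2+r^2}{4t}\big) I_\nu\!\big(\frac{xr}{2t}\big)\,\rd t = I_\nu(\sqrt{\lambda}\,(x\wedge r))\,K_\nu(\sqrt{\lambda}\,(x\vee r))$ (with $\nu=\frac{1-\kappa}{2}$), or, to stay self-contained, verify \eqref{x002} by checking that its right-hand side $h(x)$ satisfies $(\lambda-\Gk)h = g$ in $\mathcal{D}'$ together with the same integrability and boundary properties that characterize $\dom(A_{1,m})$, and then use injectivity of $\lambda-A_{1,m}$ (which holds since $A_{1,m}$ generates a bounded semigroup, so $0$ is not an eigenvalue of $\lambda-A_{1,m}$ for $\lambda>0$) to conclude $h=f$. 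I expect the second, "verify and uniquely identify" route to be the one actually used, deferring the finer boundary-condition bookkeeping to the subsequent lemmas that complete the proof of \Cref{T7}.
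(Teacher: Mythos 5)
Your primary route (Laplace-transform the heat kernel to get \eqref{x002}, then verify \eqref{x001} by the Sturm--Liouville/Wronskian computation showing $G_\kappa(\lambda,\cdot,\cdot)$ is the Green function of $\lambda-\Gk$) is viable, but it is genuinely different from what the paper does. The paper never computes $\int_0^\infty e^{-\lambda t}k_\kappa(t,x,r)\,\rd t$ and never touches the jump condition: it tests $f=\int_0^\infty e^{-\lambda t}S_{1,m}(t)g\,\rd t$ against $x^\kappa\vartheta(x)$ for $\vartheta\in\mathcal{D}\bigl((0,\infty)\bigr)$, uses the kernel symmetry \eqref{sym} to move the semigroup onto $\vartheta$, and then invokes \Cref{T0} \emph{applied to the test function} $\vartheta\in L_{p,m}$ (for a suitable $p>1$), so that both the resolvent formula \eqref{m3} and the identity $(\lambda-B_{p,m})^{-1}(\lambda-B_{p,m})\vartheta=\vartheta$ are imported wholesale from \cite{MNS_JDE22}; \eqref{x002} and \eqref{x001} then drop out by the symmetry of $G_\kappa$ and by varying $\vartheta$. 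What your approach buys is self-containedness modulo the classical identity $\int_0^\infty e^{-\lambda t}\tfrac{1}{2t}e^{-(x^2+r^2)/4t}I_\nu(\tfrac{xr}{2t})\,\rd t=I_\nu(\sqrt{\lambda}(x\wedge r))K_\nu(\sqrt{\lambda}(x\vee r))$ (note the $e^{-\lambda t}$ is missing in your displayed version --- without it the identity is false); what the paper's duality trick buys is the complete avoidance of that identity and of the Fubini bookkeeping for the $(t,r)$-interchange, at the price of leaning on \Cref{T0}. Two caveats on your write-up: first, your guess that the ``verify and uniquely identify'' fallback is the one actually used is wrong, and that fallback has a real snag --- identifying $h=f$ requires a uniqueness statement for $\lambda v-\Gk v=0$ (\Cref{L101}) which, when $m=1$, needs a vanishing boundary condition at $x=0$ that is not available at this stage (it is only established later in \Cref{P102}, \emph{using} the present lemma); second, your opening remark that $(\lambda-A_{1,m})^{-1}g=(\lambda-B_{p,m})^{-1}g$ for $g\in X_m\cap L_{p,m}$ is correct but does not by itself reach general $g\in X_m$, which is precisely why the paper dualizes onto $\vartheta$ rather than approximating $f$.
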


\begin{proof}
	Let $\vartheta\in \mathcal{D}\bigl((0,\infty)\bigr)$ and fix $p>1$ such that $\kappa-1 < (m+1)/p<2$. Since $S_{1,m}$ is bounded, it holds that $\lambda\in \rho(A_{1,m})$ and hence
	\begin{equation}\label{Ay}
		f = \int_0^\infty e^{-\lambda t}\, S_{1,m}(t)g\,\rd t.
	\end{equation}
	Thus, we infer from~\eqref{M1}, \eqref{sym},  Fubini's theorem, and Theorem~\ref{T0} that
	\begin{align*}
		\int_0^\infty f(x)\, \vartheta(x)\, x^\kappa\,\rd x & = \int_0^\infty x^\kappa\, \vartheta(x)\, \int_0^\infty e^{-\lambda t}\, \big[ S_{1,m}(t)g \big](x)\,\rd t\,\rd x \\
		& = \int_0^\infty e^{-\lambda t}  \int_0^\infty x^\kappa\, \vartheta(x)\, \big[ S_{1,m}(t)g \big](x)\,\rd x\,\rd t \\
		& = \int_0^\infty e^{-\lambda t}  \int_0^\infty \int_0^\infty x^\kappa\, k_\kappa(\lambda,x,r)\, g(r)\, \vartheta(x)\, \rd r\,\rd x\,\rd t \\
		& = \int_0^\infty e^{-\lambda t}  \int_0^\infty \int_0^\infty r^\kappa\, k_\kappa(\lambda,r,x)\, g(r)\, \vartheta(x)\, \rd r\,\rd x\,\rd t \\
		& = \int_0^\infty e^{-\lambda t}  \int_0^\infty r^\kappa\, g(r)\, \big[ S_{p,m}(t)\vartheta \big](r)\,\rd r\,\rd t \\
		& = \int_0^\infty r^\kappa\, g(r)\, \int_0^\infty e^{-\lambda t}\, \big[ S_{p,m}(t)\vartheta \big](r)\,\rd t\,\rd r \\
		& = \int_0^\infty r^\kappa\, g(r)\, \big[ \big( \lambda-B_{p,m}\big)^{-1}\vartheta \big](r)\,\rd r \\
		& = \int_0^\infty r^\kappa\, g(r)\, \int_0^\infty G_\kappa(\lambda,r,x)\, \vartheta(x)\, x^\kappa \, \rd x\,\rd r \\
		& = \int_0^\infty x^\kappa\, \vartheta(x)\, \int_0^\infty G_\kappa(\lambda,x,r)\, g(r)\, r^\kappa \, \rd r\,\rd x. 
	\end{align*}
	As this identity is valid for any $\vartheta\in \mathcal{D}\bigl((0,\infty)\bigr)$, we arrive at~\eqref{x002}.
	
	Similarly, since $\big(\lambda-B_{p,m}\big)\vartheta$ belongs to $\mathcal{D}\bigl((0,\infty)\bigr)$, it follows from~\eqref{M1} and~\eqref{sym}, along with Fubini's theorem, that
	\begin{align*}
		& \int_0^\infty x^\kappa\, f(x)\, \big[ \big(\lambda-B_{p,m}\big)\vartheta \big](x)\,\rd x \\
		& \qquad = \int_0^\infty e^{-\lambda t}  \int_0^\infty r^\kappa\, g(r)\, \big[ S_{p,m}(t)\big(\lambda-B_{p,m}\big)\vartheta \big](r)\,\rd r\,\rd t \\
		& \qquad = \int_0^\infty r^\kappa\, g(r) \int_0^\infty e^{-\lambda t} \, \big[ S_{p,m}(t)\big(\lambda-B_{p,m}\big)\vartheta \big](r)\,\rd t\,\rd r \\
		& \qquad = \int_0^\infty r^\kappa\, g(r)\, \big[ \big( \lambda - B_{p,m})^{-1}\big(\lambda-B_{p,m}\big)\vartheta \big](r)\,\rd r \\
		& \qquad = \int_0^\infty r^\kappa\, g(r) \,\vartheta(r)\,\rd r = \int_0^\infty r^\kappa\, \vartheta(r)\, \big[ (\lambda - A_{1,m})f \big](r)\,\rd r.
	\end{align*}
	Consequently, by \Cref{T0}, it holds that
	\begin{equation*}
		\int_0^\infty x^\kappa\, f(x)\, \Gk\vartheta(x)\,\rd x = \int_0^\infty x^\kappa\, \vartheta(x)\, A_{1,m}f(x)\,\rd x,
	\end{equation*}
	from which we deduce~\eqref{x001}.
\end{proof}

Next we identify $\dom(A_{1,m})$. The following lemma is needed.

\begin{lem}\label{L101}
	Let $\kappa<1$. Let $f\in X_n$ with 
	\begin{equation}
		\text{ either }\; n\le -1 \;\text{ or }\; n>-1 \;\text{ and }\; \lim_{x\to 0} \frac{1}{x} \int_0^x f(z)\,\rd z = 0 \label{x005},
	\end{equation}
	and such that $f - \Gk f = 0$ in $(0,\infty)$. Then $f\equiv 0$.
\end{lem}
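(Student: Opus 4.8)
The plan is to solve the ODE $f - \Gk f = 0$ explicitly on $(0,\infty)$ and then use the integrability and boundary conditions to eliminate both fundamental solutions. Writing $\Gk f = f'' + \tfrac{\kappa}{x} f'$, the equation $f - \Gk f = 0$ is exactly the modified Bessel equation in disguise: with the standard substitution $f(x) = x^{(1-\kappa)/2} g(x)$ one checks (cf.~\Cref{Bessel} and the form of $G_\kappa$ in~\eqref{m4}) that a fundamental system for $f - \Gk f = 0$ on $(0,\infty)$ is given by
\begin{equation*}
	x \mapsto x^{(1-\kappa)/2} I_{(1-\kappa)/2}(x) \qquad\text{and}\qquad x \mapsto x^{(1-\kappa)/2} K_{(1-\kappa)/2}(x).
\end{equation*}
Hence $f = c_1\, x^{(1-\kappa)/2} I_{(1-\kappa)/2}(x) + c_2\, x^{(1-\kappa)/2} K_{(1-\kappa)/2}(x)$ for some constants $c_1, c_2 \in \C$, and it remains to show $c_1 = c_2 = 0$.

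First I would handle the behaviour at $+\infty$. By the asymptotics~\eqref{mbpc}, $x^{(1-\kappa)/2} I_{(1-\kappa)/2}(x)$ grows like $x^{-\kappa/2} e^x$, so the weighted integral $\int_1^\infty |f| x^n\,\rd x$ forces $c_1 = 0$ regardless of $n$, since $x^{(1-\kappa)/2} K_{(1-\kappa)/2}(x)$ decays exponentially and $f \in X_n$. Thus $f = c_2\, x^{(1-\kappa)/2} K_{(1-\kappa)/2}(x)$, and I must show $c_2 = 0$ using the condition~\eqref{x005} near $x = 0$. By~\eqref{mbpb}, as $x \to 0$ one has $x^{(1-\kappa)/2} K_{(1-\kappa)/2}(x) \sim C\, x^{(1-\kappa)/2} x^{-(1-\kappa)/2} = C$ when $\kappa \ne 1$ (and the logarithmic case is excluded since $(1-\kappa)/2 \ne 0$ for $\kappa < 1$); more precisely $f(x) \to c_2 C$ with $C = 2^{(1-\kappa)/2 - 1}\Gamma((1-\kappa)/2) \ne 0$. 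If $n \le -1$, then $f \in X_n$ with a nonzero limit at $0$ is impossible unless $c_2 = 0$, because $\int_0^1 |c_2 C|\, x^n\,\rd x = \infty$. If $n > -1$, then the hypothesis $\lim_{x\to 0}\tfrac1x\int_0^x f(z)\,\rd z = 0$ applies: since $f(x) \to c_2 C$, the Cesàro-type average $\tfrac1x\int_0^x f \to c_2 C$ as well, forcing $c_2 C = 0$, hence $c_2 = 0$.

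The only genuinely delicate point is the verification that $\{x^{(1-\kappa)/2} I_{(1-\kappa)/2}(x),\, x^{(1-\kappa)/2} K_{(1-\kappa)/2}(x)\}$ is indeed a fundamental system for $f - \Gk f = 0$ and that every distributional solution $f \in X_n$ is a smooth classical solution of this form. Smoothness on $(0,\infty)$ follows from elliptic regularity (the coefficients of $\Gk$ are smooth there and $f = \Gk f$ in $\mathcal{D}'$), so $f$ is a classical solution; the substitution $f = x^{(1-\kappa)/2} g$ then reduces the equation to~\eqref{eq_mbessel} with $\nu = (1-\kappa)/2 > 0$, whose fundamental system is $(I_\nu, K_\nu)$ by \Cref{Bessel}. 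The computation transforming $f'' + \tfrac\kappa x f' - f = 0$ into $x^2 g'' + x g' - (x^2 + \nu^2) g = 0$ is routine but should be carried out once; after that, the asymptotic matching above closes the argument. I expect no real obstacle beyond bookkeeping — the structure is dictated by the resolvent kernel~\eqref{m4}, which already encodes precisely these two solutions.
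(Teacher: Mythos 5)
Your proposal is correct and follows essentially the same route as the paper's proof: the substitution $v=x^{-\nu}f$ with $\nu=(1-\kappa)/2>0$ reduces the equation to the modified Bessel equation~\eqref{eq_mbessel}, the exponential growth of $I_\nu$ at infinity forces $c_1=0$, and the positive limit of $x^\nu K_\nu(x)$ at the origin combined with~\eqref{x005} forces $c_2=0$. Your explicit treatment of the two cases in~\eqref{x005} and of interior regularity merely fills in details the paper leaves implicit.
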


\begin{proof}
	We argue as in the proof of \cite[Proposition~3.3]{MNS_JDE22}. Set $\nu=(1-\kappa)/2$ and notice that $\nu>0$ owing to $\kappa<1$. Then $v(x):=x^{-\nu} f(x)$, $x>0$, solves the modified Bessel equation
	\begin{equation}
		x^2 v''(x)+x v'(x)-(x^2+\nu^2)v(x)=0,\qquad x>0. \label{x003}
	\end{equation}
	Since $(I_\nu,K_\nu)$ forms a fundamental system for~\eqref{x003}, as recalled in \Cref{Bessel}, the Wronskian of which is given by $W(I_\nu,K_\nu)(x)=-x^{-1}$ (see \cite[9.6.15]{AbSt1972}), there is $(c_1, c_2)\in \R^2$ such that
	\begin{equation*}
		f(x)=c_1x^\nu I_\nu(x)+c_2 x^\nu K_\nu(x),\qquad x>0.
	\end{equation*}
	Since $I_{\nu}(x)$ is exponentially increasing as $x\to\infty$ by~\eqref{mbpc}, $f\in X_n$ implies that $c_1=0$. Next, $x^\nu K_\nu(x)$ has a positive limit as $x\to 0$ by~\eqref{mbpb}, so that \eqref{x005} entails that $c_2=0$. This completes the proof.
\end{proof}

We are now in a position to identify $\dom(A_{1,m})$.

\begin{prop}\label{P102}
	Let $\kappa<1$ and $\kappa-2<m\le 1$. Then $A_{1,m}f=\Gk f\in X_m$ for $f\in \dom(A_{1,m})$ and,
	\begin{itemize}
		\item [(c1)] if $\kappa-2<m\le \kappa$, then
		\begin{equation*}
			\dom(A_{1,m}) = \Big\{ f \in \Y_{(\kappa-2,m]}\,\Big |\,
				\Gk f\in X_m
			 \Big\};
		\end{equation*}
		\item [(c2)] if $\kappa<m<1$, then
		\begin{equation*}
			\dom(A_{1,m}) = \Big\{ f \in \Y_{[m-2,m]}\,\Big |\, \Gk f\in X_m \Big\};
		\end{equation*}
		\item [(c3)] if $m=1$, then
		\begin{equation*}
			\dom(A_{1,1}) = \Big\{ f \in \Y_{(-1,1]}\,\Big |\,\Gk f\in X_1,\:\displaystyle{\lim_{x\to 0} f(x) = 0}
			\Big\}.
		\end{equation*}
	\end{itemize}
\end{prop}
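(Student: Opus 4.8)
The plan is to exploit that, for $\lambda>0$, the boundedness of $S_{1,m}$ (\Cref{P11}) forces $\lambda\in\rho(A_{1,m})$, and that \Cref{L100} identifies the resolvent explicitly: writing $R_\lambda g(x):=\int_0^\infty G_\kappa(\lambda,x,r)\,g(r)\,r^\kappa\,\rd r$, one has $(\lambda-A_{1,m})^{-1}=R_\lambda$ on $X_m$ and $A_{1,m}f=\Gk f=\lambda f-\big((\lambda-A_{1,m})f\big)$ for $f\in\dom(A_{1,m})$. Consequently $\dom(A_{1,m})=R_\lambda(X_m)$, and it suffices to prove that $R_\lambda(X_m)$ coincides, in each of the three regimes, with the set described in \Cref{T7}. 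I would prove the two inclusions separately, using throughout that $\nu:=(1-\kappa)/2>0$ because $\kappa<1$.

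For the inclusion ``$\subseteq$'', fix $g\in X_m$ and put $f:=R_\lambda g$. Then $\Gk f=\lambda f-g\in X_m$ by \Cref{L100}, which settles the constraint $\Gk f\in X_m$. The behaviour near infinity is harmless: $R_\lambda\in\ml(X_m)$, so $f\in X_m$, and since $x^r\le x^m$ for $x\ge1$ whenever $r\le m$, $f$ is integrable against $x^r\,\rd x$ near infinity for every $r\le m$. The heart of the matter is the behaviour near the origin, which I would estimate by Tonelli's theorem and the symmetry~\eqref{sym}: for $r\le m$,
\[
\int_0^1 |f(x)|\,x^r\,\rd x\;\le\;\int_0^\infty |g(\rho)|\,\rho^\kappa\left(\int_0^1 x^r\,G_\kappa(\lambda,x,\rho)\,\rd x\right)\rd\rho ,
\]
and the inner integral is controlled, via the small-argument asymptotics~\eqref{mbpa}--\eqref{mbpb} of $I_\nu$ and $K_\nu$ and the global bounds~\eqref{i20i21}, by a constant times $(1\wedge\rho)^{\,r+2-\kappa}$ up to an exponentially decaying factor for $\rho\ge1$. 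The $x$-integral converges at $0$ precisely when $r+1-\kappa>-1$, i.e.\ $r>\kappa-2$, and the resulting weight $\rho^{\,r+2}$ is dominated by $\rho^m$ near $\rho=0$ precisely when $r\ge m-2$; in each regime both requirements hold on the claimed interval (in (c1), $r>\kappa-2\ge m-2$ since $m\le\kappa$; in (c2), $r\ge m-2>\kappa-2$; in (c3), $r>-1=m-2>\kappa-2$), while the endpoint $r=\kappa-2$ in (c1) and $r=-1$ in (c3) is genuinely excluded because the $x$-integral then diverges at the origin, and the endpoint $r=m-2$ in (c2) survives because there $r+2=m$ exactly. For the boundary condition in (c3) I would split the $\rho$-integral defining $f(x)$ at $\rho=x$: the part $\rho<x$ is bounded by $C\int_0^x\rho\,|g(\rho)|\,\rd\rho\to0$ since $\rho\,g(\rho)\in L_1((0,1))$ when $m=1$, while the part $\rho>x$ is bounded by $Cx^{1-\kappa}$ times an integral which, using the precise small-argument behaviour of $\rho^\nu K_\nu(\sqrt\lambda\rho)$ rather than a crude majorant, is $o\big(x^{\kappa-1}\big)$, so that the product tends to $0$.

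For the reverse inclusion ``$\supseteq$'', let $f$ belong to the set described in the relevant case. Then $f\in X_m$ and $\Gk f\in X_m$, so $g:=\lambda f-\Gk f\in X_m$, and $h:=R_\lambda g=(\lambda-A_{1,m})^{-1}g\in\dom(A_{1,m})$ satisfies $\Gk h=\lambda h-g$ in $\mathcal{D}'\bigl((0,\infty)\bigr)$ by \Cref{L100}. The difference $w:=f-h$ therefore solves $\lambda w=\Gk w$ in $\mathcal{D}'\bigl((0,\infty)\bigr)$, and by the inclusion just established $h$ lies in the same set as $f$, hence so does $w$. The rescaling $v(x):=w(x/\sqrt\lambda)$ turns $\lambda w=\Gk w$ into $v=\Gk v$ and preserves membership in every $X_n$ as well as condition~\eqref{x005}, so \Cref{L101} applies to $v$ as soon as a suitable $n$ is available. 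In (c1) one takes $n\in(\kappa-2,m]$ with $n\le-1$, possible since $\kappa-2<-1$; in (c2) one takes $n\in[m-2,m]$ with $n\le-1$, possible since $m-2<-1$; in (c3), where $(-1,1]$ contains no such $n$, one takes $n=1$ and uses that $\lim_{x\to0}w(x)=0$ --- valid because $\lim_{x\to0}f(x)=0$ by hypothesis and $\lim_{x\to0}h(x)=0$ by the first part --- to infer $\lim_{x\to0}\tfrac1x\int_0^x w(z)\,\rd z=0$. In every case \Cref{L101} gives $w\equiv0$, i.e.\ $f=h\in\dom(A_{1,m})$.

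The main obstacle is the near-origin analysis in the first inclusion: pinning down the exact endpoints of the interpolation intervals --- in particular distinguishing the open endpoints in (c1) and (c3) from the closed one in (c2) --- and establishing $\lim_{x\to0}f(x)=0$ in (c3). These do not follow from crude pointwise majorization of $G_\kappa$; one must retain the leading small-argument asymptotics of $I_\nu$ and $K_\nu$ and exploit the resulting cancellations, essentially reproducing, for the weight $x\,\rd x$ and with $p=1$, the computations behind \cite[Proposition~3.5 and Corollary~3.6]{MNS_JDE22}.
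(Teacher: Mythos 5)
Your proposal is correct in substance and shares the paper's overall architecture (a regularity step giving $\dom(A_{1,m})\subseteq D$, then the reverse inclusion via the uniqueness \Cref{L101} applied to $w=f-h$; your treatment of the boundary condition in (c3) by splitting the resolvent integral at $\rho=x$ is essentially verbatim the paper's Case~3), but the way you obtain the $\Y$-membership is genuinely different. The paper does not estimate the resolvent kernel at all for this step: it writes $f=\int_0^\infty e^{-t}S_{1,m}(t)g\,\rd t$ and invokes the smoothing estimate~\eqref{3.1}, $\|S_{1,m}(t)\|_{\ml(X_m,X_{m-\theta})}\le c\,t^{-\theta/2}$, which after integration against $e^{-t}\,\rd t$ yields $f\in X_{m-\theta}$ only for $\theta<\min\{m+2-\kappa,2\}$ --- i.e.\ the \emph{open} interval $\bigl((m-2)\vee(\kappa-2),m\bigr]$ --- because $\int_0^\infty t^{-\theta/2}e^{-t}\,\rd t$ diverges at $\theta=2$. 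The closed endpoint $r=m-2$ in case (c2) therefore requires a separate argument in the paper: \Cref{L1} gives an explicit antiderivative formula for $f$, the constant of integration is shown to vanish using $f\in X_{-1}$, and $\int_0^1 x^{m-2}|f|\,\rd x$ is then bounded directly by $\|\Gk f\|_{X_m}$. Your single Tonelli-plus-kernel-asymptotics computation captures the closed endpoint in one pass, at the price of redoing the $p=1$ analogue of \cite[Proposition~3.5]{MNS_JDE22}; the paper's route recycles \Cref{P11}, which is already proved, and confines the delicate Bessel asymptotics to the one place they are unavoidable (the limit $f(x)\to 0$ in Case~3). Both are legitimate. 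One small correction: your stated reason for excluding $r=-1$ in (c3) is off. Since $-1>\kappa-2$, the $x$-integral $\int_\rho^1 x^{-1}\,G_\kappa(\lambda,x,\rho)\,\rd x$ does \emph{not} diverge; rather it produces a factor $\rho^{1-\kappa}\ln(1/\rho)$, so that after multiplying by $\rho^\kappa$ one gets the weight $\rho\ln(1/\rho)$, which is not dominated by $\rho^m=\rho$ near the origin --- the failure is logarithmic, not a divergence of the inner integral. This does not affect your conclusion, and in any case the forward inclusion only needs the interval $(-1,1]$, which your estimate does deliver.
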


\begin{proof}
Consider $f\in \dom(A_{1,m})$. Then, $A_{1,m}f = \Gk f\in X_m$ by~\eqref{x001}.  We set $g=f-A_{1,m}f$ and infer from~\eqref{Ay} and \Cref{P11}, as in the proof of \cite[Proposition~3.3]{MNS_JDE22}, that, for $\theta\in [0,m+2-\kappa)$,
\begin{equation*}
	\|f\|_{X_{m-\theta}} \le \int_0^\infty e^{-t}\, \|S_{1,m}(t)g\|_{X_{m-\theta}}\,\rd t \le c(1,\theta) \|g\|_{X_m} \int_0^\infty t^{-\theta/2}\, e^{-t}\, \rd t, 
\end{equation*}
from which we deduce that $f\in X_{m-\theta}$ for all $0\le \theta < \min\{m+2-\kappa,2\}$. Equivalently,
\begin{equation*}
	f\in \Y_{\bigl((m-2)\vee(\kappa-2),m\bigr]}
\end{equation*}
so that
\begin{equation}
        \dom(A_{1,m}) \subset D :=   \Big\{ f \in \Y_{ \bigl((\kappa-2)\vee(m-2),m\bigr]}\,\Big |\, \Gk f\in X_m \Big\}. \label{x004}
\end{equation}

\medskip

\noindent\textbf{Case~1} [$m<1$]: In this case $-1\in \big((m-2)\vee(\kappa-2), m \big]$ and thus $\dom(A_{1,m})\subset X_{-1}$. Given $f\in D$, there is $u\in \dom(A_{1,m})$ such that $f-\Gk f = u-A_{1,m}u=u-\Gk u$. Since $f-u\in X_{-1}$, we readily deduce from \Cref{L101} that $f=u$ and thus that $f\in \dom(A_{1,m})$. Recalling~\eqref{x004}, we conclude that $\dom(A_{1,m}) = D$ and we have, in particular, proved \Cref{P102}~(c1).\\[.15cm]
\noindent\textbf{Case~2} [$m\in (\kappa,1)$]. It remains to show that $\dom(A_{1,m})=D\subset X_{m-2} $. To this end, we argue as in \cite[Proposition~3.3]{MNS_JDE22} and consider $f\in \dom(A_{1,m})$. Since $f\in X_m$ with $\Gk f\in X_m$, we deduce from \Cref{L1} that $f'\in X_{m-1}$ with
\begin{equation*}
	f'(x)=-x^{-\kappa}\int_x^\infty z^\kappa\, \Gk f(z)\,\rd z,\quad x>0.
\end{equation*}
In particular, $f'\in L_1\bigl((0,1)\bigr)$ and there is $b\in\mathbb{R}$ such that
\begin{align*}
	f(x) & = b - \int_0^x y^{-\kappa}\int_y^\infty z^\kappa\, \Gk f(z)\,\rd z\rd y \\
	& = b - \frac{1}{1-\kappa} \int_0^x z\, \Gk f(z)\,\rd z - \frac{x^{1-\kappa}}{1-\kappa} \int_x^\infty z^\kappa \, \Gk f(z)\,\rd z, \quad x>0.
\end{align*}
For $x\in (0,1)$, 
\begin{equation*}
	\left| \int_0^x z\, \Gk f(z)\,\rd z \right| \le \int_0^x z^m\, z^{1-m}\, |\Gk f(z)|\,\rd z \le x^{1-m} \|\Gk f\|_{X_m},
\end{equation*}
and
\begin{equation*}
	\left| x^{1-\kappa} \int_x^\infty z^\kappa \, \Gk f(z)\,\rd z\right| \le x^{1-\kappa} \int_x^\infty z^m\, z^{\kappa-m} \, |\Gk f(z)|\,\rd z \le x^{1-m} \|\Gk f\|_{X_m},
\end{equation*}
so that $\lim_{x\to 0} f(x) = b$. However, $f\in X_{-1}$ by~\eqref{x004} and thus it must hold that $b=0$. Therefore
\begin{equation*}
	f(x) = - \frac{1}{1-\kappa} \int_0^x z\, \Gk f(z)\,\rd z - \frac{x^{1-\kappa}}{1-\kappa} \int_x^\infty z^\kappa \, \Gk f(z)\,\rd z, \quad x>0,
\end{equation*}
and
\begin{align*}
	(1-\kappa) \int_0^1& x^{m-2}\, |f(x)|\,\rd x 
  \\
  &\le \int_0^1 x^{m-2} \int_0^x z\, |\Gk f(z)|\,\rd z\rd x+ \int_0^1 x^{m-1-\kappa} \int_x^1 z^\kappa\, |\Gk f(z)|\,\rd z\rd x\\ 
  &\qquad + \int_0^1 x^{m-1-\kappa} \int_1^\infty z^\kappa\, |\Gk f(z)|\,\rd z\rd x \\
 &\le \int_0^1 z\, \frac{z^{m-1}-1}{1-m}\, |\Gk f(z)|\,\rd z + \frac{1}{m-\kappa} \int_0^1 z^m |\Gk f(z)|\,\rd z\\
 &\qquad+ \frac{1}{m-\kappa} \int_1^\infty z^\kappa\, |\Gk f(z)|\,\rd z \\
 &\le \frac{1-\kappa}{(1-m)(m-\kappa)} \|\Gk f\|_{X_m}.
\end{align*}
Therefore $f\in X_{m-2}$ and we have shown that $\dom(A_{1,m})\subset X_{m-2}$, which completes the proof of~(c2).\\[.15cm]
\noindent\textbf{Case~3} [$m = 1$]: Bearing \eqref{x004} in mind, we now study the behavior of functions in $\dom(A_{1,1})$ as $x\to 0$. Consider $f\in \dom(A_{1,1})$ and set $g=f-A_{1,1}f$. Thanks to~\eqref{x002}, $f$ admits the following representation formula
\begin{equation*}
	f(x) = \int_0^\infty G_\kappa(1,x,r)\, g(r)\, r^\kappa\,\rd r,\quad x>0.
\end{equation*}
In particular, given $x\in (0,1)$, it follows from~\eqref{m4} and~\eqref{i20i21} with $\nu=(1-\kappa)/2>0$ that
\begin{align*}
	\big| f(x) \big| & \le \int_0^x (xr)^\nu\, K_\nu(x)\, I_\nu(r)\, |g(r)|\, r^{\kappa}\ \rd r + \int_x^\infty (xr)^\nu\, I_\nu(x)\, K_\nu(r)\, |g(r)|\, r^{\kappa}\ \rd r\\
	& \le C_I C_K \int_0^x r\, |g(r)|\, \rd r + C_I C_K x^{1-\kappa} \int_x^1 r^{\kappa}\, |g(r)|\, \rd r \\
	& \qquad + C_I C_K x^{1-\kappa} \int_1^\infty r^{\kappa/2}\, e^{-r}\, |g(r)|\, \rd r \\
	& \le C_I C_K \int_0^x r\, |g(r)|\, \rd r + C_I C_K x^{1-\kappa} \int_x^1 r^{\kappa}\, |g(r)|\, \rd r \\
	& \qquad + C_I C_K x^{1-\kappa} \sup_{r\ge 1}\left\{ r^{(\kappa-2)/2} e^{-r} \right\} \|g\|_{X_1}.
\end{align*}
Since $\kappa<1$, 
\begin{equation*}
	\left( \frac{x}{r} \right)^{1-\kappa} r |g(r)| \chi_{(x,1)}(r) \le r |g(r)|,\quad (x,r)\in (0,1)^2,
\end{equation*}
and
\begin{equation*}
	\lim_{x\to 0} \left( \frac{x}{r} \right)^{1-\kappa} r |g(r)| \chi_{(x,1)}(r) = 0\ \text { for a.e. }r\in (0,1).
\end{equation*}
Lebesgue's dominated convergence theorem implies that
\begin{equation*}
	\lim_{x\to 0} x^{1-\kappa} \int_x^1 r^{\kappa}\, |g(r)|\, \rd r = 0.
\end{equation*}
Noting that the two other terms on the right-hand side of the upper bound for $\big| f(x) \big|$ converge to zero as $x\to 0$ due to $\kappa<1$ and to $g\in X_1$, we conclude that 
\begin{equation*}
	\lim_{x\to 0}  f(x) = 0.
\end{equation*}
Recalling~\eqref{x004}, we arrive at
\begin{equation*} 
	\dom(A_{1,1}) \subset D_1 :=  \Big\{ f \in \Y_{(-1,1]}\,\Big |\, \Gk f\in X_1,\: \displaystyle{\lim_{x\to 0} f(x) = 0}\Big\}.
\end{equation*}
We next proceed as in the proof of~(c1) to derive the identity $\dom(A_{1,1}) = D_1$ using \Cref{L101} with $n=0$.
\end{proof}

While $\dom(A_{1,1})$ features explicitly a homogeneous Dirichlet boundary condition at $x=0$, a similar boundary condition is implicitly included in $\dom(A_{1,m})$ for $m\in (\kappa,1)$ as we now show.
	
\begin{cor}\label{C2}
	Let $m\in(\kappa,1]$. If $f\in \dom(A_{1,m})$, then
	\begin{equation*}
		\lim_{x\to 0} x^{m-1} f(x) = 0.
	\end{equation*}
\end{cor}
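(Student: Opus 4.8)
The plan is to distinguish the two cases $m=1$ and $m\in(\kappa,1)$, since the former is already essentially contained in \Cref{P102}~(c3). For $m=1$, the conclusion $\lim_{x\to 0} x^{m-1} f(x) = \lim_{x\to 0} f(x) = 0$ is precisely the boundary condition built into $\dom(A_{1,1})$ in \Cref{P102}~(c3), so there is nothing left to prove. Thus the substance of the corollary lies in the range $m\in(\kappa,1)$.

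For $m\in(\kappa,1)$, I would start from the explicit representation of $f\in\dom(A_{1,m})$ derived in the proof of \Cref{P102}, Case~2. There, with $g := f - A_{1,m}f \in X_m$, one has
\begin{equation*}
	f(x) = -\frac{1}{1-\kappa}\int_0^x z\,\Gk f(z)\,\rd z - \frac{x^{1-\kappa}}{1-\kappa}\int_x^\infty z^\kappa\,\Gk f(z)\,\rd z, \qquad x>0,
\end{equation*}
where $\Gk f\in X_m$. I would multiply by $x^{m-1}$ and estimate each term separately using only the integrability $\Gk f\in X_m$. For the first term, $x^{m-1}\left|\int_0^x z\,\Gk f(z)\,\rd z\right| \le x^{m-1}\int_0^x z^{1-m}\,z^m\,|\Gk f(z)|\,\rd z \le x^{m-1}\cdot x^{1-m}\int_0^x z^m|\Gk f(z)|\,\rd z = \int_0^x z^m|\Gk f(z)|\,\rd z$, which tends to $0$ as $x\to 0$ since $\Gk f\in X_m$ (absolute continuity of the integral). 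For the second term, split the integral at $1$: the tail $x^{m-\kappa}\int_1^\infty z^\kappa|\Gk f(z)|\,\rd z$ is bounded by $x^{m-\kappa}\|\Gk f\|_{X_m}\sup_{z\ge 1}z^{\kappa-m}$ times a constant and tends to $0$ because $m>\kappa$; for the near-origin part $x^{m-\kappa}\int_x^1 z^\kappa|\Gk f(z)|\,\rd z = \int_x^1 (x/z)^{m-\kappa}\,z^m|\Gk f(z)|\,\rd z$, the integrand is dominated by $z^m|\Gk f(z)|\in L_1((0,1))$ since $m>\kappa$ forces $(x/z)^{m-\kappa}\le 1$ on $\{x<z<1\}$, and it converges pointwise to $0$; Lebesgue's dominated convergence theorem finishes it.

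I do not expect any genuine obstacle here — the result is a fairly direct consequence of the representation formula already in hand combined with dominated convergence — but the one point requiring slight care is the interchange of the two limiting operations and the verification that the constant in the tail term does not blow up; this is where the strict inequality $m>\kappa$ is essential (and is exactly why the claim only holds for $m$ in this range, with $m=1$ handled separately by the domain definition). I would present the argument compactly: recall the representation formula with a citation to Case~2 of \Cref{P102}, write the three-term bound for $x^{m-1}|f(x)|$, and invoke the absolute continuity of $z\mapsto\int_0^z w^m|\Gk f(w)|\,\rd w$ together with dominated convergence to conclude.
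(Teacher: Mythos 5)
Your proposal is correct, but it follows a different route from the paper for the substantive case $m\in(\kappa,1)$. The paper's own proof is very short: it invokes \Cref{C1} (which, from $f\in\widehat{X}_m$ and $\Gk f\in X_m$, gives the \emph{existence} of $\ell:=\lim_{x\to 0}x^{m-1}f(x)$ via $f'\in X_{m-1}$ and an integration from infinity) and then observes that $\ell\neq 0$ would force $x^{m-2}|f(x)|\sim|\ell|\,x^{-1}$ near the origin, contradicting $f\in X_{m-2}$, which is part of $\dom(A_{1,m})$ by \Cref{P102}~(c2). You instead return to the explicit representation formula for $f$ established inside the proof of \Cref{P102}, Case~2 (where the constant $b$ was shown to vanish), multiply by $x^{m-1}$, and estimate each term directly; your three estimates (absolute continuity of $z\mapsto\int_0^z w^m|\Gk f(w)|\,\rd w$ for the first term, the factor $x^{m-\kappa}\to 0$ for the tail, and dominated convergence with the majorant $z^m|\Gk f(z)|$ for the near-origin part) are all valid and use exactly the hypotheses $\Gk f\in X_m$ and $m>\kappa<1$. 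What your approach buys is that it computes the limit directly and bypasses both \Cref{C1} and the membership $f\in X_{m-2}$; what the paper's approach buys is brevity, since all the needed machinery (\Cref{C1} and \Cref{P102}~(c2)) is already in place at that point. Your treatment of $m=1$ as immediate from the boundary condition in $\dom(A_{1,1})$ matches the paper.
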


\begin{proof}
	As the case $m=1$ is settled, we consider $m\in (\kappa,1)$ and let $f\in \dom(A_{1,m})$. Since $f\in X_m\cap X_{m-2}$ and $\Gk f\in X_m$ by \Cref{P102}~(c2), we infer from \Cref{C1} that
	\begin{equation*}
		\ell := \lim_{x\to 0} x^{m-1} f(x)
	\end{equation*}
	exists and this property is compatible with $f\in X_{m-2}$ only when $\ell=0$.
\end{proof}

We are left with proving that $S_{1,m}$ is a semigroup of contractions on $X_m$ for $m\in (\kappa,1]$. In spite of the explicit representation of $S_{1,m}=\mathcal{S}\vert_{X_m}$ given in~\eqref{m2}, this property does not seem to follow directly due to the modified Bessel functions involved. We therefore prove contractivity by establishing the dissipativity of $A_{1,m}$ in $X_m$.
	
\begin{lem}\label{L2}
Let $m\in(\kappa,1]$. If $\lambda>0$ and $f\in \dom(A_{1,m})$, then
\begin{equation}\label{diss}
\|(\lambda-A_{1,m})f\|_{X_m}\ge \lambda \|f\|_{X_m}+(1-m)(m-\kappa)\|f\|_{X_{m-2}}.
\end{equation}
\end{lem}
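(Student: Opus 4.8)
The plan is to prove the dissipativity estimate~\eqref{diss} by the classical device of testing the identity $g := (\lambda-A_{1,m})f = \lambda f - \Gk f$ against a smooth, convex regularization of $\sign f$, integrating against the weight $x^m$, and using the nonnegativity of the curvature term to discard it. First I record the structural facts needed. By \Cref{P102}, $A_{1,m}f=\Gk f\in X_m$, so $f''=\Gk f-\tfrac{\kappa}{x}f'\in L^1_{\mathrm{loc}}\bigl((0,\infty)\bigr)$; hence $f\in W^{2,1}_{\mathrm{loc}}\bigl((0,\infty)\bigr)$ and, in particular, $f$ and $f'$ are continuous on $(0,\infty)$. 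Moreover, \Cref{L1} provides the representation $f'(x)=-x^{-\kappa}\int_x^\infty z^\kappa\,\Gk f(z)\,\rd z$. Finally, $f\in X_m\cap X_{m-2}$ when $m\in(\kappa,1)$ by \Cref{P102}~(c2), while \Cref{C2} gives $\lim_{x\to0}x^{m-1}f(x)=0$ for every $m\in(\kappa,1]$.

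Fix $\varepsilon>0$ and set $j_\varepsilon(s):=\sqrt{s^2+\varepsilon^2}-\varepsilon$, so that $0\le j_\varepsilon(s)\le|s|$, $|j_\varepsilon'|\le 1$, $j_\varepsilon''\ge 0$, with $j_\varepsilon(s)\nearrow|s|$, $s\,j_\varepsilon'(s)\nearrow|s|$ and $j_\varepsilon'(s)\to\sign s$ as $\varepsilon\to0$. Writing $w:=j_\varepsilon(f)\in W^{2,1}_{\mathrm{loc}}$, the chain rule gives $\Gk f\cdot j_\varepsilon'(f)=\Gk w-j_\varepsilon''(f)(f')^2\le\Gk w$ a.e. For $0<a<1<b$, integrating $\int_a^b\Gk w\cdot x^m\,\rd x=\int_a^b\bigl(x^\kappa w'\bigr)' x^{m-\kappa}\,\rd x$ by parts twice yields
\begin{equation*}
\int_a^b \Gk w\cdot x^m\,\rd x = \Big[x^m w'-(m-\kappa)x^{m-1}w\Big]_a^b - (m-\kappa)(1-m)\int_a^b x^{m-2}w\,\rd x.
\end{equation*}
Multiplying $g=\lambda f-\Gk f$ by $j_\varepsilon'(f)x^m$, integrating over $(a,b)$, using the inequality $\Gk f\cdot j_\varepsilon'(f)\le\Gk w$ together with the previous identity, and discarding the endpoint contribution $(m-\kappa)b^{m-1}w(b)\ge0$ (legitimate for a lower bound since $m>\kappa$ and $w\ge0$), one arrives at
\begin{equation*}
\int_a^b g\,j_\varepsilon'(f)\,x^m\,\rd x \ge \lambda\int_a^b f\,j_\varepsilon'(f)\,x^m\,\rd x + (m-\kappa)(1-m)\int_a^b x^{m-2}w\,\rd x - b^m w'(b) + a^m w'(a) - (m-\kappa)a^{m-1}w(a).
\end{equation*}

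Now let $a\to0$ and $b\to\infty$. Since $|w'|\le|f'|$ and $0\le w\le|f|$, the endpoint terms are dominated by $b^m|f'(b)|$, $a^m|f'(a)|$, and $a^{m-1}|f(a)|$; the first two vanish because $x^m f'(x)\to0$ both as $x\to0$ and as $x\to\infty$ — a consequence of the representation of $f'$, of $\Gk f\in X_m$, and of $m>\kappa$, obtained by splitting $\int_x^\infty z^\kappa|\Gk f(z)|\,\rd z$ near $0$ and near $\infty$ — while $a^{m-1}|f(a)|\to0$ by \Cref{C2}. The three integrals converge by dominated and monotone convergence: for $m<1$ this uses $f\in X_m\cap X_{m-2}$, whereas for $m=1$ the factor $(m-\kappa)(1-m)$ vanishes so no $X_{-1}$-control is needed. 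Hence
\begin{equation*}
\int_0^\infty g\,j_\varepsilon'(f)\,x^m\,\rd x \ge \lambda\int_0^\infty f\,j_\varepsilon'(f)\,x^m\,\rd x + (m-\kappa)(1-m)\int_0^\infty x^{m-2}j_\varepsilon(f)\,\rd x.
\end{equation*}
Letting finally $\varepsilon\to0$, the left-hand side is at most $\|g\|_{X_m}$ because $|j_\varepsilon'|\le1$, while by monotone convergence the right-hand side increases to $\lambda\|f\|_{X_m}+(1-m)(m-\kappa)\|f\|_{X_{m-2}}$, which is~\eqref{diss}.

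The argument is routine except for the vanishing of the endpoint terms, and the genuinely delicate point is the claim $x^m f'(x)\to0$ at both $x=0$ and $x=\infty$: this cannot be read off from $\Gk f\in X_m$ directly and must be extracted from the representation $f'(x)=-x^{-\kappa}\int_x^\infty z^\kappa\Gk f(z)\,\rd z$ through a careful splitting of the integral in which the hypothesis $m>\kappa$ is used in an essential way.
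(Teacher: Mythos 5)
Your proof is correct and follows essentially the same route as the paper's: testing $(\lambda-A_{1,m})f$ against a convex regularization of $\mathrm{sign}\,f$ weighted by $x^m$, integrating by parts, discarding the nonnegative convexity term, and killing the boundary terms via $\lim x^m f'(x)=0$ at both ends (the paper's \eqref{f2x}), $\lim_{x\to\infty}x^{m-1}f(x)=0$ (\eqref{f1x}), and $\lim_{x\to 0}x^{m-1}f(x)=0$ from \Cref{C2}. The only differences are cosmetic: you use $j_\varepsilon(s)=\sqrt{s^2+\varepsilon^2}-\varepsilon$ instead of the paper's piecewise-quadratic $\beta_\varepsilon$, you drop the nonnegative boundary contribution at $b$ rather than showing it vanishes, and you invoke monotone rather than dominated convergence as $\varepsilon\to 0$; note also that the delicate asymptotics $x^mf'(x)\to 0$ you re-derive are already recorded as \eqref{f2x} in \Cref{L1}, which is exactly what the paper cites.
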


\begin{proof}
 Let $\lambda>0$ and $f\in \dom(A_{1,m})$. Given $\ve>0$ define $\beta_\ve\in W_{2,loc}^2(\R)$ by
\begin{align}\label{sign}
\beta_\ve(r):=\left\{\begin{array}{ll}
r-\ve/2, & r\ge \ve,\\[2mm]
r^2/(2\ve), & r\in (-\ve,\ve),\\[2mm]
-r-\ve/2,& r\le -\ve,
\end{array}\right.
\end{align} 
and note that 
\begin{equation}\label{beta}
\begin{split}
0 \le \beta_\ve(r)& \le \vert r\vert,\quad \vert \beta_\ve'(r)\vert\le 1,\quad r\in\R, \\
\big| \beta_\varepsilon(r) - |r| \big| & \le \frac{\varepsilon}{2}, \quad \lim_{\varepsilon\to 0} \beta_\varepsilon'(r) = \mathrm{sign}(r),\quad r\in\R.
\end{split}
\end{equation}
Integration by parts and using~\eqref{beta} yield
\begin{align*}
\|(\lambda  - A_{1,m}) f\|_{X_m} & \ge \int_0^\infty x^m\, \beta_\ve'(f(x)) \left[\lambda  f(x)-f''(x)-\frac{\kappa}{x}f'(x)\right]\, \mathrm{d}x\\
&= \lambda \int_0^\infty x^m\, \beta_\ve'(f(x)) \,  f(x)\, \mathrm{d}x- \Big[x^m\,\beta_\ve'(f(x))\, f'(x)\Big]_{x=0}^{x=\infty}\\
&\quad + \int_0^\infty x^m\, \beta_\ve''(f(x)) \,  \vert f'(x)\vert^2\, \mathrm{d}x\\
&\quad +(1-m)(m-\kappa)\int_0^\infty x^{m-2}\, \beta_\ve(f(x))\, \mathrm{d}x\\
&\quad + \Big[ (m-\kappa)\, x^{m-1}\, \beta_\ve(f(x))\Big]_{x=0}^{x=\infty}.
	\end{align*}		
As all boundary terms vanish in view of \eqref{f1x}, \eqref{f2x}, \eqref{beta}, and owing to the definition of $\dom(A_{1,m})$, we infer from the convexity of $\beta_\varepsilon$ that
\begin{align*}
    \|(\lambda  - A_{1,m}) f\|_{X_m} & \ge  \lambda \int_0^\infty x^m\, \beta_\ve'(f(x)) \,  f(x)\, \mathrm{d}x \\
    & \qquad +(1-m)(m-\kappa)\int_0^\infty x^{m-2}\, \beta_\ve(f(x))\, \mathrm{d}x.
	\end{align*}	
Since $f\in \dom(A_{1,m})$ and the second term on the right-hand side vanishes when $m=1$, we may pass to the limit as $\varepsilon\to 0$ with the help of Lebesgue's dominated convergence theorem and of~\eqref{beta} to obtain
\begin{equation*}
\|(\lambda-A_{1,m})f\|_{X_m}\ge \lambda \|f\|_{X_m}+(1-m)(m-\kappa)\|f\|_{X_{m-2}},
\end{equation*}
as claimed.
\end{proof}

\begin{cor}\label{P1}
Let $m\in(\kappa,1]$. Then $S_{1,m}$ is a contraction semigroup on $X_m$.
\end{cor}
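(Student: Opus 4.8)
The plan is to obtain contractivity of $S_{1,m}$ directly from the dissipativity estimate established in \Cref{L2}, combined with the generation property already proved in \Cref{P11}, by invoking the Hille--Yosida (equivalently, Lumer--Phillips) characterization of contraction semigroups. No new analytic input is needed; the corollary is a packaging of what precedes it.

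First I would recall that, by \Cref{P11}, $S_{1,m}$ is a bounded $C_0$-semigroup on $X_m$ with generator $A_{1,m}$; hence $A_{1,m}$ is closed and densely defined and, since $S_{1,m}$ is bounded, $(0,\infty)\subset\rho(A_{1,m})$ (this was already used in the proof of \Cref{L100}). In particular, for every $\lambda>0$ the operator $\lambda-A_{1,m}\colon\dom(A_{1,m})\to X_m$ is bijective. Next, given $\lambda>0$ and $g\in X_m$, I would set $f:=(\lambda-A_{1,m})^{-1}g\in\dom(A_{1,m})$. Since $m\in(\kappa,1]$ forces $(1-m)(m-\kappa)\ge0$, discarding this nonnegative term in~\eqref{diss} yields $\|g\|_{X_m}=\|(\lambda-A_{1,m})f\|_{X_m}\ge\lambda\|f\|_{X_m}$, that is, $\|\lambda(\lambda-A_{1,m})^{-1}\|_{\ml(X_m)}\le1$ for all $\lambda>0$.

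Finally, since $A_{1,m}$ is the generator of the $C_0$-semigroup $S_{1,m}$ and satisfies this resolvent bound for every $\lambda>0$, the Hille--Yosida generation theorem \cite[Chapter~II]{EnNa2000} (or, equivalently, the exponential formula $S_{1,m}(t)=\lim_{n\to\infty}\big(\tfrac{n}{t}(\tfrac{n}{t}-A_{1,m})^{-1}\big)^{n}$ applied to the bound above) gives $\|S_{1,m}(t)\|_{\ml(X_m)}\le1$ for all $t\ge0$, which is the claim. I expect essentially no obstacle here: the substantive work — verifying the dissipativity inequality and, in particular, the vanishing of all boundary terms using the precise description of $\dom(A_{1,m})$ from \Cref{P102} — was already carried out in \Cref{L2}; the only point requiring (minimal) care is that the coefficient $(1-m)(m-\kappa)$ is nonnegative precisely on the range $m\in(\kappa,1]$, so that it may be dropped from~\eqref{diss} without affecting the estimate.
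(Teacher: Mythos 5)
Your argument is correct and coincides with the paper's: both deduce contractivity from the dissipativity estimate~\eqref{diss} of \Cref{L2} (dropping the nonnegative term $(1-m)(m-\kappa)\|f\|_{X_{m-2}}$, which is where $m\in(\kappa,1]$ enters) together with the surjectivity of $\lambda-A_{1,m}$ already guaranteed by \Cref{P11}, via the Lumer--Phillips/Hille--Yosida characterization. The only difference is cosmetic: the paper cites the Lumer--Phillips theorem directly, while you spell out the equivalent resolvent bound $\|\lambda(\lambda-A_{1,m})^{-1}\|_{\ml(X_m)}\le 1$.
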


\begin{proof}
Since $A_{1,m}$ is the generator of the semigroup $S_{1,m}$ on $X_m$ and is a dissipative operator in $X_m$ by~\eqref{diss}, the claim is an immediate consequence of the Lumer–Phillips theorem \cite[Chapter~1, Theorem~4.3]{Paz1983}.
\end{proof}

\Cref{T7} is now a consequence of \Cref{P11}, \Cref{P102}, and \Cref{P1}.

\section{General Operators}\label{S4}

We consider now more general operators of the form
$$
\mathcal{A}_\alpha(a,b) f(x):= x^\alpha\left(f''(x)+\frac{a}{x}f'(x)+\frac{b}{x^2}f(x)\right),\quad x>0,
$$
for $f\in \mathcal{D}'((0,\infty))$ with given $(a,b,\alpha)\in \R^3$ and observe that $\mathcal{A}_\alpha(a,b)$ is singular for $\alpha<2$ and degenerate for $\alpha>2$. Following~\cite{MNS_JEE18} we transform such an operator to an operator of the form $\Gk$ (with a suitable $\kappa$) and apply the result of \Cref{T7}. To this end, the following isometries play an instrumental role (see \cite[Proposition~3.1 and Proposition~3.2]{MNS_JEE18} and \cite[Lemma~4.1]{MNS_JDE22}):

\begin{lem}\label{L14}
Given $n\in \R$, $l\in\R$,  and $\beta\in \R\setminus\{-1\}$ define
$$
(\Tk_\beta  u)(x):=\vert \beta+1\vert \, u\big(x^{1+\beta}\big), \quad (\mathcal{M}_l  u)(x):=x^l \, u(x) 
$$
for $x>0$ and $u\in X_n$. Then $\Tk_\beta: X_n\to X_{n(1+\beta)+\beta}$ is an isometric isomorphism with 
$\Tk_\beta^{-1}=\Tk_{-\beta/(1+\beta)}$ and $\mathcal{M}_l: X_{n}\to X_{n-l}$ is an isometric isomorphism with $\mathcal{M}_l^{-1}=\mathcal{M}_{-l}$.
Moreover,
$$
\mathcal{M}_l \Tk_\beta = \Tk_\beta \mathcal{M}_{l/(1+\beta)},
$$
and, for $u\in W_{1,loc}^2((0,\infty))$, it holds that
\begin{align*}
\partial_x (\Tk_\beta  u) &= (1+\beta) \Tk_\beta \mathcal{M}_{\beta/(1+\beta)} \partial_x u,\\
\partial_x^2 (\Tk_\beta  u) &= (1+\beta)^2\Tk_\beta \mathcal{M}_{2\beta/(1+\beta)} \partial_x^2 u + \beta (1+\beta) \Tk_\beta \mathcal{M}_{(\beta-1)/(1+\beta)} \partial_x u,\\
\partial_x (\mathcal{M}_l  u) &=l\mathcal{M}_{l-1} u + \mathcal{M}_l \partial_x u,\\
\partial_x^2 (\mathcal{M}_l  u) &=\mathcal{M}_l \partial_x^2 u +2l\mathcal{M}_{l-1} \partial_x u + l(l-1)\mathcal{M}_{l-2}u.
\end{align*}
\vspace{2mm}
Finally, we have that 
\begin{equation*}
    w \big(\mathcal{M}_l v\big) = \mathcal{M}_l(vw), \quad  w \big(\Tk_\beta v\big) = |1+\beta| \Tk_\beta\big(v \Tk_{-\beta/(1+\beta)}w\big)
\end{equation*}
for any two real-valued functions $v$ and $w$ defined on $(0,\infty)$, for $l\in\mathbb{R}$, and for $\beta\in\mathbb{R}\setminus\{-1\}$.
\end{lem}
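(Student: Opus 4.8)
The plan is to establish every assertion by elementary direct computation, treating the dilation-type map $\Tk_\beta$ and the multiplication-type map $\mathcal{M}_l$ in turn; no analytic subtlety is involved, the only care needed being the orientation of $x\mapsto x^{1+\beta}$ when $\beta<-1$ and the bookkeeping of exponents.

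For $\mathcal{M}_l$, I would simply note that for $u\in X_n$,
\[
\|\mathcal{M}_l u\|_{X_{n-l}} = \int_0^\infty x^l\,|u(x)|\,x^{n-l}\,\rd x = \|u\|_{X_n},
\]
so that $\mathcal{M}_l:X_n\to X_{n-l}$ is an isometry; since $\mathcal{M}_l\mathcal{M}_{-l}=\mathcal{M}_{-l}\mathcal{M}_l=\mathrm{id}$, it is an isometric isomorphism with $\mathcal{M}_l^{-1}=\mathcal{M}_{-l}$. For $\Tk_\beta$, I would substitute $y=x^{1+\beta}$ in the integral defining $\|\Tk_\beta u\|_{X_{n(1+\beta)+\beta}}$: the map $x\mapsto x^{1+\beta}$ is a $C^1$-bijection of $(0,\infty)$ (increasing if $\beta>-1$, decreasing if $\beta<-1$) with $|\rd x| = |1+\beta|^{-1}\, y^{1/(1+\beta)-1}\,\rd y$, so the constant $|1+\beta|$ in the definition of $\Tk_\beta$ cancels the Jacobian, and the resulting weight exponent simplifies via $\tfrac{\beta}{1+\beta}+\tfrac{1}{1+\beta}-1=0$ to give $\|\Tk_\beta u\|_{X_{n(1+\beta)+\beta}}=\|u\|_{X_n}$. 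For the inverse I would set $\gamma:=-\beta/(1+\beta)$, observe that $1+\gamma=1/(1+\beta)$, and compute $(\Tk_\gamma\Tk_\beta u)(x)=|1+\gamma|\,|1+\beta|\,u\big((x^{1+\gamma})^{1+\beta}\big)=u(x)$, whence $\Tk_\beta^{-1}=\Tk_{-\beta/(1+\beta)}$.

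The remaining identities are all one- or two-line computations. The commutation relation $\mathcal{M}_l\Tk_\beta=\Tk_\beta\mathcal{M}_{l/(1+\beta)}$ follows by evaluating both sides at $x$, each giving $|1+\beta|\,x^l\,u(x^{1+\beta})$ once one uses $(x^{1+\beta})^{l/(1+\beta)}=x^l$. The four differential formulas follow from the chain rule applied to $x\mapsto u(x^{1+\beta})$ (for $\Tk_\beta$) and the product rule applied to $x\mapsto x^l u(x)$ (for $\mathcal{M}_l$), differentiated once and twice; matching each result with the stated right-hand side is again only simplification of exponents of the form $(x^{1+\beta})^{p/(1+\beta)}=x^p$, and the computations are valid as identities of locally integrable functions for $u\in W^2_{1,loc}((0,\infty))$. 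Finally, the multiplicative identities follow from the definitions: $w(x)(\mathcal{M}_l v)(x)=x^l(vw)(x)=(\mathcal{M}_l(vw))(x)$, while, inserting $(\Tk_{-\beta/(1+\beta)}w)(x)=|1+\beta|^{-1}w(x^{1/(1+\beta)})$, the quantity $|1+\beta|\,\Tk_\beta\big(v\,\Tk_{-\beta/(1+\beta)}w\big)$ evaluated at $x$ becomes $|1+\beta|\,v(x^{1+\beta})\,w(x)=w(x)(\Tk_\beta v)(x)$.

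The only step calling for attention—and thus the \emph{main obstacle}, mild as it is—is the change of variables for $\Tk_\beta$ when $\beta<-1$, where the substitution reverses the orientation of $(0,\infty)$; it is precisely the absolute value $|1+\beta|$ in the definition of $\Tk_\beta$ that turns the map into an isometry rather than merely a bounded isomorphism. Since these statements are recorded in \cite[Proposition~3.1, Proposition~3.2]{MNS_JEE18} and \cite[Lemma~4.1]{MNS_JDE22}, one could also simply refer to those references, but it seems worthwhile to include the short computations above for the reader's convenience.
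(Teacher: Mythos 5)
Your computations are all correct, and they constitute precisely the elementary verification that the paper itself omits: Lemma~\ref{L14} is stated without proof, with a pointer to \cite[Propositions~3.1 and~3.2]{MNS_JEE18} and \cite[Lemma~4.1]{MNS_JDE22}, where the same change-of-variables and chain/product-rule calculations are carried out. Your attention to the orientation reversal of $x\mapsto x^{1+\beta}$ for $\beta<-1$ and to the cancellation $\tfrac{\beta}{1+\beta}+\tfrac{1}{1+\beta}-1=0$ is exactly the right bookkeeping, so there is nothing to add.
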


We transform the operator $\mathcal{A}_\alpha(a,b)$ to a singular operator $\mathcal{G}_\kappa$ for a suitable $\kappa$ depending on $\alpha$, $a$, and $b$.

\begin{lem}\label{L16}
Assume that $\alpha\ne 2$ and 
\begin{subequations}\label{k}
\begin{equation}\label{k1}
D:=(a-1)^2-4b\ge 0 .
\end{equation}
Introducing
\begin{equation}\label{k2}
l_\pm:=-\frac{a-1}{2-\alpha}\pm\frac{\sqrt{D}}{2-\alpha}
\end{equation}
and
\begin{equation}\label{k4}
\kappa_\pm:=2l_\pm+\frac{2a-\alpha}{2-\alpha}=1\pm\frac{2\sqrt{D}}{2-\alpha},
\end{equation}
we find
\begin{equation}
\mathcal{A}_\alpha(a,b) =\frac{(2-\alpha)^2}{4}\, \big(\Tk_{-\alpha/2}\mathcal{M}_{l_\pm}\big)\, \mathcal{G}_{\kappa_\pm}\,\big(\Tk_{-\alpha/2}\mathcal{M}_{l_\pm}\big)^{-1}\label{k3} .
\end{equation}
\end{subequations}
\end{lem}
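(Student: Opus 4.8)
The plan is to prove~\eqref{k3} by a direct computation: conjugate the Bessel operator $\Gk$ first with the multiplication isometry $\mathcal{M}_{l_\pm}$ and then with the dilation isometry $\Tk_{-\alpha/2}$, and compare the coefficients of the resulting second-order operator with those of $\mathcal{A}_\alpha(a,b)$. Every algebraic rule needed is collected in \Cref{L14}.

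\textbf{Step 1 (conjugation by $\mathcal{M}_l$).} Applying the product rules $\partial_x(\mathcal{M}_l u)=l\mathcal{M}_{l-1}u+\mathcal{M}_l\partial_x u$ and $\partial_x^2(\mathcal{M}_l u)=\mathcal{M}_l\partial_x^2 u+2l\mathcal{M}_{l-1}\partial_x u+l(l-1)\mathcal{M}_{l-2}u$ from \Cref{L14} to $\mathcal{M}_{-l}u=\mathcal{M}_l^{-1}u$, one obtains, for every $l\in\R$,
\begin{equation*}
\big(\mathcal{M}_l\Gk\mathcal{M}_{-l}\big)u=u''+\frac{\kappa-2l}{x}\,u'+\frac{l(l+1-\kappa)}{x^2}\,u,\qquad x>0,
\end{equation*}
so that $\mathcal{M}_l\Gk\mathcal{M}_{-l}$ is the generalized Bessel operator $\mathcal{N}_{A,B}u:=u''+\frac{A}{x}u'+\frac{B}{x^2}u$ with $A:=\kappa-2l$ and $B:=l(l+1-\kappa)$.

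\textbf{Step 2 (conjugation by $\Tk_{-\alpha/2}$).} Set $\beta:=-\alpha/2$, so $1+\beta=(2-\alpha)/2\neq0$ since $\alpha\neq2$, and recall from \Cref{L14} that $\Tk_\beta^{-1}=\Tk_{-\beta/(1+\beta)}$. From the differentiation rules for $\Tk_\beta$ and the product identity $w\,(\Tk_\beta v)=|1+\beta|\,\Tk_\beta\big(v\,\Tk_{-\beta/(1+\beta)}w\big)$ of \Cref{L14} with $w(x)\in\{x^{-1},x^{-2},x^{\alpha}\}$ — which give $x^{-1}\Tk_\beta v=\Tk_\beta\mathcal{M}_{-1/(1+\beta)}v$, $x^{-2}\Tk_\beta v=\Tk_\beta\mathcal{M}_{-2/(1+\beta)}v$, and $x^{\alpha}\Tk_\beta v=\Tk_\beta\mathcal{M}_{\alpha/(1+\beta)}v$ — one computes, collecting powers of $x$,
\begin{equation*}
\Tk_\beta^{-1}\,\mathcal{A}_\alpha(a,b)\,\Tk_\beta\,g=(1+\beta)^2\,g''+\frac{(1+\beta)(\beta+a)}{x}\,g'+\frac{b}{x^2}\,g,
\end{equation*}
the exponents collapsing because $\alpha+2\beta=0$, $\alpha+\beta-1=-(1+\beta)$ and $\alpha-2=-2(1+\beta)$. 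Since $(1+\beta)^2=(2-\alpha)^2/4$, this reads $\Tk_\beta^{-1}\mathcal{A}_\alpha(a,b)\Tk_\beta=\frac{(2-\alpha)^2}{4}\,\mathcal{N}_{A_0,B_0}$ with $A_0:=\frac{2a-\alpha}{2-\alpha}$ and $B_0:=\frac{4b}{(2-\alpha)^2}$.

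\textbf{Step 3 (matching parameters).} It remains to choose $(l,\kappa)$ with $\mathcal{N}_{A,B}=\mathcal{N}_{A_0,B_0}$, i.e.\ $\kappa-2l=A_0$ and $l(l+1-\kappa)=B_0$. The first equation forces $\kappa=2l+\frac{2a-\alpha}{2-\alpha}$, which with $l=l_\pm$ from~\eqref{k2} is precisely~\eqref{k4}; substituting it back gives $l+1-\kappa=\frac{2(1-a)}{2-\alpha}-l$, hence, for $l_\pm=\frac{(1-a)\pm\sqrt D}{2-\alpha}$, the value $l_\pm+1-\kappa_\pm=\frac{(1-a)\mp\sqrt D}{2-\alpha}=l_\mp$, so that
\begin{equation*}
l_\pm\big(l_\pm+1-\kappa_\pm\big)=l_\pm l_\mp=\frac{(1-a)^2-D}{(2-\alpha)^2}=\frac{4b}{(2-\alpha)^2}=B_0
\end{equation*}
by~\eqref{k1}; note that $D\ge0$ is exactly what makes $l_\pm$ and $\kappa_\pm$ real. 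Therefore $\mathcal{M}_{l_\pm}\mathcal{G}_{\kappa_\pm}\mathcal{M}_{-l_\pm}=\mathcal{N}_{A_0,B_0}=\frac{4}{(2-\alpha)^2}\,\Tk_{-\alpha/2}^{-1}\mathcal{A}_\alpha(a,b)\Tk_{-\alpha/2}$, and conjugating by $\Tk_{-\alpha/2}$ and using $(\Tk_{-\alpha/2}\mathcal{M}_{l_\pm})^{-1}=\mathcal{M}_{-l_\pm}\Tk_{-\alpha/2}^{-1}$ gives exactly~\eqref{k3}.

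The computations in Steps~1 and~2 are elementary but lengthy, and the point requiring the most care is the exponent bookkeeping in Step~2 — verifying that pushing the lower-order terms $\frac{a}{x}u'$, $\frac{b}{x^2}u$ and the prefactor $x^{\alpha}$ through the dilation $\Tk_{-\alpha/2}$ produces terms carrying exactly the powers $x^{-1}$, $x^{-2}$ and $x^0$. The only structurally significant feature is that matching the zeroth-order coefficient imposes the quadratic constraint $l_\pm l_\mp=B_0$, which accounts both for the two admissible pairs $(l_\pm,\kappa_\pm)$ and for the standing hypothesis $D\ge0$.
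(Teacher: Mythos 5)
Your proposal is correct and follows essentially the same route as the paper: reduce to the case $\alpha=0$ by conjugating with $\Tk_{-\alpha/2}$, then eliminate the zeroth-order term by conjugating with $\mathcal{M}_{l_\pm}$, where $l_\pm$ are the roots of the same quadratic (you verify the root condition via the product $l_\pm l_\mp=B_0$ rather than by solving it, and you run the $\mathcal{M}_l$-conjugation in the opposite direction, but the algebra is identical). All the computations check out.
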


\begin{proof} 
Observing that
$$
\mathcal{A}_\alpha(a,b) = \mathcal{M}_\alpha \partial_x^2 + a \mathcal{M}_{\alpha-1} \partial_x + b \mathcal{M}_{\alpha-2},
$$
it follows from Lemma~\ref{L14} that
\begin{align*}
\mathcal{A}_\alpha(a,b) \Tk_{-\alpha/2}  &= \mathcal{M}_\alpha\partial_x^2\Tk_{-\alpha/2}  + a \mathcal{M}_{\alpha-1} \partial_x \Tk_{-\alpha/2}  + b\, \mathcal{M}_{\alpha-2} \Tk_{-\alpha/2}   \\
&= \mathcal{M}_\alpha\,\Tk_{-\alpha/2} \left[ \frac{(2-\alpha)^2}{4}\,\mathcal{M}_{-2\alpha/(2-\alpha)}\, \partial_x^2  -\frac{\alpha(2-\alpha)}{4}\, \mathcal{M}_{-(2+\alpha)/(2-\alpha)}\,\partial_x  \right]\\
&\quad + \frac{a(2-\alpha)}{2} \mathcal{M}_{\alpha-1}\, \Tk_{-\alpha/2} \mathcal{M}_{-\alpha/(2-\alpha)}\, \partial_x   + b\, \mathcal{M}_{\alpha-2} \Tk_{-\alpha/2}  \\
&= \frac{(2-\alpha)^2}{4}\, \Tk_{-\alpha/2} \partial_x^2   - \frac{\alpha(2-\alpha)}{4}\, \Tk_{-\alpha/2}\mathcal{M}_{-1}\,\partial_x  \\
&\quad + \frac{a(2-\alpha)}{2}\, \Tk_{-\alpha/2} \mathcal{M}_{-1}\, \partial_x   +b\, \Tk_{-\alpha/2} \mathcal{M}_{-2}\,   .
\end{align*}
Therefore,
\begin{subequations}\label{R}
\begin{equation}\label{R1}
 \Tk_{-\alpha/2}^{-1} \,\mathcal{A}_\alpha(a,b)\, \Tk_{-\alpha/2} 
=\frac{(2-\alpha)^2}{4}\,  \mathcal{A}_0(A,B)
\end{equation}
with
\begin{equation}\label{R2}
A := \frac{2a-\alpha}{2-\alpha}, \quad B:= \frac{4b}{(2-\alpha)^2}.
\end{equation}
\end{subequations}
Next, we use Lemma~\ref{L14} to obtain
\begin{align*}
 \mathcal{A}_0(A,B)\,\mathcal{M}_l  &= \partial_x^2 \mathcal{M}_l  + A\, \mathcal{M}_{-1}\,\partial_x \mathcal{M}_l  + B\, \mathcal{M}_{-2} \mathcal{M}_l  \\
&= \mathcal{M}_l\partial_x^2  + 2l\, \mathcal{M}_{l-1} \partial_x  + l(l-1)\,\mathcal{M}_{l-2} \\
&\quad + lA\, \mathcal{M}_{l-2}  + A\mathcal{M}_{l-1} \partial_x  +B\, \mathcal{M}_{l-2} ,
\end{align*}
and thus
\begin{equation}\label{R4}
\begin{split}
\mathcal{M}_{-l}\,\mathcal{A}_0(A,B)\,\mathcal{M}_l   &=    \partial_x^2\,  + \left(2l+\frac{2a-\alpha}{2-\alpha}\right)\, \mathcal{M}_{-1}\,\partial_x  \\
& \qquad + \left(l^2+\frac{2a-2}{2-\alpha}l +\frac{4b}{(2-\alpha)^2} \right)\, \mathcal{M}_{-2}.
\end{split}
\end{equation}
Combining~\eqref{R} and~\eqref{R4} yields
\begin{align*}
\big(\Tk_{-\alpha/2}&\mathcal{M}_l\big)^{-1}\mathcal{A}_\alpha(a,b)\big(\Tk_{-\alpha/2}\mathcal{M}_l\big)\\
&=\frac{(2-\alpha)^2}{4}\left\{\partial_x^2  +\left(2l+\frac{2a-\alpha}{2-\alpha}\right)\mathcal{M}_{-1}\partial_x +\left(l^2+\frac{2a-2}{2-\alpha}l+\frac{4b}{(2-\alpha)^2}\right)\mathcal{M}_{-2}\right\}.
\end{align*}
Choosing $l$ suitably we can get rid of the zero order term. More precisely, the non-negativity~\eqref{k1} of $D$ ensures that $l_\pm$ in~\eqref{k2} are well-defined and taking $l\in \{l_\pm\}$ in the above formula gives
\begin{align*}
\big(\Tk_{-\alpha/2}\mathcal{M}_{l_\pm}\big)^{-1}\mathcal{A}_\alpha(a,b)\big(\Tk_{-\alpha/2}\mathcal{M}_{l_\pm}\big)&=\frac{(2-\alpha)^2}{4}\left\{\partial_x^2  +\left(2l_\pm+\frac{2a-\alpha}{2-\alpha}\right)x^{-1}\partial_x\right\}\,;
\end{align*}
that is,
\begin{align*}
\mathcal{A}_\alpha(a,b)
&=\frac{(2-\alpha)^2}{4}\, \big(\Tk_{-\alpha/2}\mathcal{M}_{l_\pm}\big)\, \mathcal{G}_{\kappa_\pm}\,\big(\Tk_{-\alpha/2}\mathcal{M}_{l_\pm}\big)^{-1} 
\end{align*}
with $\kappa_\pm$ given by~\eqref{k4}, as claimed.
\end{proof}

Noticing from \Cref{L14} that, for any $n\in\R$, $\big(\Tk_{-\alpha/2}\mathcal{M}_{l_\pm}\big)^{-1}$ is an isometric isomorphism from $X_n$ onto $X_{m_\pm(n)}$ with
\begin{equation}
m_\pm(n):=\frac{2n+\alpha}{2-\alpha}+l_\pm=\frac{2n+\alpha-a+1}{2-\alpha}\pm \frac{\sqrt{D}}{2-\alpha}, \label{mpmn}
\end{equation}
we can make use of the results from \Cref{S3} and \Cref{L16} in order to obtain a positive analytic semigroup on $X_n$ for $\mathcal{A}_\alpha(a,b)$ with suitably chosen values of $\alpha$, $a$, $b$, and $n$. We are now ready for the proofs of \Cref{{T5}} and \Cref{T6}.

\begin{proof}[Proof of \Cref{{T5}} (Singular case)]
Fix $n\in \left(\nsb,\nsbp\right]$ and set $l:=l_-$, $\kappa:=\kappa_-$, and $m:=m_-(n)$. Then, since $D>0$ and $\alpha<2$, we have
$$
\kappa-2=-1-\frac{2\sqrt{D}}{2-\alpha} <m = \frac{2n+\alpha-a+1}{2-\alpha}- \frac{\sqrt{D}}{2-\alpha}\le 1.
$$
It then follows from \Cref{T7} that $A_{1,m}$ with domain $\dom(A_{1,m})$ defined therein generates a bounded and positive analytic semigroup on $X_{m}$ of angle $\pi/2$.
Since
\begin{equation}\label{fff}
\Tk_{-\alpha/2}\mathcal{M}_{l}: X_r \longrightarrow X_{[(r-l)(2-\alpha)-\alpha]/2}
\end{equation} 
is an isometric isomorphism for all $r\in\mathbb{R}$ by \Cref{L14} and
\begin{equation}
	\big[ \big(\Tk_{-\alpha/2}\mathcal{M}_{l}\big)^{-1}f \big](x) = \frac{2}{|2-\alpha|} x^{-l} f\left( x^{2/(2-\alpha)} \right), \quad x>0, \label{x006}
\end{equation} 
it follows from \eqref{k3} that
$$
\big(\Tk_{-\alpha/2}\mathcal{M}_{l} \big) \big(\dom(A_{1,m})\big)= \dom(\mathbb{A}_n).
$$
Consequently, since $\big(\Tk_{-\alpha/2}\mathcal{M}_{l}\big)^{-1}:X_n\longrightarrow X_{m}$ is an isometric isomorphism,  $\mathbb{A}_n$ generates a bounded and positive analytic semigroup on $X_n$ of angle $\pi/2$ which is a semigroup of contractions on $X_n$ for $n\in \big(\nsbm,\nsbp\big]$.
\end{proof}

\begin{proof}[Proof of \Cref{T6} (Degenerate case)]
Fix $n\in \left[\nsam,\nsa\right)$ and set $l:=l_+$, $\kappa:=\kappa_+$, and $m:=m_+(n)$. Then, since $D>0$ and $\alpha>2$, we have
$$
\kappa=1+\frac{2\sqrt{D}}{2-\alpha} <m= \frac{2n+\alpha-a+1}{2-\alpha}+ \frac{\sqrt{D}}{2-\alpha}\le 1.
$$
Thanks to \Cref{T7}, we may argue as in the proof of \Cref{T5}, paying special attention to the negativity of $2-\alpha$ in~\eqref{fff} and~\eqref{x006}, to complete the proof of \Cref{T6}. 
\end{proof}

\section{The Absorption Semigroup}\label{S3B}

Let
\begin{equation}\label{a1}
	\omega: (0,\infty)\to \mathbb{R} \;\text{ be a non-negative measurable function}
\end{equation}
and define
\begin{subequations}\label{Al}
	\begin{equation}
		A_{1,m}^\omega f:=A_{1,m} f-\omega f,\quad f\in \dom(A_{1,m}^\omega),
	\end{equation}
	with domain
	\begin{equation}  
		\dom(A_{1,m}^\omega):=\big\{f\in \dom(A_{1,m})\,\big |\,  \omega f\in X_m\big\}.
	\end{equation}
\end{subequations}

We shall show that $A_{1,m}^\omega$ generates a positive analytic semigroup of contractions on $X_m$ for $m\in (\kappa,1]$.

\begin{lem}\label{LL1}
	Assume that $\omega$ satisfies~\eqref{a1} and let $m\in (\kappa,1]$. If it holds that $(f,g)\in X_m^2$ and $(f_n)_{n\ge 1}$ is a sequence in $\dom(A_{1,m})$ such that 
	\begin{equation}\label{eq1}
		\lim_{n\to\infty} \|f_n - f\|_{X_m} = \lim_{n\to\infty} \|g_n-g\|_{X_m} = 0
	\end{equation}
	with $g_n:=-A_{1,m} f_n +(\omega\wedge n) f_n$, then $f\in \dom(A_{1,m}^\omega)$ with $A_{1,m}^\omega f=-g$.
\end{lem}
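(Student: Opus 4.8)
The statement to prove is \Cref{LL1}: a closedness-type property for the operator $A_{1,m}^\omega$ obtained as a limit of the truncated operators $A_{1,m} - (\omega\wedge n)$. The natural approach is to show that $f$ lies in $\dom(A_{1,m})$, that $\omega f\in X_m$ (so that $f\in\dom(A_{1,m}^\omega)$), and that the limiting identity $-A_{1,m}f + \omega f = g$ holds. The main tool will be the closedness of the generator $A_{1,m}$ on $X_m$ (every semigroup generator is closed) together with a Fatou/monotone-convergence argument to control the absorption term $\omega f$.

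First I would pass to a subsequence so that $f_n\to f$ and $g_n\to g$ almost everywhere on $(0,\infty)$, in addition to the $X_m$-convergence in~\eqref{eq1}. Writing $h_n := (\omega\wedge n) f_n = g_n + A_{1,m} f_n$, the key observation is that dissipativity of $A_{1,m}$ (\Cref{L2}, or just boundedness of the contraction semigroup $S_{1,m}$ from \Cref{P1}) gives a uniform resolvent-type bound: for $\lambda>0$, $\|(\lambda - A_{1,m})f_n\|_{X_m}\ge \lambda\|f_n\|_{X_m}$, which combined with $(\lambda - A_{1,m})f_n = \lambda f_n - g_n + (\omega\wedge n)f_n - \ldots$ — more cleanly, since $-A_{1,m} f_n = g_n - (\omega\wedge n) f_n$, I would estimate $\|A_{1,m} f_n\|_{X_m}$. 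The real point is that $h_n$ should be bounded in $X_m$: indeed, testing the equation against $\beta_\varepsilon'(f_n)$ as in the proof of \Cref{L2} (now with the extra nonnegative term $\int x^m (\omega\wedge n)\beta_\varepsilon'(f_n)f_n\,\rd x \ge 0$, since $\beta_\varepsilon'(f_n)f_n \ge 0$ and $\omega\wedge n\ge 0$) yields $\|g_n\|_{X_m}\ge \|(\omega\wedge n)f_n\|_{X_m}$ after letting $\varepsilon\to 0$; wait, more carefully one gets $\|g_n\|_{X_m} \ge \||(\omega\wedge n)f_n|\,\|_{X_m}$ up to lower-order terms, hence $(h_n)$ is bounded in $X_m$, uniformly in $n$.

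Next, since $(\omega\wedge n)$ is nondecreasing in $n$ and $f_n\to f$ a.e., I would argue that $(\omega\wedge n)f_n \to \omega f$ a.e. — this is immediate at points where $f(x)=0$ and, where $f(x)\ne 0$, follows because $\omega\wedge n \to \omega$ pointwise and $f_n\to f$. By Fatou's lemma applied to $x^m|(\omega\wedge n)f_n|$, the $X_m$-boundedness of $(h_n)$ forces $\omega f\in X_m$, i.e. $\int_0^\infty x^m |\omega(x) f(x)|\,\rd x < \infty$. This establishes the integrability half of $f\in\dom(A_{1,m}^\omega)$. To upgrade a.e. convergence of $h_n$ to $X_m$-convergence, I would combine the boundedness with a uniform-integrability / Vitali argument, or alternatively first identify the limit of $A_{1,m} f_n = g_n - h_n$ in the distributional sense and then use closedness: in $\mathcal{D}'((0,\infty))$ we have $A_{1,m} f_n = \Gk f_n \to \Gk f$ (since $f_n\to f$ in $X_m \hookrightarrow L_{1,loc}$, and $\Gk$ is continuous on distributions), while $A_{1,m}f_n = g_n - h_n \to g - \omega f$ in $\mathcal{D}'$ (using $h_n\to\omega f$ a.e. plus domination by an $X_m$-bound to pass to the limit against test functions). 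Hence $\Gk f = g - \omega f \in X_m$, and since $A_{1,m}$ is closed with $A_{1,m} f_n = \Gk f_n \to \Gk f$ in $X_m$ while $f_n\to f$ in $X_m$, we conclude $f\in\dom(A_{1,m})$ with $A_{1,m} f = \Gk f = g - \omega f$. Therefore $f\in\dom(A_{1,m}^\omega)$ and $A_{1,m}^\omega f = A_{1,m}f - \omega f = g - 2\omega f$? — no: $A_{1,m}^\omega f = A_{1,m} f - \omega f = (g - \omega f) - \omega f$, so I must be careful: from $A_{1,m}f - h_n \to$ ... let me restate: $-A_{1,m} f_n + h_n = g_n$, so $A_{1,m} f_n = h_n - g_n \to \omega f - g$, giving $A_{1,m} f = \omega f - g$, i.e. $A_{1,m}^\omega f = A_{1,m} f - \omega f = -g$, as claimed.

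\textbf{Main obstacle.} The delicate point is the uniform $X_m$-bound on $h_n = (\omega\wedge n)f_n$: one cannot simply bound $\|(\omega\wedge n)f_n\|_{X_m}$ by $\|g_n\|_{X_m} + \|A_{1,m}f_n\|_{X_m}$ without independently controlling $\|A_{1,m}f_n\|_{X_m}$, and the latter is itself $\|h_n - g_n\|_{X_m}$, so the estimate is circular unless one exploits the sign. The resolution is the dissipativity-type computation of \Cref{L2} adapted to the absorption term: integrating the identity $-A_{1,m}f_n + h_n = g_n$ against $x^m\beta_\varepsilon'(f_n)$, all the boundary terms vanish as in \Cref{L2}, the second-order term contributes nonnegatively by convexity of $\beta_\varepsilon$, the $(1-m)(m-\kappa)$ term is nonnegative, and crucially $\int_0^\infty x^m (\omega\wedge n)\beta_\varepsilon'(f_n) f_n\,\rd x \ge 0$ because $\beta_\varepsilon'(r)r\ge 0$; letting $\varepsilon\to 0$ yields $\|g_n\|_{X_m}\ge \|h_n\|_{X_m}$ (and also $\ge \|A_{1,m}f_n\|_{X_m}$ cheaply), which is the bound needed. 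Everything else — the a.e. convergence, Fatou, and the closedness argument — is then routine.
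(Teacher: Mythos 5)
Your step~(i) is essentially the paper's: testing $-A_{1,m}f_n+(\omega\wedge n)f_n=g_n$ against $x^m\beta_\ve'(f_n)$, discarding the nonnegative terms, and letting $\ve\to0$ gives the uniform bound $\int_0^\infty x^m(\omega\wedge n)|f_n|\,\rd x+(1-m)(m-\kappa)\int_0^\infty x^{m-2}|f_n|\,\rd x\le\sup_n\|g_n\|_{X_m}$, and Fatou then yields $\omega f\in X_m$ (and $f\in X_{m-2}$ for $m<1$, which you will need for $f\in\dom(A_{1,m})$ but do not mention). Your endgame via closedness of $A_{1,m}$, with the sign bookkeeping you corrected, is also the paper's step~(iii).

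The genuine gap is the middle step: you need $(\omega\wedge n)f_n\to\omega f$ \emph{in $X_m$} (closedness of $A_{1,m}$ requires $A_{1,m}f_n=(\omega\wedge n)f_n-g_n$ to converge in $X_m$, not merely in $\mathcal{D}'$), and neither of your two suggestions delivers it. A uniform $L_1$ bound plus a.e.\ convergence gives neither uniform integrability nor even distributional convergence of $(\omega\wedge n)f_n$: an approximate identity concentrating at a point converges to $0$ a.e.\ with bounded $L_1$ norm, yet its action on a test function does not tend to $0$. So the phrase ``domination by an $X_m$-bound'' is not a domination hypothesis, and Vitali is not applicable without an independent proof of equi-integrability. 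The danger is real here because $\omega\wedge n$ grows with $n$ exactly where $f_n-f$ is small in $X_m$ but possibly large pointwise. The paper closes this gap with a second, \emph{localized} dissipativity estimate: testing against $x^m\vartheta_R(x)\beta_\ve'(f_n(x))$ with a cutoff $\vartheta_R$ vanishing on $(0,R)$ and equal to $1$ on $(2R,\infty)$ produces the uniform tail bound $\lim_{R\to\infty}\sup_n\int_{2R}^\infty x^m(\omega\wedge n)|f_n|\,\rd x=0$, while on $(0,2R)$ one uses that $\omega\wedge n=\omega$ there for $n$ large, so that $\int_0^{2R}x^m\omega|f_n-f|\,\rd x\le C_R\|f_n-f\|_{X_m}\to0$. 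Your proposal contains no substitute for this argument; without it, the convergence of $A_{1,m}f_n$ in $X_m$, and hence the application of closedness, is unjustified. (For $m=1$ the shortcut of identifying $\Gk f=\omega f-g$ only distributionally would in any case not suffice, since $\dom(A_{1,1})$ also encodes $f\in\Y_{(-1,1]}$ and the boundary condition $\lim_{x\to0}f(x)=0$, which you only obtain through the $X_1$-closedness route.)
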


\begin{proof}
	\noindent\textbf{(i)} We first prove that $f\in X_{m-2}$ when $m<1$ and $\omega f\in X_{m}$. To this end, let $\ve\in (0,1)$ and $\beta_\ve$ be defined as in~\eqref{sign}. Then, as in the proof of Lemma~\ref{L2} with $\lambda=0$ we have
	\begin{align*}
		\|g_n\|_{X_m} & \ge \int_0^\infty x^m\, \beta_\ve'(f_n(x))\, g_n(x)\, \mathrm{d}x \\
		&= \int_0^\infty x^m\, \beta_\ve'(f_n(x))\, \left(-f_n''(x)-\frac{\kappa}{x} f_n'(x)\right)\, \mathrm{d}x \\
		& \qquad +\int_0^\infty x^m\, \beta_\ve'(f_n(x))\, (\omega(x)\wedge n)f_n(x)\, \mathrm{d}x\\
		& \ge (1-m)(m-\kappa) \int_0^\infty x^{m-2}\, \beta_\ve(f_n(x))\, \mathrm{d}x \\
		& \qquad +\int_0^\infty x^{m}\, (\omega(x)\wedge n)\, \beta_\ve'(f_n(x))\,f_n(x)\, \mathrm{d}x.
	\end{align*}
	Letting $\ve\to 0$
	we derive
	\begin{align*}
		(1-m)(m-\kappa) \int_0^\infty x^{m-2}\, \vert f_n(x)\vert \, \mathrm{d}x+ \int_0^\infty x^{m}\, (\omega(x)\wedge n)\, |f_n(x)|\, \mathrm{d}x\le c_0 ,
	\end{align*}
	where $c_0:=\sup_{n\in\mathbb{N} }\|g_n\|_{X_m}<\infty$.
	For $N\ge 1$ fixed and $n\ge N$, we obtain
	\begin{align*}
		(1-m)(m-\kappa) \int_0^\infty x^{m-2}\, \vert f_n(x)\vert \, \mathrm{d}x+ \int_0^\infty x^{m}\, (\omega(x)\wedge N) \, \vert f_n(x)\vert\, \mathrm{d}x\le c_0.
	\end{align*}
	Thus, letting 
	first $n\to\infty$ using \eqref{eq1} and then $N\to \infty$ using Fatou's lemma, we find
	\begin{align*}
		(1-m)(m-\kappa) \int_0^\infty x^{m-2}\, \vert f(x)\vert \, \mathrm{d}x+ \int_0^\infty x^{m}\, \omega(x)\, \vert f(x)\vert\, \mathrm{d}x\le c_0.
	\end{align*}
	Consequently, $f\in X_{m-2}$ when $m<1$ and $\omega f\in X_{m}$.  
	
	\medskip
	
	\noindent\textbf{(ii)} Next, we show that $\Bigl((\omega\wedge n) f_n\Bigr)_{n\ge 1}$ converges to $\omega f$ in  $X_{m}$. Let $\vartheta\in C^\infty\bigl((0,\infty)\bigr)$ with $0\le \vartheta\le 1$ satisfy $\vartheta(x)=1$ for $x>2$ and $\vartheta(x)=0$ for $x\in (0,1)$. Set $\vartheta_R(x) := \vartheta(x/R)$ for $x>0$ and $R>1$. Then, for $\ve\in (0,1)$, integration by parts and the convexity of $\beta_\ve$ yield
	\begin{align*}
		\int_0^\infty& x^m\,\vartheta_R(x)\, \beta_\ve'(f_n(x))\, \left(-f_n''(x)-\frac{\kappa}{x} f_n'(x)\right)\, \mathrm{d}x \\
		&= \int_0^\infty \left[ (m-\kappa) x^{m-1}\,\vartheta_R(x)+x^m\,\vartheta_R'(x)\right]\, \big(\beta_\ve(f_n(x))\big)'\, \mathrm{d}x\\
		&\quad + \int_0^\infty x^m\,\vartheta_R(x)\, \beta_\ve''(f_n(x))\, \vert f_n'(x)\vert^2\, \mathrm{d}x\\
		&\ge -\int_0^\infty \left[(2m-\kappa)\, x^{m-1}\,\vartheta_R'(x)+x^m\,\vartheta_R''(x)\right]\, \beta_\ve(f_n(x))\, \mathrm{d}x\\
		&\quad + (m-\kappa)(1-m)\int_0^\infty x^{m-2}\,\vartheta_R(x)\, \beta_\ve(f_n(x))\, \mathrm{d}x,
	\end{align*}
	where the boundary terms vanish due to the definition of $\vartheta_R$ and Lemma~\ref{L1}. Thanks to the non-negativity of the last term and to
	\begin{align*}
		\left\vert(2m-\kappa)\, x^{m-1}\,\vartheta_R'(x)+x^m\,\vartheta_R''(x)\right\vert \le (2m-\kappa+1)\,\frac{x^m}{R^2}\,\|\vartheta''\|_\infty, 
	\end{align*}
	we deduce from~\eqref{beta} that
	\begin{equation*}\label{o1}
		\begin{split}
			\int_0^\infty x^m\,\vartheta_R(x)\, &\beta_\ve'(f_n(x))\, \left(-f_n''(x)-\frac{\kappa}{x} f_n'(x)\right)\, \mathrm{d}x \ge - (2m-\kappa+1)\,\frac{1}{R^2}\,\|\vartheta''\|_\infty \,\|f_n\|_{X_m}.
		\end{split}
	\end{equation*}
	Consequently, by the definition of $g_n$ we obtain
	\begin{align*}
		\int_0^\infty x^{m}\,&\vartheta_R(x)\, \beta_\ve'(f_n(x))\, \big(\omega(x)\wedge n\big)\, f_n(x)\, \mathrm{d}x\\
		&\le \int_0^\infty x^m\,\vartheta_R(x)\, \beta_\ve'(f_n(x))\, g_n(x)\, \mathrm{d}x + (2m-\kappa+1)\,\frac{1}{R^2}\,\|\vartheta''\|_\infty \,\|f_n\|_{X_m}\\
		&\le \int_R^\infty x^m\, \vert g_n(x)\vert \, \mathrm{d}x
		+ (2m-\kappa+1)\,\frac{1}{R^2}\,\|\vartheta''\|_\infty \,\|f_n\|_{X_m}.
	\end{align*}
	Letting $\ve\to 0$, Lebesgue's dominated convergence theorem entails
	\begin{align*}
		\int_0^\infty x^{m}\,&\vartheta_R(x)\,\vert f_n(x)\vert \, \big(\omega(x)\wedge n\big)\, \mathrm{d}x\\
		&\le \int_R^\infty x^m\, \vert g_n(x)\vert \, \mathrm{d}x
		+ (2m-\kappa+1)\,\frac{1}{R^2}\,\|\vartheta''\|_\infty \,\|f_n\|_{X_m},
	\end{align*}
	and therefore
	\begin{align*}
		\int_{2R}^\infty x^{m}\,  \big(\omega(x)\wedge n\big)\, \vert f_n(x)\vert \,\mathrm{d}x&\le \int_{0}^\infty x^{m}\,\vartheta_R(x)\, \vert f_n(x)\vert \, \big(\omega(x)\wedge n\big)\, \mathrm{d}x\\
		&\le \int_R^\infty x^m\, \vert g_n(x)\vert \, \mathrm{d}x
		+ (2m-\kappa+1)\,\frac{1}{R^2}\,\|\vartheta''\|_\infty \,\|f_n\|_{X_m}.
	\end{align*}
	Invoking~\eqref{eq1} we thus infer that
	\begin{align}\label{eq2}
		\lim_{R\to\infty}\, \sup_{n\in\mathbb{N}} \left( \int_{2R}^\infty x^{m}\,  \big(\omega(x)\wedge n\big)\,\vert f_n(x)\vert \, \mathrm{d}x\right)=0.
	\end{align}
	Now, given $R>1$ we find $N_R\ge 1$ such that $(\omega\wedge n)=\omega$ on $(0,2R)$ for $n\ge N_R$  thanks to~\eqref{a1}. Hence, for $n\ge N_R$,
	\begin{align*}
		\|(\omega\wedge n) f_n-\omega f\|_{X_m}&\le \int_0^{2R} x^{m}\, \omega(x)\,  \vert f_n(x)- f(x)\vert \, \mathrm{d}x +\int_{2R}^\infty x^{m}\, \big(\omega(x)\wedge n\big)\, \vert f_n(x)\vert \, \mathrm{d}x\\
		&\quad +\int_{2R}^\infty x^{m}\, \omega(x)\,  \vert f(x)\vert \, \mathrm{d}x .
	\end{align*}
	We then pass to the limit as $n\to\infty$ and infer from \eqref{eq1} that
	\begin{align*}
		\limsup_{n\to\infty} \|(\omega\wedge n) f_n - \omega f\|_{X_{m}} &\le \sup_{l\ge 1} \left\{ \int_{2R}^\infty x^m\ \big(\omega(x)\wedge l\big)\, |f_l(x)|\ \mathrm{d}x \right\}\\
		&\qquad + \int_{2R}^\infty x^m\, \omega (x) |f(x)|\ \mathrm{d}x.
	\end{align*}
	We finally let $R\to\infty$ and use \eqref{eq2} along with $\omega f\in X_m$ to obtain
	\begin{equation}
		\lim_{n\to\infty} \|(\omega\wedge n) f_n - \omega f\|_{X_{m}} = 0. \label{A.7}
	\end{equation}
	
	\medskip
	
	\noindent\textbf{(iii)} To conclude the proof, observe that $f_n\in \dom(A_{1,m})$ with $f_n\to f$ in $X_m$ and
    $$
	A_{1,m}f_n=-g_n+(\omega\wedge n) f_n \mathop{\longrightarrow}_{n\to\infty} -g+\omega f \quad \text{ in }\ X_m
	$$
	thanks to~\eqref{eq1} and~\eqref{A.7}. Since $A_{1,m}$ is a closed operator in $X_m$ due to \Cref{P1}, we conclude that $f\in \dom(A_{1,m})$ with $A_{1,m}^\omega f=-g$ as claimed.
\end{proof}

\begin{thm}\label{T4}
	Assume that $\omega$ satisfies~\eqref{a1} and let $\kappa<m\le 1$. Then  $A_{1,m}^\omega$, defined in \eqref{Al}, generates a positive analytic contraction semigroup  on $X_m$. 
\end{thm}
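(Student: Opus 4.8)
The plan is to reduce to bounded absorption rates, for which $A_{1,m}^{\omega}$ is a bounded perturbation of $A_{1,m}$ and hence easy to handle, and then to pass to a monotone limit; the delicate points will be keeping strong continuity and analyticity in the limit (which is where \cite{ArBa1993} enters) and pinning down the domain exactly (which is the purpose of \Cref{LL1}). \emph{Truncation.} For $n\ge 1$ I would set $\omega_n:=\omega\wedge n$, a bounded non-negative measurable function, so that $A_{1,m}^{\omega_n}:=A_{1,m}-\omega_n$, with $\dom(A_{1,m}^{\omega_n})=\dom(A_{1,m})$, is a bounded perturbation of $A_{1,m}$. By \Cref{T7} and the bounded perturbation theorem for analytic semigroups (\cite[Chapter~III]{EnNa2000}) it generates an analytic semigroup $T_n$ on $X_m$; writing $-\omega_n=-\|\omega_n\|_\infty+(\|\omega_n\|_\infty-\omega_n)$ exhibits it as a \emph{positive} bounded perturbation of the generator of the positive semigroup $e^{-\|\omega_n\|_\infty t}S_{1,m}(t)$, so $T_n$ is positive; and carrying out the computation in the proof of \Cref{L2} with the extra non-negative term $\int_0^\infty x^m\,\omega_n(x)\,|f(x)|\,\mathrm{d}x$ gives $\|(\lambda-A_{1,m}^{\omega_n})f\|_{X_m}\ge\lambda\|f\|_{X_m}$ for $\lambda>0$, hence $T_n$ is contractive by Lumer–Phillips. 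In particular the resolvents $R_n(\lambda):=(\lambda-A_{1,m}^{\omega_n})^{-1}$ are positive with $\|R_n(\lambda)\|\le 1/\lambda$ for $\lambda>0$ and $R_n(\lambda)=\int_0^\infty e^{-\lambda t}\,T_n(t)\,\mathrm{d}t$.

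\emph{Monotone limit.} Since $\omega_n\le\omega_{n+1}$, the Duhamel identity $T_n(t)-T_{n+1}(t)=\int_0^t T_{n+1}(t-s)(\omega_{n+1}-\omega_n)T_n(s)\,\mathrm{d}s$ and the resolvent identity $R_{n+1}(\lambda)-R_n(\lambda)=R_{n+1}(\lambda)(\omega_n-\omega_{n+1})R_n(\lambda)$ yield $0\le T_{n+1}(t)\le T_n(t)\le S_{1,m}(t)$ and $0\le R_{n+1}(\lambda)\le R_n(\lambda)\le(\lambda-A_{1,m})^{-1}$. As $X_m$ is an $L_1$-space, monotone/dominated convergence on the positive cone together with the lattice decomposition $f=f_+-f_-$ shows that $T(t):=\mathrm{s}\text{-}\lim_n T_n(t)$ and $R(\lambda):=\mathrm{s}\text{-}\lim_n R_n(\lambda)$ exist, that $T$ is a positive semigroup of contractions with Laplace transform $R(\lambda)=\int_0^\infty e^{-\lambda t}\,T(t)\,\mathrm{d}t$, and that $R$ is a positive pseudo-resolvent with $\|R(\lambda)\|\le 1/\lambda$. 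At this point I would invoke the absorption-semigroup results of \cite{ArBa1993}: because $S_{1,m}$ is a positive analytic $C_0$-semigroup of contractions on the $L_1$-space $X_m$ (\Cref{T7}) and $\omega$ satisfies \eqref{a1}, the limit $T$ is strongly continuous and analytic; write $G$ for its generator, so that $R(\lambda)=(\lambda-G)^{-1}$ for $\lambda>0$.

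\emph{Identification of $G$ and conclusion.} The \Cref{L2} computation extends once more, now with the term $\int_0^\infty x^m\,\omega(x)\,|f(x)|\,\mathrm{d}x$, to give $\|(\lambda-A_{1,m}^{\omega})f\|_{X_m}\ge\lambda\|f\|_{X_m}$ for $f\in\dom(A_{1,m}^{\omega})$ and $\lambda>0$ (all boundary terms still vanish since $\dom(A_{1,m}^{\omega})\subset\dom(A_{1,m})$), so $\lambda-A_{1,m}^{\omega}$ is injective. For the inclusion $G\subset A_{1,m}^{\omega}$, fix $f\in\dom(G)$, put $\varphi:=(\lambda-G)f$ and $f_n:=R_n(\lambda)\varphi\in\dom(A_{1,m})$; then $f_n\to R(\lambda)\varphi=f$ and $-A_{1,m}f_n+(\omega\wedge n)f_n=\varphi-\lambda f_n\to\varphi-\lambda f=-Gf$ in $X_m$, so \Cref{LL1} gives $f\in\dom(A_{1,m}^{\omega})$ with $A_{1,m}^{\omega}f=Gf$. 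Conversely, for $f\in\dom(A_{1,m}^{\omega})$ the element $h:=R(\lambda)(\lambda-A_{1,m}^{\omega})f\in\dom(G)\subset\dom(A_{1,m}^{\omega})$ satisfies $(\lambda-A_{1,m}^{\omega})h=(\lambda-G)h=(\lambda-A_{1,m}^{\omega})f$, whence $f=h\in\dom(G)$ by injectivity. Thus $G=A_{1,m}^{\omega}$, which therefore generates the positive analytic contraction semigroup $T$. The hard part will be the transfer of strong continuity and analyticity to the decreasing limit $T$: the angle of analyticity of $T_n$ may degenerate as $n\to\infty$, so a uniform resolvent estimate is not available and one genuinely needs the $L_1$-absorption machinery of \cite{ArBa1993}; the other point requiring care is that the domain does not enlarge in the limit, which is exactly \Cref{LL1} and ultimately rests on the coercive bound $\|(\lambda-A_{1,m})f\|_{X_m}\gtrsim\|f\|_{X_{m-2}}$ of \Cref{L2}.
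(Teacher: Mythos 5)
Your proposal follows essentially the same route as the paper: truncate to $\omega\wedge n$, pass to the decreasing monotone limit using the absorption-semigroup machinery of \cite{ArBa1993}, and identify the domain of the limit generator via \Cref{LL1}; the paper simply packages your explicit monotone-limit construction by citing \cite[Proposition~4.3]{ArBa1993} directly, derives contractivity from the domination $0\le e^{tA_{1,m}^\omega}\le e^{tA_{1,m}}$ rather than as a limit of contractions, and obtains analyticity from \cite[Theorem~6.1]{ArBa1993}. The one hypothesis you should not leave implicit when invoking \cite{ArBa1993} for the step you yourself flag as hard is the non-degeneracy condition ensuring that the decreasing limit is strongly continuous at $t=0$ rather than a degenerate semigroup: in the paper this is the verification that $\bigl(\dom(A_{1,m}^\omega)\bigr)^\perp=\{0\}$, which follows from $\mathcal{D}\bigl((0,\infty)\bigr)\subset\dom(A_{1,m}^\omega)$.
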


\begin{proof}
	Note that $X_{m}$ is a Banach lattice with order-continuous norm  (see \cite[Chapter~4]{AlBu2006}) and recall that the disjoint complement $F^\perp$ of a subset $F$ of  the vector lattice $X_{m}$ is defined as
	\begin{equation*}
		F^\perp := \big\{ g \in X_{m}\ \big |\ \min\{|f|,|g|\} = 0 \;\text{ for all }\; f\in F\big\}.
	\end{equation*}
	Since $\mathcal{D}\bigl((0,\infty)\bigr)$ is a subset of $\dom(A_{1,m}^\omega)$, we immediately see that $ \left( \dom(A_{1,m}^\omega) \right)^\perp=\{0\}$. Consequently, we are in a position to apply \cite[Proposition~4.3]{ArBa1993} and conclude that there is an extension $\widehat{A_{1,m}^\omega}$  of $A_{1,m}^\omega$ with domain
	\begin{multline*}
		D(\widehat{A_{1,m}^\omega}):= 
		\Big\{ f\in X_{m}\,\Big |\, \text{ there are }(f_n)_{n\in\mathbb{N}}\text{ in }\dom(A_{1,m})\text{ and }g\in X_{m} \text{ such that } \\
		\lim_{n\to\infty} \left(\|f_n-f\|_{X_{m}} + \|A_{1,m}f_n - (\omega\wedge n) f_n + g\|_{X_{m}} \right) = 0 \Big\} 
	\end{multline*}
	which generates a positive strongly continuous semigroup on $X_m$. \Cref{LL1} implies that
	$D(\widehat{A_{1,m}^\omega})=\dom(A_{1,m}^\omega)$ and therefore that $\widehat{A_{1,m}^\omega}=A_{1,m}^\omega$. Moreover, it holds that
	$$
        0\le e^{t A_{1,m}^\omega}=e^{t \widehat{A_{1,m}^\omega}}\le e^{t A_{1,m}},\quad t\geq 0,
    $$
 by \cite[p.~432]{ArBa1993}. Since $A_{1,m}$ generates a positive, analytic contraction semigroup, this ordering property, along with \cite[Remark~2.68]{BaAr2006}, implies that
	$$
	\| e^{t {A_{1,m}^\omega}}\|_{\mathcal{L}(X_m)}\le \| e^{t A_{1,m}}\|_{\mathcal{L}(X_{m})}\le 1,\quad t\ge 0.
	$$ 
	Hence $A_{1,m}^\omega=\widehat{A_{1,m}^\omega}$ generates a positive contraction semigroup on $X_m$ which is also analytic due to \cite[Theorem~6.1]{ArBa1993}.
\end{proof}

\begin{cor}\label{C4}
	Assume that $\omega$ satisfies~\eqref{a1} and let $(\alpha,a,b,n)\in\mathbb{R}^4$ satisfy, either the assumptions of \Cref{T5}~(c2), or those of \Cref{T6}~(c2). Then  $\mathbb{A}_n^\omega$, defined by
	\begin{equation*}
			\mathbb{A}_n^\omega f:=\mathbb{A}_n f-\omega f,\quad f\in \dom(\mathbb{A}_n^\omega),
	\end{equation*}
	with domain
	\begin{equation*}  
			\dom(\mathbb{A}_n^\omega):=\big\{f\in \dom(\mathbb{A}_n)\, \big |\,  \omega f\in X_n\big\},
	\end{equation*}
	generates a positive analytic contraction semigroup  on $X_n$. 
\end{cor}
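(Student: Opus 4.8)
The plan is to reduce \Cref{C4} to \Cref{T4} by means of the isometric conjugation of \Cref{L16}, the only genuine subtlety being the way the zeroth-order absorption term $f\mapsto\omega f$ behaves under that conjugation. In the singular case, i.e. under the assumptions of \Cref{T5}~(c2), I would set $l:=l_-$, $\kappa:=\kappa_-$, $m:=m_-(n)$ and $T:=\Tk_{-\alpha/2}\mathcal{M}_l$. By \Cref{L14} and the computations already carried out in the proof of \Cref{T5}, $T:X_m\to X_n$ is a positive isometric isomorphism with $\kappa<m<1$, and \eqref{k3} yields $\mathbb{A}_n=\frac{(2-\alpha)^2}{4}\,T\,A_{1,m}\,T^{-1}$ together with $\dom(\mathbb{A}_n)=T\big(\dom(A_{1,m})\big)$. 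The degenerate case, i.e. \Cref{T6}~(c2), is handled identically with $l:=l_+$, $\kappa:=\kappa_+$, $m:=m_+(n)$, keeping track of the sign of $2-\alpha$.

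The next step is to compute the conjugate of the operator of multiplication by $\omega$. Using the product rules $w\big(\mathcal{M}_l v\big)=\mathcal{M}_l(vw)$ and $w\big(\Tk_\beta v\big)=|1+\beta|\,\Tk_\beta\big(v\,\Tk_{-\beta/(1+\beta)}w\big)$ from \Cref{L14}, together with $\Tk_{-\alpha/2}^{-1}=\Tk_{\alpha/(2-\alpha)}$ (whose normalisation constant is reciprocal to that of $\Tk_{-\alpha/2}$, so that no spurious factor survives), a direct computation shows that conjugating multiplication by $\omega$ through $T$ produces multiplication by $x\mapsto\omega\big(x^{2/(2-\alpha)}\big)$. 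Setting $\widehat\omega(x):=\frac{4}{(2-\alpha)^2}\,\omega\big(x^{2/(2-\alpha)}\big)$ for $x>0$, which is again a non-negative measurable function on $(0,\infty)$ and therefore satisfies \eqref{a1}, one thus obtains $T^{-1}\mathbb{A}_n^\omega T=\frac{(2-\alpha)^2}{4}\,A_{1,m}^{\widehat\omega}$ and $\dom(\mathbb{A}_n^\omega)=T\big(\dom(A_{1,m}^{\widehat\omega})\big)$, where $A_{1,m}^{\widehat\omega}$ is the operator defined in \eqref{Al} with $\omega$ replaced by $\widehat\omega$.

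Since $\kappa<m<1$, \Cref{T4} applies and provides a positive analytic contraction semigroup $\big(e^{tA_{1,m}^{\widehat\omega}}\big)_{t\ge0}$ generated by $A_{1,m}^{\widehat\omega}$ on $X_m$. To conclude I would transport it back through $T$: the family $\big(T\,e^{\frac{(2-\alpha)^2}{4}t\,A_{1,m}^{\widehat\omega}}\,T^{-1}\big)_{t\ge0}$ is the semigroup generated by $\mathbb{A}_n^\omega$ on $X_n$; it is positive because $T$, $T^{-1}$, and the semigroup on $X_m$ are positive operators, it consists of contractions because $T$ and $T^{-1}$ are isometric isomorphisms and $\frac{(2-\alpha)^2}{4}>0$ so the time rescaling does not spoil the contraction bound, and it is analytic because both the isometric conjugation and a positive time rescaling preserve analyticity of a semigroup. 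I expect the only step requiring real care to be the second one --- correctly tracking the transformation of $\omega$ under $\Tk_{-\alpha/2}\mathcal{M}_l$ and verifying that $\widehat\omega$ still satisfies \eqref{a1} --- after which \Cref{T4}, combined with the similarity of $\mathbb{A}_n$ and $A_{1,m}$ already established in the proofs of \Cref{T5} and \Cref{T6}, does the rest.
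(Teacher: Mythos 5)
Your proposal is correct and follows essentially the same route as the paper: conjugate $\mathbb{A}_n^\omega$ by $\Tk_{-\alpha/2}\mathcal{M}_{l_\pm}$ to obtain $\frac{(2-\alpha)^2}{4}\big(\mathcal{G}_{\kappa_\pm}-\widetilde{\omega}\big)$ with $\widetilde{\omega}(x)=\frac{4}{(2-\alpha)^2}\,\omega\big(x^{2/(2-\alpha)}\big)$, note that $\widetilde{\omega}$ still satisfies~\eqref{a1}, and invoke \Cref{T4} together with the isometric transfer already used in the proofs of \Cref{T5} and \Cref{T6}. Your computation of the transformed absorption coefficient matches the paper's exactly.
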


\begin{proof}
	As in \Cref{L16}, we deduce from \Cref{L14} that
	\begin{equation*}
		\mathcal{A}_\alpha(a,b) - \omega = \frac{(2-\alpha)^2}{4}\, \big(\Tk_{-\alpha/2}\mathcal{M}_{l_\pm}\big)\, \big( \mathcal{G}_{\kappa_\pm} - \widetilde{\omega}\big)\,\big(\Tk_{-\alpha/2}\mathcal{M}_{l_\pm}\big)^{-1}
	\end{equation*}
	with
    \begin{equation*}
		\widetilde{\omega} (x) := \frac{4}{(2-\alpha)^2}\,\omega\left(x^{2/(2-\alpha)}\right)\,,\quad x>0\,.
	\end{equation*}
The statement now follows from \Cref{T4} as in the proof of \Cref{{T5}}.
\end{proof}

\section*{Acknowledgements}
For the purpose of Open Access, a CC-BY public copyright licence has been applied by the authors to the present document and will be applied to all subsequent versions up to the Author Accepted Manuscript arising from this submission.

\bibliographystyle{siam}
\bibliography{SingOps}

%
\appendix
%

\section{Estimates for Certain Integral Operators}

Let $s>0$ be fixed. Given $(\alpha, \beta)\in \R^2$, we define the integral operator 
$$
\big(Q_{s,\alpha,\beta}(t) f\big)(x):=\int_0^\infty q_{s,\alpha,\beta}(t,x,r) \, f(r)\,\rd r,\quad (t,x)\in (0,\infty)^2,
$$
whenever it is well-defined, where
\begin{equation}\label{k11x}
q_{s,\alpha,\beta}(t,x,r):=\frac{1}{\sqrt{t}} \left(\frac{x}{\sqrt{t}}\wedge 1\right)^{-\alpha}\left (\frac{r}{\sqrt{t}}\wedge 1\right)^{-\beta}\,\exp\left(-\frac{\vert x-r\vert^2}{s t}\right)
\end{equation}
for $(t,x,r)\in (0,\infty)^3$.

\begin{prop}\label{P11x}
Let $(\alpha, \beta)\in \R^2$, $m\in (-\infty,1]$, and $\theta\ge 0$ satisfy $\theta+\alpha<m+1\le 1-\beta$.  Then, there is $c>0$ depending on $s$, $\alpha$, $\beta$, $m$, and $\theta$ such that
\begin{equation*} 
\| Q_{s,\alpha,\beta}(t)\|_{\ml(X_m,X_{m-\theta})}\le c  t^{-\theta/2},\quad t>0.
\end{equation*}
\end{prop}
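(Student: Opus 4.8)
The plan is to prove the $\mathcal{L}(X_m, X_{m-\theta})$ bound for $Q_{s,\alpha,\beta}(t)$ by a direct estimate on the kernel $q_{s,\alpha,\beta}(t,x,r)$, reducing to the case $t=1$ by scaling and then splitting the $(x,r)$-domain according to where the minima $x/\sqrt{t}\wedge 1$ and $r/\sqrt{t}\wedge 1$ are attained.

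First I would exploit the scaling structure of $q_{s,\alpha,\beta}$. Writing $x = \sqrt{t}\,\xi$, $r = \sqrt{t}\,\rho$, one checks that $q_{s,\alpha,\beta}(t,x,r) = \tfrac{1}{\sqrt t}\, q_{s,\alpha,\beta}(1,\xi,\rho)$ with the natural change of variables, and that $\| f(\sqrt t\, \cdot)\|_{X_m} = t^{-(m+1)/2}\|f\|_{X_m}$. Combining these, the operator norm $\|Q_{s,\alpha,\beta}(t)\|_{\mathcal{L}(X_m,X_{m-\theta})}$ equals $t^{-\theta/2}$ times $\|Q_{s,\alpha,\beta}(1)\|_{\mathcal{L}(X_m,X_{m-\theta})}$, so it suffices to treat $t=1$ and produce a finite constant $c = c(s,\alpha,\beta,m,\theta)$.

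For $t=1$, by Fubini it is enough to bound $\sup_{r>0} r^{-m}\int_0^\infty q_{s,\alpha,\beta}(1,x,r)\, x^{m-\theta}\,\rd x$ (this is the standard criterion for an integral operator with nonnegative kernel to be bounded from $X_m = L_1((0,\infty),x^m\rd x)$ to $X_{m-\theta}$, since $\|Qf\|_{X_{m-\theta}} \le \int_0^\infty \big(\int_0^\infty q\, x^{m-\theta}\rd x\big)|f(r)|\,\rd r$ and one factors out $r^m$). Fix $r>0$ and split the $x$-integral into $\{x\le 1\}$ and $\{x\ge 1\}$, and within each, use $x/\sqrt{1}\wedge 1 = x\wedge 1$, $r\wedge 1$. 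On $x\le 1$ the factor is $x^{-\alpha}$, and the Gaussian $e^{-(x-r)^2/s}$ is harmless; the integral $\int_0^1 x^{m-\theta-\alpha}\,\rd x$ converges precisely because $m-\theta-\alpha > -1$, i.e. $\theta+\alpha < m+1$. On $x\ge 1$ the factor $(x\wedge 1)^{-\alpha}=1$ and one must control $\int_1^\infty x^{m-\theta} e^{-(x-r)^2/s}\,\rd x$ uniformly in $r$; since $m-\theta\le m\le 1$ one bounds $x^{m-\theta} \le C(1 + |x-r|^{m-\theta} + r^{m-\theta})$ and the Gaussian integrates the first two terms to a constant, leaving $r^{m-\theta}$. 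Finally one multiplies by $r^{-m}(r\wedge 1)^{-\beta}$ and checks the two regimes $r\le 1$ and $r\ge 1$: for $r\le 1$ the relevant power is $r^{-m-\beta}$ against the contribution $r\cdot$ something, and the condition $m+1\le 1-\beta$, i.e. $-m-\beta\ge -1 \cdot$... more precisely $m + \beta \le 0$ ensures boundedness near $r=0$; for $r\ge 1$ one uses $(r\wedge 1)^{-\beta}=1$ together with $r^{-m}\cdot r^{m-\theta} = r^{-\theta}\le 1$ and the fact that the $x\le 1$ contribution decays like a Gaussian in $r$. Assembling the bounds over the finitely many cases gives the desired uniform constant.

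The main obstacle I expect is the bookkeeping in the $x\ge 1$, $r\ge 1$ regime where both cutoffs are trivial and one has a genuine (non-degenerate, but heat-like) convolution kernel $e^{-|x-r|^2/s}$ weighted by polynomial factors $x^{m-\theta}r^{-m}$: one must verify that the positive part of the exponent $m-\theta$ (which can be as large as $1$) does not cause growth in $r$, and this is exactly where the hypothesis $\theta\ge 0$ together with $m\le 1$ is used, via $x^{m-\theta}\le x\le C(|x-r| + r)$ or the sharper elementary inequality $x^{\gamma}\le C_\gamma(|x-r|^\gamma + r^\gamma)$ for $\gamma\in(0,1]$. The endpoint cases $m=1$ and $\theta+\alpha = $ near $m+1$ require the strict inequality $\theta+\alpha<m+1$ to keep $\int_0^1 x^{m-\theta-\alpha}\,\rd x$ finite, while $m+1\le 1-\beta$ is allowed to be an equality, which is consistent with the boundary behaviour $r^{-m-\beta}$ being exactly integrable (or bounded) near $r=0$ — this asymmetry between the two hypotheses is worth highlighting in the write-up.
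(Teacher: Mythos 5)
Your overall strategy -- scaling out $t$, then a Schur-type bound on the $t=1$ kernel with a case split according to $x\le 1$ or $x\ge 1$ and $r\le 1$ or $r\ge 1$ -- is essentially the paper's proof (the paper performs the same substitution inline and splits the double integral into four regions $I$--$IV$). Your treatment of the scaling, of the region $x\le 1$ (where $\theta+\alpha<m+1$ enters), and of the regime $r\le 1$ (where $m+1\le 1-\beta$, i.e.\ $m+\beta\le 0$, enters) is correct. There is, however, a genuine gap in the region $x\ge 1$, $r\ge 1$ when $m<0$, which is allowed since $m\in(-\infty,1]$. Your proposed bound $x^{m-\theta}\le C\bigl(1+|x-r|^{m-\theta}+r^{m-\theta}\bigr)$ does not close the argument there for two reasons. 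First, when $m-\theta\le -1$ (possible, since $\alpha$ may be very negative and hence $\theta$ large) the term $|x-r|^{m-\theta}$ is not integrable against the Gaussian near $x=r$. Second, and more seriously, even discarding that term the additive constant $1$ survives the Gaussian integration, so after multiplying by $r^{-m}$ you are left with a contribution of order $r^{-m}$, which is unbounded as $r\to\infty$ precisely when $m<0$; your final assembly only accounts for the $r^{-m}\cdot r^{m-\theta}=r^{-\theta}$ piece. Your fallback inequalities $x^{m-\theta}\le x$ and $x^\gamma\le C_\gamma(|x-r|^\gamma+r^\gamma)$ for $\gamma\in(0,1]$ require the relevant exponent to be positive and so do not cover this case either.

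The repair is a multiplicative rather than additive comparison, and it is exactly what the paper does for its term $IV$: for $x\ge 1$, $r\ge 1$, and $m<0$, first use $\theta\ge 0$ to replace $x^{m-\theta}$ by $x^m$, and then
\begin{equation*}
\left(\frac{r}{x}\right)^{-m}=\left(1+\frac{r-x}{x}\right)^{-m}\le\bigl(1+|x-r|\bigr)^{-m}\le 2^{-m}\bigl(1+|x-r|^{-m}\bigr),
\end{equation*}
so that $x^m\le 2^{-m}r^m\bigl(1+|x-r|^{-m}\bigr)$. Since $-m\in(0,\infty)$ and in particular $-m>-1$, the function $y\mapsto\bigl(1+|y|^{-m}\bigr)e^{-y^2/s}$ is integrable on $\R$, whence $\int_1^\infty x^m e^{-(x-r)^2/s}\,\rd x\le C\,r^m$ and the product with $r^{-m}$ is bounded. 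With this single modification (and the additive bound reserved for $m\in[0,1]$), your proof is complete and coincides with the paper's.
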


\begin{proof}
We proceed along the lines of \cite[Appendix~C]{MNS_JDE22}. Let $f\in X_m$ and $t>0$. Then
\begin{align*}
\| Q_{s,\alpha,\beta}(t) f\|_{X_{m-\theta}}& \le \int_0^\infty \int_0^\infty   q_{s,\alpha,\beta}(t,x,r) \, \vert f(r)\vert \,\rd r\, x^{m-\theta}\,\rd x\\
&\le t^{\frac{m-\theta+1}{2}} \int_0^\infty\int_0^\infty  \left(x\wedge 1\right)^{-\alpha}\left(r\wedge 1\right)^{-\beta}\,\exp\left(-\frac{\vert x-r\vert^2}{s}\right)\, \left\vert f\left(\sqrt{t} r\right)\right\vert x^{m-\theta}\, \rd r\, \rd x\\
&\le t^{\frac{m-\theta+1}{2}} \int_0^1 x^{-\alpha+m-\theta}\,\rd x \int_0^1 r^{-\beta}\,   \left\vert f\left(\sqrt{t} r\right)\right\vert\, \rd r\\
&\quad +t^{\frac{m-\theta+1}{2}} \int_1^\infty\int_0^1  r^{-\beta} \,\exp\left(-\frac{\vert x-r\vert^2}{s}\right)\, \left\vert f\left(\sqrt{t} r\right)\right\vert x^{m-\theta}\, \rd r\, \rd x\\
&\quad+ t^{\frac{m-\theta+1}{2}} \int_0^1\int_1^\infty  x^{-\alpha+m-\theta}\,\exp\left(-\frac{\vert x-r\vert^2}{s}\right)\, \left\vert f\left(\sqrt{t} r\right)\right\vert \, \rd r\, \rd x\\
&\quad+ t^{\frac{m-\theta+1}{2}} \int_1^\infty\int_1^\infty  \exp\left(-\frac{\vert x-r\vert^2}{s}\right)\, \left\vert f\left(\sqrt{t} r\right)\right\vert x^{m-\theta}\, \rd r\, \rd x\\
&=:I+II+III+IV.
\end{align*}
In the following, the constants $c$ may depend on $s$, $\alpha$, $\beta$, $m$, and $\theta$, but are independent of $t>0$ and $f\in X_m$. Since $\theta+\alpha<m+1\le 1-\beta$ we have
\begin{align*}
I&\le t^{\frac{m-\theta+1}{2}}\, \int_0^1 x^{-\alpha+m-\theta}\,\rd x\, \int_0^1 r^m\,   \left\vert f\left(\sqrt{t} r\right)\right\vert\, \rd r \\
&\le  c\,t^{\frac{m-\theta+1}{2}}\, \| f(\sqrt{t}\,\cdot)\|_{X_m}= c\,t^{-\theta/2}\, \| f \|_{X_m}.
\end{align*}
Next, since $r^{-\beta}\le r^m$ and $(x-r)^2\ge x$ for $x\ge 3$ and $0\le r\le 1$, we have
\begin{align*}
II&\le t^{\frac{m-\theta+1}{2}}   \int_1^3 x^{m-\theta}\, \rd x \int_0^1  r^m\,  \left\vert f\left(\sqrt{t} r\right)\right\vert \, \rd r\\
&\qquad + t^{\frac{m-\theta+1}{2}} 
 \int_3^\infty x^{m-\theta}\, \exp\left(-\frac{x}{s}\right)\, \rd x \int_0^1  r^m\, \left\vert f\left(\sqrt{t} r\right)\right\vert\, \rd r  \\
&\le c \,t^{\frac{m-\theta+1}{2}}\, \| f(\sqrt{t}\,\cdot)\|_{X_m}=c\,t^{-\theta/2}\, \| f \|_{X_m}.
\end{align*}
Similarly, since $\theta+\alpha<m+1$, we derive
\begin{align*}
III&\le t^{\frac{m-\theta+1}{2}} \max\{1,3^{-m}\} \int_0^1 x^{-\alpha+m-\theta}\, \rd x \int_1^3 r^m\, \left\vert f\left(\sqrt{t} r\right)\right\vert \, \rd r\\
&\qquad + t^{\frac{m-\theta+1}{2}}\sup_{r\ge 3}\Big[r^{-m}\, \exp{\left( -\frac{r}{s} \right)}\Big]\int_0^1 x^{-\alpha+m-\theta}\, \rd x \int_3^\infty r^m\, \left\vert f\left(\sqrt{t} r\right)\right\vert \, \rd r\\
&\le c \,t^{\frac{m-\theta+1}{2}}\, \| f(\sqrt{t}\,\cdot)\|_{X_m}=c\,t^{-\theta/2}\, \| f \|_{X_m}.
\end{align*}
We finally deal with $IV$, where the non-negativity of $\theta$ ensures that
\begin{align*}
IV&\le t^{\frac{m-\theta+1}{2}} \int_1^\infty \left\vert f\left(\sqrt{t} r\right)\right\vert\, \int_{1-r}^\infty  \exp\left(-\frac{x^2}{s}\right)\, (x+r)^m\, \rd x\, \rd r.
\end{align*}
If $m\in [0,1]$, then 
\begin{align*}
\int_{1-r}^\infty  \exp\left(-\frac{x^2}{s}\right)\, (x+r)^m\, \rd x\le  \int_{-\infty}^\infty  \exp\left(-\frac{x^2}{s}\right)\, (\vert x\vert^m+r^m)\, \rd x\le c\, r^m
\end{align*}
for $r\ge 1$ so that
\begin{align*}
IV&\le c \,t^{\frac{m-\theta+1}{2}}\, \| f(\sqrt{t}\,\cdot)\|_{X_m}=c\,t^{-\theta/2}\, \| f \|_{X_m}.
\end{align*}
If $m< 0$, then, for $r\ge 1$ and $x\ge 1-r$,
$$
\left( \frac{r}{r+x} \right)^{-m} \le \left( \frac{r+x+|x|}{r+x} \right)^{-m} \le \big( 1 + |x| \big)^{-m} \le 2^{-m} \big( 1 + |x|^{-m} \big),
$$
from which we deduce that
\begin{align*}
\int_{1-r}^\infty  \exp\left(-\frac{x^2}{s}\right)\, (x+r)^m\, r^{-m}\, \rd x\le 
2^{-m} \int_{-\infty}^\infty  \exp\left(-\frac{x^2}{s}\right)\, \big(1+\vert x\vert^{-m}\big)\, \rd x \le c
\end{align*}
for $r\ge 1$.  We thus again find that
\begin{align*}
IV&\le c \,t^{\frac{m-\theta+1}{2}}\, \| f(\sqrt{t}\,\cdot)\|_{X_m}=c\,t^{-\theta/2}\, \| f \|_{X_m}.
\end{align*}
In summary, we have shown that
\begin{equation*}
\| Q_{s,\alpha,\beta}(t) f\|_{X_{m-\theta}}\le c\,t^{-\theta/2}\, \| f \|_{X_m}
\end{equation*} 
for $f\in X_m$ and  $t>0$, which establishes the claim.
\end{proof}

\section{Auxiliary Results}\label{S2}

The aim of this section is to describe the behavior of functions $f\in X_m$ with $\Gk f\in X_m$ for some $\kappa\in (-\infty,1\wedge m)$ as $x\to 0$ and as $x\to\infty$ .

\begin{lem}\label{L1}
	Given  $\kappa\in (-\infty,1)$ and $m> \kappa$, assume that $f\in X_m$ with $\Gk f\in X_m$. 
	Then
	\begin{equation}\label{f1x}
		\lim_{x\to \infty} x^{m-1}f(x)=0.
	\end{equation}
	Moreover,  $f'\in X_{m-1}$ with
	\begin{equation}\label{f2ax}
		f'(x)=-x^{-\kappa}\int_x^\infty z^\kappa\, \Gk f(z)\,\rd z,\quad x>0,
	\end{equation}
	and
	\begin{equation}\label{f2x}
		\lim_{x\to \infty} x^{m}f'(x) = \lim_{x\to 0} x^{m}f'(x) = 0.
	\end{equation}
	Finally, for $m\ge 1$ it holds that
	\begin{equation}\label{f3x}
		f(x)=\int_x^\infty z^{\kappa}\,\Gk f(z)\frac{z^{1-\kappa}-x^{1-\kappa}}{1-\kappa}\,\rd z,\quad x>0.
	\end{equation}
\end{lem}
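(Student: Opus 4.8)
The plan is to reduce everything to the first-order quantity $h(x):=x^\kappa f'(x)$, using the identity $\Gk f = x^{-\kappa}\big(x^\kappa f'\big)'$, so that $h'=x^\kappa\,\Gk f$ in $\mathcal{D}'\bigl((0,\infty)\bigr)$. First I would record that $f\in X_m$ and $\Gk f\in X_m$ force $f,\Gk f\in L_1\bigl((a,b)\bigr)$ for every $[a,b]\subset(0,\infty)$, hence $\big(x^\kappa f'\big)'=x^\kappa\,\Gk f\in L_{1,loc}((0,\infty))$ and $f\in W_{1,loc}^2((0,\infty))$; in particular $h$ is locally absolutely continuous with $h(x)-h(x_0)=\int_{x_0}^x z^\kappa\,\Gk f(z)\,\rd z$ for $0<x_0<x$. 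Two elementary facts will be used repeatedly: $z^\kappa\le z^m$ for $z\ge 1$ (since $\kappa<m$), so $z\mapsto z^\kappa\,\Gk f(z)$ is integrable near $+\infty$; and $\int_0^z x^{m-1-\kappa}\,\rd x=z^{m-\kappa}/(m-\kappa)$ is finite (since $m>\kappa$).

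Next I would establish \eqref{f2ax}. By the first fact, $L:=\lim_{x\to\infty}h(x)$ exists in $\R$. If $L\ne 0$, then $f'(x)=x^{-\kappa}h(x)$ keeps a fixed sign with $|f'(x)|\ge|L|x^{-\kappa}/2$ for $x$ large; integrating and using $\kappa<1$ gives $|f(x)|\ge c\,x^{1-\kappa}$ for large $x$, so $\int_1^\infty x^m|f(x)|\,\rd x=\infty$ because $m+1-\kappa>0$, contradicting $f\in X_m$. Hence $L=0$, which is precisely $x^\kappa f'(x)=-\int_x^\infty z^\kappa\,\Gk f(z)\,\rd z$. From this representation and Tonelli's theorem,
\begin{equation*}
	\int_0^\infty x^{m-1}|f'(x)|\,\rd x\le\int_0^\infty z^\kappa|\Gk f(z)|\int_0^z x^{m-1-\kappa}\,\rd x\,\rd z=\frac{\|\Gk f\|_{X_m}}{m-\kappa},
\end{equation*}
so $f'\in X_{m-1}$. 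The two limits in \eqref{f2x} then follow by splitting $\int_x^\infty z^\kappa|\Gk f(z)|\,\rd z$ at $z=1$: near $x=0$ one bounds $x^{m-\kappa}\int_x^1 z^\kappa|\Gk f(z)|\,\rd z\le\int_x^1 z^m|\Gk f(z)|\,\rd z\to 0$ and $x^{m-\kappa}\int_1^\infty z^\kappa|\Gk f(z)|\,\rd z\to 0$, while near $x=\infty$ one uses $x^{m-\kappa}z^\kappa\le z^m$ for $z\ge x$ to get $x^m|f'(x)|\le\int_x^\infty z^m|\Gk f(z)|\,\rd z\to 0$.

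For \eqref{f1x} I would observe that $x\mapsto x^{m-1}f(x)$ is locally absolutely continuous on $(0,\infty)$ with a.e.\ derivative $(m-1)x^{m-2}f(x)+x^{m-1}f'(x)$, which lies in $L_1\bigl((1,\infty)\bigr)$ since $x^{m-2}|f(x)|\le x^m|f(x)|$ there and $x^{m-1}|f'(x)|$ is integrable by the previous step; hence $\ell:=\lim_{x\to\infty}x^{m-1}f(x)$ exists, and $\ell\ne 0$ would force $x^m|f(x)|=x\,|x^{m-1}f(x)|\ge|\ell|x/2$ for large $x$, again contradicting $f\in X_m$, so $\ell=0$. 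Finally, for \eqref{f3x} with $m\ge 1$, \eqref{f1x} gives $f(\infty)=0$ (directly when $m=1$, and via $f(x)=x^{1-m}\cdot x^{m-1}f(x)$ when $m>1$), and since $f'\in X_{m-1}$ with $m\ge 1$ the function $f'$ is integrable near $+\infty$; thus $f(x)=-\int_x^\infty f'(y)\,\rd y=\int_x^\infty y^{-\kappa}\int_y^\infty z^\kappa\,\Gk f(z)\,\rd z\,\rd y$, and Tonelli's theorem — justified by $z^\kappa\bigl(z^{1-\kappa}-x^{1-\kappa}\bigr)\le z$ together with $z\le z^m$ for $z\ge 1$ and local integrability of $\Gk f$ — lets me swap the integrals to obtain $\int_x^\infty z^\kappa\,\Gk f(z)\,\tfrac{z^{1-\kappa}-x^{1-\kappa}}{1-\kappa}\,\rd z$. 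I expect the main obstacle to be not any single step but the careful bookkeeping of integrability at the two endpoints — the weight $z^\kappa$ being heavier than $z^m$ near $0$ yet lighter near $+\infty$ — together with justifying each Fubini/Tonelli interchange and the two contradiction arguments that pin the boundary limits to zero.
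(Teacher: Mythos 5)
Your argument is correct in its overall structure and takes a genuinely different, and in places cleaner, route than the paper. The paper splits into the cases $m\ge 1$ and $m<1$, introduces in each case an explicit auxiliary function $F$ solving $\big(x^\kappa F'\big)'=x^\kappa\,\Gk f$, writes $f=B+Ax^{1-\kappa}/(1-\kappa)+F$, and identifies $A=B=0$ through averaged integrals $\int_x^{x+1}y^{m-1}(\cdot)\,\rd y$; formula \eqref{f3x} then comes for free when $m\ge1$, and \eqref{f1x} is read off from the resulting representation of $f$. You instead derive \eqref{f2ax} first and uniformly in $m$ by passing to $h=x^\kappa f'$ and excluding $\lim_{x\to\infty}h(x)\ne0$ by the same contradiction-with-$f\in X_m$ device that the paper uses only in its case $m<1$; you then obtain \eqref{f1x} from the local absolute continuity of $x\mapsto x^{m-1}f(x)$ together with the integrability of its derivative on $(1,\infty)$ (essentially the mechanism of the paper's \Cref{C1}, used there only after the lemma), and \eqref{f3x} by integrating \eqref{f2ax} against $f(\infty)=0$ via Tonelli. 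This buys a case-free proof and dispenses with the averaging arguments; the price is that the order of the claims must be permuted ($f'\in X_{m-1}$ before \eqref{f1x}), which you do consistently.

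There is, however, one step that fails as written: in the proof of $\lim_{x\to0}x^mf'(x)=0$ you bound $x^{m-\kappa}\int_x^1 z^\kappa|\Gk f(z)|\,\rd z\le\int_x^1 z^m|\Gk f(z)|\,\rd z$ and assert that the right-hand side tends to $0$ as $x\to0$. It does not: it increases to $\int_0^1 z^m|\Gk f(z)|\,\rd z$, which is finite but in general nonzero. The conclusion is nevertheless true and the repair is one line: for fixed $z\in(0,1)$ one has $x^{m-\kappa}z^\kappa\chi_{(x,1)}(z)|\Gk f(z)|\to0$ as $x\to0$ with the $x$-independent majorant $z^m|\Gk f(z)|\in L_1\bigl((0,1)\bigr)$, so Lebesgue's dominated convergence theorem gives $x^{m-\kappa}\int_x^1 z^\kappa|\Gk f(z)|\,\rd z\to0$; alternatively, split the integral at a small $\delta$ as the paper does, take $\limsup_{x\to 0}$, and then let $\delta\to0$. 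With this correction the proposal is a complete proof.
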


\begin{proof}
	\noindent {\bf (i)} First consider the case when $m\ge 1$, so that $\kappa <1\le m$, and let
	\begin{equation}\label{f3xx}
		F(x):=\int_x^\infty z^{\kappa}\,\Gk f(z)\frac{z^{1-\kappa}-x^{1-\kappa}}{1-\kappa}\,\rd z,\quad x>0,
	\end{equation}
	which is well-defined since $\Gk f\in X_m$ and since
	$$
	\left\vert z^{\kappa}\,\Gk f(z)\frac{z^{1-\kappa}-x^{1-\kappa}}{1-\kappa}\right\vert \le \frac{z}{1-\kappa}\,\vert\Gk f(z)\vert\le \frac{x^{1-m}}{1-\kappa}\,z^m\,\vert\Gk f(z)\vert,
	$$
 for $z>x>0$. Moreover, one has that 
	$$
	x^{m-1}\,F(x)\le \frac{1}{1-\kappa}\int_x^\infty z^m\, \vert\Gk f(z)\vert\, \rd z ,\quad x>0,
	$$
	from which one infers that 
	\begin{equation}\label{f1xa}
		\lim_{x\to \infty} x^{m-1}F(x)=0.
	\end{equation}
	Next observe that $F\in W_{1,loc}^1((0,\infty))$ and that
	\begin{equation}\label{f'id}
		F'(x)=-x^{-\kappa}\int_x^\infty z^\kappa\, \Gk f(z) \, \rd z,\quad x>0.
	\end{equation}
	In particular, $[x\mapsto x^\kappa F'(x)]\in W_{1,loc}^1((0,\infty))$ satisfies
	$$
	\big(x^\kappa F'(x)\big)'=x^\kappa  \Gk f(x)=\big(x^\kappa f'(x)\big)',\quad x>0.
	$$
	Integrating this identity yields
	$$
	f(x)=B+A\frac{x^{1-\kappa}}{1-\kappa} + F(x),\quad x>0,
	$$
	for some $(A,B)\in \R^2$. Since 
	$$
	\int_x^{x+1} y^{m-1} \vert f(y)\vert \,\rd y  \le\frac{1}{x} \int_x^{x+1} y^{m} \vert f(y)\vert\,\rd y
	$$
	and
	\begin{align*}
	\int_x^{x+1} y^{m-1} \vert F(y)\vert \,\rd y  & \le  \frac{1}{1-\kappa}\int_x^{x+1}\int_y^\infty z^m\, \vert \Gk f(z)\vert \,\rd z\,\rd y \\
    & \le \frac{1}{1-\kappa} \int_x^{\infty} z^{m} \vert \Gk f(z)\vert\,\rd z,
	\end{align*}
	we deduce from $f\in X_m$ and $\Gk f\in X_m$ that
	$$
	\lim_{x\to\infty}  \int_x^{x+1} y^{m-1} \vert f(y)-F(y)\vert\,\rd y=0
	$$
	and, hence, that
	$$
	\lim_{x\to\infty}  \int_x^{x+1} y^{m-1}\left( B+A\frac{y^{1-\kappa}}{1-\kappa} \right) \,\rd y=0.
	$$
	However,
	\begin{align*}
		\int_x^{x+1} y^{m-1}\left(B+A\frac{y^{1-\kappa}}{1-\kappa} \right) \,\rd y&=\frac{B}{m}\left[(x+1)^m-x^m\right]+\frac{A \left[(x+1)^{m+1-\kappa}-x^{m+1-\kappa}\right]}{(1-\kappa)(m+1-\kappa)}\\
		&\sim \frac{A}{1-\kappa} x^{m-\kappa}\ \text{ as } x\to \infty,
	\end{align*}
	which implies $A=0$ since $m>\kappa$. Then
	\begin{align*}
		\int_x^{x+1} y^{m-1}\left(B+A\frac{y^{1-\kappa}}{1-\kappa} \right) \,\rd y&=\frac{B}{m}\left[(x+1)^m-x^m\right] \sim B x^{m-1} \text{ as }x\to \infty,
	\end{align*}
	and, hence, also $B=0$, since $m\ge 1$. Consequently, $f=F$ and, thanks to~\eqref{f3xx}, \eqref{f1xa} and~\eqref{f'id}, we see that \eqref{f3x} holds, along with \eqref{f1x} and~\eqref{f2ax}, for $m\ge 1$.\\[.15cm]
	\noindent {\bf (ii)} Now consider the case when $m<1$, so that $\kappa <m<1$. Define
	\begin{equation}\label{f17}
		F(x):=\int_1^x z^{\kappa}\,\Gk f(z)\,\frac{x^{1-\kappa}-z^{1-\kappa}}{1-\kappa}\,\rd z,\quad x>0,
	\end{equation}
	which is meaningful since
	$$
	\left\vert z^\kappa \Gk f(z)\frac{x^{1-\kappa}-z^{1-\kappa}}{1-\kappa}\right\vert\le  \frac{x^{1-\kappa}}{1-\kappa}\, \vert z^m \Gk f(z)\vert,\quad x> z> 1,
	$$
	and
	$$
	\left\vert z^\kappa \Gk f(z)\frac{x^{1-\kappa}-z^{1-\kappa}}{1-\kappa}\right\vert\le  \frac{1}{1-\kappa}\, \vert z^m \Gk f(z)\vert,\quad x< z< 1.
	$$
	As in the previous case, $F\in W_{1,loc}^1((0,\infty))$ satisfies
	$$
	F'(x) = x^{-\kappa} \int_1^x z^\kappa \mathcal{G}_\kappa f(z)\, \rd z, \quad x>0,
	$$
	and $[x\mapsto x^\kappa F'(x)]\in W_{1,loc}^1((0,\infty))$ with
	$$
	\big(x^\kappa\, F'(x)\big)'=x^\kappa \,\Gk f(x)=\big(x^\kappa f'(x)\big)',\quad x>0,
	$$
	so that there are $(A,B)\in \R^2$ such that
	\begin{equation}\label{iu1}
		f(x)=B+A\frac{x^{1-\kappa}}{1-\kappa} + F(x),\quad x>0.
	\end{equation}
	Then
	\begin{equation}\label{iu}
		f'(x)=x^{-\kappa}\left(A+\int_1^x z^\kappa\Gk f(z)\,\rd z\right),\quad x>0,
	\end{equation}
	and consequently
	$$
	\lim_{x\to\infty} x^\kappa f'(x)= A+\int_1^\infty z^\kappa\,\Gk f(z)\,\rd z=:L <\infty
	$$
	since
	$[z\mapsto z^\kappa\Gk f(z)]\in L_1\bigl((1,\infty)\bigr)$ thanks to $m>\kappa$. Towards a contradiction, assume that  $L\not=0$. An integration shows that 
	$$
	x^m f(x)\sim \frac{L}{1-\kappa}x^{m+1-\kappa} \text{ as }x\to \infty,
	$$
	contradicting the integrability of $f\in X_m$. Thus, $L=0$ and
	\begin{equation}\label{i18}
		A=-\int_1^\infty z^\kappa\Gk f(z)\,\rd z,
	\end{equation}
	so that~\eqref{iu} implies~\eqref{f2ax}. Let us now establish~\eqref{f1x}. Using~\eqref{f17}, \eqref{iu1}, and~\eqref{i18}, it follows that
	\begin{equation}\label{iu3}
		f(x)=B-\frac{x^{1-\kappa}}{1-\kappa}\int_x^\infty z^\kappa\,\Gk f(z)\,\rd z - \frac{1}{1-\kappa}\int_1^x z\,\Gk f(z)\,\rd z,\quad x>0.
	\end{equation}
	Notice that
	\begin{align*}
		x^{m-1}\int_1^x z\,\vert\Gk f(z)\vert\, \rd z&= x^{m-1} \int_1^R z^{1-m}\, z^m\, \vert\Gk f(z)\vert\,\rd z + x^{m-1} \int_R^x z^{1-m}\, z^m\, \vert\Gk f(z)\vert\,\rd z  \\
		&\le x^{m-1}\, R^{1-m}\int_1^R  z^{m}\,  \vert\Gk f(z)\vert\,\rd z + \int_R^x  z^m\, \vert\Gk f(z)\vert\,\rd z,
	\end{align*}
  for $1<R<x$. Since $m<1$ and $\Gk f\in X_m$, we see that
	\begin{align*}
		\limsup_{x\to \infty} x^{m-1}\int_1^x z\,\vert\Gk f(z)\vert\, \rd z\le \int_R^\infty  z^m\, \vert\Gk f(z)\vert\,\rd z.
	\end{align*}
	Hence, letting $R\to\infty$, it follows that
	\begin{align}\label{u13}
		\lim_{x\to \infty} x^{m-1}\int_1^x z\,\vert\Gk f(z)\vert\, \rd z=0.
	\end{align}
	Since $\kappa<m<1$ and
	$$
	\left\vert x^{m-1} x^{1-\kappa}\int_x^\infty z^\kappa\,\Gk f(z)\,\rd z\right\vert \le  \int_x^\infty z^m\,\vert \Gk f(z)\vert\,\rd z \longrightarrow 0\text{ as }x\to\infty,
	$$
	we deduce~\eqref{f1x} from \eqref{iu3} and \eqref{u13}. We have thus established~\eqref{f1x} and~\eqref{f2ax} for $m<1$ as well.\\[.15cm]
	\textbf{(iii)} It remains to prove~\eqref{f2x} and that $f'\in X_{m-1}$. The latter is ensured by
	\begin{align*}
		\int_0^\infty x^{m-1}\, \vert f'(x)\vert\,\rd x\le \int_0^\infty x^{m-1-\kappa}\int_x^\infty z^\kappa\, \vert \Gk f(z)\vert\,\rd z\, \rd x=\frac{1}{m-\kappa}\int_0^\infty z^m\,\vert\Gk f(z)\vert\,\rd z ,
	\end{align*}
	which follows from~\eqref{f2ax}. Using once more~\eqref{f2ax} and $m>\kappa$, we find
	\begin{align*}
		x^m |f'(x)| \le x^{m-\kappa} \int_x^\infty z^\kappa\, \vert \Gk f(z)\vert\,\rd z \le \int_x^\infty z^m\, \vert \Gk f(z)\vert\,\rd z,
	\end{align*}
	from which we deduce that
	\begin{equation*}
		\lim_{x\to\infty} x^m f'(x) = 0
	\end{equation*}
	since $\Gk f\in X_m$. Finally, let $\delta\in (0,1)$ and consider $x\in (0,\delta)$. Then, again using~\eqref{f2ax} and the fact that $m>\kappa$, we see that
	\begin{align*}
		x^m\,\vert f'(x)\vert&\le x^{m-\kappa} \int_x^\delta z^{\kappa-m}\, z^m\, \vert\Gk f(z)\vert\,\rd z + x^{m-\kappa} \int_\delta^\infty z^{\kappa-m}\, z^m\, \vert\Gk f(z)\vert\,\rd z \\
		&\le \int_x^\delta  z^{m}\,  \vert\Gk f(z)\vert\,\rd z +x^{m-\kappa}\,\delta^{\kappa-m} \int_\delta^\infty  z^m\, \vert\Gk f(z)\vert\,\rd z ,
	\end{align*}
	so that  $\Gk f\in X_m$ implies that
	\begin{align*}
		\limsup_{x\to 0}x^m\,\vert f'(x)\vert&\le  \int_0^\delta  z^{m}\,  \vert\Gk f(z)\vert\,\rd z.
	\end{align*}
	Letting $\delta\to 0$ completes the proof of~\eqref{f2x} and thus of \Cref{L1}.
\end{proof}

In order to account for the  different behavior of the cases when $m<1$ and when $m=1$ we define
$$
\widehat{X}_m:=\left\{\begin{array}{ll} \Y_{[m-2,m]},& m<1,\\
	X_1, &m=1.\end{array}
\right.
$$

\begin{cor}\label{C1}
	Let $\kappa < m\le 1$. If $f\in \widehat{X}_m$ and $\Gk f\in X_m$, then 
	\begin{equation}\label{f5}
		x^{m-1}f(x)=-\int_x^\infty\big(z^{m-1}f(z)\big)'\,\rd z,\quad x>0,
	\end{equation}
	and $\lim_{x\to 0} x^{m-1}f(x)$ exists.
\end{cor}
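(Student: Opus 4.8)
The plan is to work directly with $g(x):=x^{m-1}f(x)$ and to show that $g'$ is integrable over the whole half-line; this immediately yields the existence of the one-sided limits of $g$ at $0$ and at $\infty$ and, combined with the decay of $g$ at infinity already available from \Cref{L1}, the representation formula~\eqref{f5}.

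First I would check that we are in the scope of \Cref{L1}: since $\widehat{X}_m\subset X_m$ (whether $m<1$ or $m=1$), $\kappa<1$, and $m>\kappa$, the hypotheses $f\in\widehat{X}_m$ and $\Gk f\in X_m$ give $f\in W_{1,loc}^1\bigl((0,\infty)\bigr)$, $f'\in X_{m-1}$, and $\lim_{x\to\infty}x^{m-1}f(x)=0$. Consequently $g\in W_{1,loc}^1\bigl((0,\infty)\bigr)$ with
\begin{equation*}
g'(x)=(m-1)\,x^{m-2}f(x)+x^{m-1}f'(x),\quad x>0.
\end{equation*}
The key step is then to verify that $g'\in L_1\bigl((0,\infty)\bigr)$. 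If $m=1$ this is immediate, since $g'=f'\in X_0=L_1\bigl((0,\infty)\bigr)$. If $m<1$, the first term on the right-hand side is integrable because $f\in\widehat{X}_m\subset X_{m-2}$, while the second term is integrable because $f'\in X_{m-1}$ by \Cref{L1}; in both cases $\|g'\|_{L_1((0,\infty))}$ is controlled by $\|f\|_{X_{m-2}}+\|f'\|_{X_{m-1}}$ (with the first summand absent when $m=1$).

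Finally I would use that a locally absolutely continuous function on $(0,\infty)$ with integrable derivative admits finite limits at both endpoints: from $g(y)-g(x)=\int_x^y g'(z)\,\rd z$ for $0<x<y$ one obtains that $\lim_{x\to 0^+}g(x)$ exists, which is precisely the asserted existence of $\lim_{x\to 0}x^{m-1}f(x)$, and that $\lim_{y\to\infty}g(y)$ exists; since this last limit equals $0$ by \Cref{L1}, letting $y\to\infty$ in the identity yields $g(x)=-\int_x^\infty g'(z)\,\rd z$, i.e.~\eqref{f5}. I do not expect a genuine obstacle here; the only point requiring a little attention is the case $m<1$, where the integrability of $g'$ rests on combining the membership $f\in X_{m-2}$ built into the definition of $\widehat{X}_m$ with the extra regularity $f'\in X_{m-1}$ supplied by \Cref{L1}, the case $m=1$ being a direct consequence of $f'\in L_1\bigl((0,\infty)\bigr)$.
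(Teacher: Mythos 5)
Your argument is correct and coincides with the paper's own proof: both apply \Cref{L1} to get $f'\in X_{m-1}$ and $\lim_{x\to\infty}x^{m-1}f(x)=0$, expand $\big(x^{m-1}f(x)\big)'$ by the product rule, and deduce its integrability from $f'\in X_{m-1}$ together with $f\in X_{m-2}$ when $m<1$ (the latter term being absent when $m=1$), after which \eqref{f5} and the existence of the limit at $0$ follow. Your write-up merely makes explicit the case distinction $m<1$ versus $m=1$ that the paper leaves implicit in the definition of $\widehat{X}_m$.
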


\begin{proof}
	If $f\in \widehat{X}_m$ satisfies $\Gk f\in X_m$, then $f'\in X_{m-1}$ by \Cref{L1} and 
	$$
	\big(x^{m-1} f(x)\big)' = x^{m-1} f'(x) + (m-1) x^{m-2} f(x),\quad x>0,
	$$ 
	so that 
	$$
	[x\mapsto (x^{m-1}f(x))']\in L_1\bigl((0,\infty)\bigr).
	$$
	Hence, using~\eqref{f1x}, we deduce that
	\begin{equation*}
		x^{m-1}f(x)=-\int_x^\infty\big(z^{m-1}f(z)\big)'\,\rd z,\quad x>0,
	\end{equation*}
	and that $\lim_{x\to 0} x^{m-1}f(x)$ exists. 
\end{proof}

\end{document}